\documentclass[11pt]{article}
\textwidth 16cm
\textheight 22cm
\topmargin -5mm
\oddsidemargin 1mm

\usepackage{microtype} 
\usepackage{url}
\usepackage{hyperref}
\hypersetup{colorlinks, linkcolor=blue}
\usepackage{amsfonts}
\usepackage{amsthm}
\usepackage{amsmath}
\usepackage{amscd}
\usepackage{amssymb}
\usepackage{caption,subcaption}
\usepackage[shortlabels]{enumitem}
\usepackage{mathtools}
\usepackage{tikz}
\usepackage{tikz-cd}
\usetikzlibrary{arrows,arrows.meta}
\tikzset{>=latex}
\tikzcdset{arrow style=tikz, diagrams={>=latex}}
\usepackage{xcolor}

\def\AA {{\mathbb A}}     
\def\CC {{\mathbb C}}     
\def\LL {{\mathbb L}}     
\def\PP {{\mathbb P}}     
\def\QQ {{\mathbb Q}}     
\def\RR {{\mathbb R}}     
\def\XX {{\mathbb X}}     
\def\ZZ {{\mathbb Z}}     

\def\t  {\mathfrak{tr}}   

\def\ring#1{\ifmmode \mathaccent'027 #1\else \rm\accent'027 #1\fi}

\newcommand{\asm}{{\mathfrak m}} 	

\def\ol  {\overline}

\def\mc {\mathcal}
\def\mk {\mathfrak}

\def\Ob {\mathrm{Ob}}
\def\Hom {\mathrm{Hom}}

\def\Ext {\mathrm{Ext}}

\mathchardef\mhyphen="2D

\newtheorem{theorem}{Theorem}[section]

\newtheorem{lemma}[theorem]{Lemma}
\newtheorem{prop}[theorem]{Proposition}
\newtheorem{coro}[theorem]{Corollary}
\theoremstyle{definition}
\newtheorem{remark}[theorem]{Remark}
\newtheorem{df}[theorem]{Definition}
\newtheorem{ex}[theorem]{Example}
\theoremstyle{plain}

\makeatletter
\def\smallunderbrace#1{\mathop{\vtop{\m@th\ialign{##\crcr
   $\hfil\displaystyle{#1}\hfil$\crcr
   \noalign{\kern3\p@\nointerlineskip}%
   \tiny\upbracefill\crcr\noalign{\kern3\p@}}}}\limits}
\makeatother

\numberwithin{equation}{section}



\newcommand{\Addresses}{{
  \bigskip
  \footnotesize

	(F.~Haiden) \textsc{Syddansk Universitet, Campusvej 55, 5230 Odense, Denmark} \par\nopagebreak
	\textit{E-mail:} \texttt{fab@sdu.dk}
	\medskip
	
}}



\begin{document}

\title{3-d Calabi--Yau categories for Teichm\"uller theory}
\author{Fabian Haiden}
\date{}

\maketitle

\begin{abstract}
For $g,n\geq 0$ a 3-dimensional Calabi-Yau $A_\infty$-category $\mathcal C_{g,n}$ is constructed such that a component of the space of Bridgeland stability conditions, $\mathrm{Stab}(\mathcal C_{g,n})$, is a moduli space of quadratic differentials on a genus $g$ surface with simple zeros and $n$ simple poles.
For a generic point in the moduli space the corresponding quantum/refined Donaldson--Thomas invariants are computed in terms of counts of finite-length geodesics on the flat surface determined by the quadratic differential.
As a consequence, these counts satisfy wall-crossing formulas.
\end{abstract}

\setcounter{tocdepth}{2}
\tableofcontents

\section{Introduction}

\subsection{Stability conditions on 3CY categories}

The Fukaya category, $\mc F(M)$, is a rich invariant of the symplectic topology of a compact Calabi--Yau manifold, $M$, but is independent of its complex structure. 
In an intricate conjectural picture developed by Thomas~\cite{thomas01}, Thomas--Yau~\cite{thomas_yau02}, and in particular Joyce~\cite{joyce_conj} the complex structure on $M$, together with a holomorphic volume form $\Omega^{n,0}$, gives rise to a Bridgeland stability condition on $\mc F(M)$ whose central charge records the periods of $\Omega^{n,0}$ and whose semistable objects are, roughly speaking, the special Lagrangian submanifolds.
Variants of this conjecture for certain non-compact Calabi--Yau spaces have been proposed in~\cite[Section 7.3]{ks_wcs}. 
This includes, in particular, the 3-folds considered in~\cite{dddhp} in the context of large $N$ duality.
These have the structure of affine quadric bundles, $X$, over a Riemann surface $C$ with nodal singularities over finitely many points in $C$.
The holomorphic volume form $\Omega^{3,0}$ on $X$ then gives rise to, by fiberwise integration, a holomorphic quadratic differential $\varphi$ (section of the square of the canonical bundle) on $C$ with simple zeros.
Conversely, much of the geometry of $X$ can be recovered from the pair $(C,\varphi)$.
This suggests that $\mc F(X)$, together with its stability condition, can be constructed starting from $(C,\varphi)$ instead of $X$.
This is essentially the strategy of this paper, except that we do not prove here that the 3-d Calabi--Yau (3CY) categories which we construct are equivalent to Fukaya categories of 3-folds, though we expect this to be true.
Our first main result is the following (Theorem~\ref{thm_stab_C} in the main text).

\begin{theorem}
\label{thm1_intro}
Fix $g,n\geq 0$, where $n\geq 4$ if $g=0$ and $n\geq 2$ if $g=1$, and let $\mc Q_{g,n}$ be the moduli space of Riemann surfaces of genus $g$ together with a quadratic differential with simple zeros and $n$ simple poles and a marking (see below). 
Then there is a 3-d Calabi--Yau category $\mc C_{g,n}$ such that a component of the space of stability conditions, $\mathrm{Stab}(\mc C_{g,n})$, is identified with $\mc Q_{g,n}$.
\end{theorem}

The spaces $\mc Q_{g,n}$ which appear in the above theorem are special cases of the spaces of quadratic differentials considered in~\cite{hkk} and are defined as follows.
Choose a surface $S$ of genus $g$, a set $M\subset S$ of $4g-4+2n$ marked points, and a foliation $\nu\in\Gamma(S\setminus M;\PP(TS))$ which is homotopic to the horizontal foliation $\mathrm{hor}(\varphi):=\{v\mid \varphi(v,v)\in\RR_{\geq 0}\}$ of a (hence any) quadratic differential with $4g-4+n$ simple zeros and $n$ simple poles at $M$.
An element of $\mc Q_{g,n}$ is represented by a triple $(J,\varphi,h)$ where $J$ is a complex structure on $S$, $\varphi$ is a quadratic differential with $\mathrm{Zeros}(\varphi)\cup\mathrm{Poles}(\varphi)=M$ and $h$ is a homotopy (path in the space of sections) from $\mathrm{hor}(\varphi)$ to $\nu$.
Two triples $(J_i,\varphi_i,h_i)$, $i=1,2$ are equivalent if there is an orientation preserving diffeomorphism $f:S\to S$ with $f_*J_1=J_2$, $f_*\varphi_1=\varphi_2$ and such that if $\tilde{f}$ is the homotopy from $f^*\nu$ to $\nu$ which is the concatenation of the inverse of $f^*h_2$ and $h_1$, then the pair $(f,\tilde f)$ is isotopic to $(\mathrm{id},\mathrm{id})$.

A result similar to Theorem~\ref{thm1_intro}, but with $\mc C_{g,n}$ replaced by the wrapped Fukaya category of the punctured surface, $\mc F(S\setminus M,\nu)$, (and also for more general types of quadratic differentials), is proven in~\cite{hkk}. 
Our proof of Theorem~\ref{thm1_intro} combines this result with a method of transferring stability conditions from $\mc F(S\setminus M,\nu)$ to the 3CY category $\mc C_{g,n}$.

\subsubsection{Construction of the 3CY categories $\mc C_{g,n}$.}
We give a brief sketch. 
Fix the surface $S$, marked points $M\subset S$, and the foliation $\nu$ as before.
The $A_\infty$ category $\mc F(S\setminus M,\nu)$ is defined as in~\cite{hkk} and admits an additional $\ZZ/2$-grading by equipping curves $c:I\to S$, which are objects in the category, with a choice of orientation of $\nu$ (= choice of $\sqrt{\varphi}$ if $\nu=\mathrm{hor}(\varphi)$) along $c$.
The $\ZZ\times \ZZ/2$-graded version of $\mc F(S\setminus M,\nu)$ admits a deformation, $\mc A(S,\nu)$, over $\mathbf k[[t]]$, the $\ZZ\times\ZZ/2$-graded $\mathbf k$-algebra, commutative in the $\ZZ\times\ZZ/2$-graded sense, freely generated by a variable $t$ of degree $(-1,\mathrm{odd})$.
This deformation is defined geometrically by counting immersed disks in $M$ with coefficient $t^k$, where $k$ is the number of times the disk covers a point in $M$.
The category $\mc C_{g,n}$ is then obtained as torsion modules over the category $\mc A(S,\nu)$, i.e. those supported on the ``central fiber'' $\mc F(S\setminus M,\nu)$.
By construction, there is a base-change adjunction between the wrapped Fukaya category and $\mc C_{g,n}$, and we use it to transfer stability conditions from the former to the latter.

\textit{To our knowledge, Theorem~\ref{thm1_intro} presents the first instance where a component of the space of stability conditions is determined for a 3CY category not coming from a quiver with potential.}

\subsubsection{Relation to work of Bridgeland--Smith.}
Our result complements celebrated work of Bridgeland--Smith~\cite{bs} who prove a result similar to Theorem~\ref{thm1_intro}, but for moduli spaces of quadratic differentials with at least one higher order pole (although quadratic differentials without higher order poles are included in the moduli space as non-generic strata). 
The 3CY categories in~\cite{bs} are constructed from certain quivers with potential corresponding to ideal triangulations of the surface. 
\textit{In contrast, our categories $\mc C_{g,n}$ do not come from quivers with potential, and therefor require completely different methods.}
We also note that quadratic differentials without higher order poles, the case considered here, are more interesting from the point of view of flat geometry/ergodic theory.
For example, a generic geodesic with respect to the (flat) metric $|\varphi|$ converges to a higher order pole of $\varphi$, should one exist, but winds around densely on the surface if there are none.

Smith furthermore shows in~\cite{smith15} that the 3CY categories considered in~\cite{bs} are full subcategories of Fukaya categories of non-compact Calabi--Yau threefolds, essentially variants of those considered in~\cite{dddhp} adapted to the case of higher order poles.
As stated above, we expect a similar result for the categories $\mc C_{g,n}$ constructed in this paper.
Such an equivalence, together with the results of this paper, would confirm an expectation in~\cite[Subsection 3.4]{smith_stabsymp}.

\subsection{DT invariants}

In seminal work~\cite{ks}, Kontsevich--Soibelman devised a way of ``counting'' semistable objects in 3CY categories with stability condition, the so-called \textit{motivic Donaldson--Thomas invariants}. 
These greatly expand the scope of the invariants originally proposed by Donaldson--Thomas~\cite{donaldson_thomas} and rigorously defined by Thomas~\cite{thomas00}, building on the contributions of many researchers.

Having constructed a 3CY category with stability condition for any quadratic differential $\varphi$ with simple zeros and/or simple poles on a Riemann surface $C$, one can ask what its DT invariants are. 
We solve this problem for generic $\varphi$, expressing the answer in terms of numbers of finite-length geodesics with respect to the flat metric $|\varphi|$.

\subsubsection{Saddle connections and geodesic loops.}
The metric $|\varphi|$ has the following local structure: Away from the zeros and poles, it is locally isometric to the flat Euclidean plane, with local isometric charts given by $\int\sqrt{\varphi}$.
At each of the simple zeros and simple poles, the metric has a conical singularity with cone angle $3\pi$ and $\pi$, respectively.

There are two types of finite length (unbroken) geodesics on a flat surface:
\begin{enumerate}
\item 
\textbf{Saddle connections} start and end at conical points (possibly the same) and do not meet any conical points along the way.
\item
\textbf{Closed geodesics} are periodic trajectories which do not meet any conical points. 
These always appear in one-parameter families foliating a maximal \textbf{flat cylinder} (unless $g(C)=1$ and $\varphi$ is constant) whose boundary circles consist of one or more saddle connections.
\end{enumerate}
For any $L>0$ the number of saddle connections of length $\leq L$ or flat cylinders of circumference $\leq L$ is finite, and grows quadratically in $L$ as shown by Masur~\cite{masur88,masur90}.
Determining the number of finite-length geodesics (as a function of $L$) more precisely is one of the main themes in the subject of flat surfaces.

Instead of counting finite-length geodesics shorter than a given length, we consider more refined counts which keep track of homology classes.
Consider the lattice 
\[
\Gamma:=H_1(C,M;\ZZ\sqrt{\varphi})
\]
where $M$ is the set of zeros and poles of $\varphi$ and $\ZZ\sqrt{\varphi}$ is the local system of abelian groups of $\ZZ$-multiples of choices of $\pm\sqrt{\varphi}$.
Then $\Gamma$ depends, up to isomorphism, only on the genus, $g$, of $C$ and the number, $n$, of poles of $\varphi$, and has a natural non-degenerate anti-symmetric pairing $\Gamma\times\Gamma\to\ZZ$ (see Subsection~\ref{subsec_pairing} for details).
The periods of $\sqrt{\varphi}$ give a map $Z:\Gamma\to \CC$, and the moduli space $\mc Q_{g,n}$ is locally modeled on $\Hom(\Gamma,\CC)$.
Any saddle connection or closed geodesic defines a class in $\Gamma$, up to sign.
Our genericity condition on $(C,\varphi)$ is then the following:

\begin{df}
A pair $(C,\varphi)$ is \textit{generic} if any $\gamma_1,\gamma_2\in\Gamma$ supporting finite length geodesics of the same slope, i.e. with $\mathrm{Arg}(Z(\gamma_1))=\mathrm{Arg}(Z(\gamma_2))$, are $\QQ$-linearly dependent.
\end{df}

The second main result of this paper is then the following (Theorem~\ref{thm2} in the main text).

\begin{theorem}
\label{thm2_intro}
Fix a generic quadratic differential in $\mc Q_{g,n}$,
then the refined DT invariants of the corresponding stability condition on $\mc C_{g,n}$ are given by
\[
\Omega(\gamma)=N_{++}(\gamma)+2N_{+-}(\gamma)+4N_{--}(\gamma)+\left(q^{1/2}+q^{-1/2}\right)N_c(\gamma)
\]
where $\gamma\in\Gamma$ and $N_{++}(\gamma)$ (resp. $N_{+-}(\gamma)$, resp. $N_{--}(\gamma)$) is the number of saddle connections with class $\gamma$ between distinct zeros (resp. a zero and a pole, resp. distinct poles) of $\varphi$ and $N_c(\gamma)$ is the number of flat cylinders with class $\gamma$.
\end{theorem}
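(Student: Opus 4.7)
The plan is to compute $\Omega(\gamma)$ phase by phase in the slicing determined by $\varphi$. The genericity hypothesis guarantees that all classes with central charge on a fixed ray are $\QQ$-multiples of a primitive class, so there is no cross-interaction between distinct geodesic classes and $\Omega(\gamma)$ splits as an additive (motivic) sum over the individual saddle connections and flat cylinders of class $\gamma$. By the transfer of stability conditions to $\mc C_{g,n}$ established above, it suffices to enumerate these semistable objects in $\mc F(S\setminus M,\nu)$ and lift them to $\mc C_{g,n}$. The classification of \cite{hkk} is the starting point: semistable objects of phase $\theta$ in $\mc F(S\setminus M,\nu)$ are either indecomposable ``arc'' objects supported on a single saddle connection of slope $\theta$ (unique up to shift), or ``loop'' objects supported on a closed geodesic in a maximal flat cylinder of slope $\theta$, parametrized by a $\GG_m$-family of rank-$1$ local systems.

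Next I would compute, for each geometric type, the moduli stack and its motivic class for the lifts in $\mc C_{g,n}$. A saddle connection joining two points of $M_+$ lifts to a spherical object with $\Ext^\bullet\cong H^\bullet(S^3;\mathbf k)$; its moduli stack, after quotienting by the $\GG_m$-scalings, is a reduced point and contributes $1$. At a marked point of $M_-$ (cone angle $\pi$), the curved deformation $\mc A_\XX(S,\nu)$ carries an additional odd generator whose presence, after passing to the twisted-complex category $\mathrm{Tors}(\mc A_\XX(S,\nu))$, produces \emph{two} distinct stable lifts for each saddle-connection endpoint landing at a pole. Hence a saddle connection with one pole endpoint contributes $2$, and with two pole endpoints it contributes $2\cdot 2=4$. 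For a flat cylinder, the $\GG_m$-family of loop objects in $\mc F$ compactifies to a $\PP^1$-family in $\mc C_{g,n}$ once the 3CY deformation is switched on, the two added points corresponding to degenerate twisted complexes built from the boundary saddle connections of the cylinder. Applying the refined-DT formula then yields $[\PP^1]/\LL^{1/2}=\LL^{1/2}+\LL^{-1/2}=q^{1/2}+q^{-1/2}$. Summing contributions over all geodesics of class $\gamma$ gives the stated formula.

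The principal technical obstacle is justifying the doubling factor at each pole endpoint: one must verify that the $\asm_0$-curvature together with the twisting construction genuinely produces two distinct stable lifts of an arc object meeting $M_-$, rather than a single object with enlarged automorphism group (which would leave $\Omega$ unchanged in the motivic count). This reduces to a local analysis of $\mc A_\XX(S,\nu)$ near a simple pole and of how the base-change functor $\otimes_{\mathbf k[\eta]}\mathbf k$ interacts with stability and with indecomposability. A secondary subtlety is the compactification of the cylinder moduli from $\GG_m$ to $\PP^1$, where the two boundary points must be identified with specific twisted complexes assembled from the boundary saddle connections of the cylinder and shown to lie in the same semistable phase. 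Once these two points are settled, the remaining ingredients — the motivic Behrend weight, the factorization of the motivic DT series by ray, and the sum over geodesics of a given class — are standard within the Kontsevich--Soibelman framework.
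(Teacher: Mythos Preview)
Your overall strategy --- decompose by phase, use genericity to separate rays, and sum contributions from individual geodesic components --- matches the paper's. But your explanation of the factors $2$ and $4$ is wrong, and this is not a minor technical point but the heart of the computation.

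There are \emph{not} two (or four) distinct stable lifts of an arc object meeting $M_-$. By Proposition~\ref{prop_arcext}, a saddle connection $X$ from a zero to a pole gives a single object with $\Ext^\bullet(X,X)\cong\mathbf k[x]/x^4$, $|x|=1$; in particular $\Ext^1(X,X)\cong\mathbf k$ is one-dimensional, and the minimal-model potential on it is $W(\alpha)=\alpha^3$ coming from the cubic $A_\infty$-term. This is exactly the Yoneda algebra of the simple module over the one-loop quiver with potential $x^3$, and the refined DT invariant $\Omega=2$ arises from the motivic Milnor fibre of $x^3$ (computed in~\cite{davison_meinhardt}), not from any multiplicity of objects. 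The same mechanism gives $\Omega=4$ for a pole--pole saddle via the two-loop quiver with potential $x^3+y^3$ and a Thom--Sebastiani argument. Your proposed local analysis (``two distinct stable lifts'') would, if carried out, yield a single object with $\Ext^0\cong\mathbf k$, not two.

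Your cylinder argument is also off. The $\GG_m$-family of loop objects does not literally compactify to a $\PP^1$ of stable objects in $\mc C_{g,n}$. What happens instead (Proposition~\ref{prop_gencomp}) is that each cylinder boundary is one of three specific combinatorial types, and the full semistable category along the ray is generated by the boundary saddle object(s), whose 3CY subcategory is again identified with one of the small quivers in Proposition~\ref{prop_littlequivers}. The factorisations there show that each boundary contributes $q^{-1/2}$ to $\Omega(\gamma_{\mathrm{cyl}})$ while the non-nilpotent (interior $\GG_m$) part contributes $q^{1/2}-q^{-1/2}$, totalling $q^{1/2}+q^{-1/2}$; and the boundary saddle connections themselves receive their own contributions to $\Omega(\gamma_{\mathrm{cyl}}/2)$ (namely $4$ in the pole--pole case, $2$ in the paired zero--zero case), which must be booked separately. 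Your $\PP^1$-picture gives the right number for $\Omega(\gamma_{\mathrm{cyl}})$ by coincidence but obscures these cross-contributions.
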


The numerical DT invariants are obtained by setting $q^{1/2}=-1$:
\[
\Omega(\gamma)_{\mathrm{num}}=N_{++}(\gamma)+2N_{+-}(\gamma)+4N_{--}(\gamma)-2N_c(\gamma).
\]
As a consequence of Theorem~\ref{thm2_intro}, \textit{the numbers $\Omega(\gamma)$ and $\Omega(\gamma)_{\mathrm{num}}$ satisfy the Kontsevich--Soibelman wall-crossing formula~\cite{ks} as one moves in the moduli space $\mc Q_{g,n}$}.
This answer a question of Iwaki--Kidwai~\cite[Remark 3.13]{iwaki_kidwai}.

It is a consequence of a central ergodicity result in the theory of flat surfaces due to H.~Masur and W.~A.~Veech, that the generic orbit of the action of $\mathrm{GL}^+(2,\RR)$ on the quotient of $\mc Q_{g,n}$ by the mapping class group is \textit{dense}.
By Theorem~\ref{thm1_intro}, this can be stated entirely in terms of stability conditions on $\mc C_{g,n}$, and appears to be a rare phenomenon among triangulated categories, c.f.~\cite{h_conjstab}.
Thus we have both:
\begin{enumerate}
\item The $GL^+(2,\RR)$ action relating $\Omega(\gamma)$ for points on the same orbit in $\mc Q_{g,n}$.
\item The wall-crossing formula relating $\Omega(\gamma)$ for ``nearby'' points in $\mc Q_{g,n}$.
\end{enumerate}
This should impose strong constraints on the DT invariants $\Omega(\gamma)$, which are worth exploring further.

\subsection{Outline}

The text is organized as follows.
In Section~\ref{sec_ainfty} we develop the theory of curved $A_\infty$-categories over the algebra $\mathbf k[[t]]$ where $t$ has (bi-)degree of the form $(d,d\mod 2)\in\ZZ\times\ZZ/2$. 
We define our main examples of such categories in Section~\ref{sec_3cysurf} starting from a surface with some additional decorations.
This is an intermediate step in the construction of the proper 3CY categories $\mc C_{g,n}$, completed in the same section.
Our first main result, identifying a component of $\mathrm{Stab}(\mc C_{g,n})$ with a module space of quadratic differentials, is deduced in Section~\ref{sec_stability} from a more general result allowing the transfer of stability conditions along certain adjunctions.
The second main result, expressing DT invariants in terms of counts of saddle connections and flat cylinders, is established in Section~\ref{sec_dt}, which also includes a very brief introduction to motivic DT theory. 
Finally, Appendix~\ref{sec_qdilog} collects some facts around the quantum dilogarithm needed in Section~\ref{sec_dt}.

\vspace{\baselineskip}

\subsubsection{Acknowledgements} We thank Tom Bridgeland, Alexander Goncharov, and Andy Neitzke for encouragement and interest in this work, Dylan Allegretti for discussing his paper~\cite{allegretti}, Merlin Christ and Tobias Dyckerhoff for interest in this work and discussions on perverse schobers, Ben Davison for suggestions on how to compute DT invariants, Kohei Iwaki for bringing his work with Omar Kidwai to our attention, Ivan Smith for explaining his related work on 3CY categories, and Jon Woolf for discussing his paper~\cite{woolf12}.
Finally, we thank the anonymous referees for carefully reviewing the manuscript and valuable comments.
The author was supported by a Titchmarsh Research Fellowship during the writing of this paper.

\section{$A_\infty$-categories over a formal disk}
\label{sec_ainfty}

The first step in the construction of the 3CY categories $\mc C_{g,n}$ is to consider a deformation of the wrapped Fukaya category of the punctured surface over a polynomial ring $\mathbf k\langle t\rangle$, where $t$ has degree $-1$.
Two features of this deformation are:
\begin{enumerate}
\item It is necessary to consider an \textit{additional $\mathbb Z/2$ grading}, i.e. work with $\ZZ\times\ZZ/2$-graded vector spaces.
\item The deformation is a \textit{curved} $A_\infty$-category, i.e. has not only structure maps $\mk m_1$ (differential), $\mk m_2$ (product), $\mk m_3$ (homotopy up to which $\mk m_2$ is associative), $\mk m_4$, \ldots, but also $\mk m_0:\mathbf k\to \Hom^2(X,X)$ for every object $X$.
\end{enumerate}
The first has the effect of making all signs a bit more complicated, while the second is standard in the Fukaya category literature.
For the benefit of the non-expert, we will briefly explain the role of both before discussing details in the rest of this section.

\subsubsection{Additional $\ZZ/2$ grading}
$A_\infty$-categories can be defined over fields and more generally \textit{commutative} rings.
The Koszul sign rule $a\otimes b=(-1)^{|a||b|}b\otimes a$ dictates that in the free (super-) commutative algebra generated by an odd variable $t$, $\mathbf k[t]$, we have $t^2=0$. (Assume $\mathrm{char}(\mathbf k)\neq 2$.)
In other words, the polynomial algebra $\mathbf k\langle t\rangle$ is not commutative in the $\ZZ$-graded sense for $|t|$ odd.
This can be fixed by working instead with $\ZZ\times \ZZ/2$-graded objects and the sign rule 
\begin{equation}\label{ZZ2_braiding}
a\otimes b=(-1)^{|a||b|+\pi(a)\pi(b)}b\otimes a
\end{equation}
where $(|a|,\pi(a))\in\ZZ\times\ZZ/2$ is the bi-degree of $a$ (c.f. ``Point of View II'' in \cite{strings_course}).
If we give $t$ the $\ZZ/2$-degree $\pi(t)=|t|\mod 2$, then the free commutative algebra generated by $t$ now has underlying ungraded algebra the usual algebra of polynomials.

There is also a more geometric explanation in the context of the construction in this paper.
We have a punctured surface $S$ with a $T^*S^2$ (affine quadric surface) bundle over it.
If the bundle were trivial, then we would expect the (wrapped) Fukaya category of the total space to be the tensor product of the Fukaya category of the surface and the Fukaya category of $T^*S^2$, which happens to be $\mathrm{Perf}(\mathbf k\langle t\rangle)$, $|t|=-1$.
In general however, the bundle will have monodromy which reverses the orientation of $S^2$ and thus sends $t\mapsto -t$ in $\mathbf k\langle t\rangle$. In this case we expect the Fukaya category of the total space to be a twisted tensor product, i.e. a tensor product of $\ZZ\times\ZZ/2$-graded categories.

\subsubsection{Curved categories}

A general feature of families of categories which appear in geometry is that an object in a fiber of the family can have any number of extensions to the total family.
The following example illustrates this phenomenon.
For the benefit of the algebro-geometrically minded reader, we switch to the other side of homological mirror symmetry. 

Let $R:=\mathbf k[x,y]/(xy)$ be the coordinate ring of the nodal plane conic and $S:=\mathbf k[x,y,t]/(xy-t)$ its deformation to a smooth conic.
Suppose we want to extend the $R$-module $M:=R/(x)$ to an $S$-module in the derived sense, i.e. find $\widetilde{M}$ with $\widetilde{M}\otimes^{\mathbf L}_SR\cong M$.
The module $M$ has a 2-periodic resolution
\[
0\leftarrow N\leftarrow R\xleftarrow{x} R\xleftarrow{y} R\xleftarrow{x} R\xleftarrow{y} R \leftarrow \ldots
\]
in terms of free modules.
As ansatz for $\widetilde{M}$ we replace $R$ in the above resolution by $S$ and perturb the differential by terms in $tS$:
\[
S\xleftarrow{x+t\epsilon_1} S\xleftarrow{y+t\epsilon_2} S\xleftarrow{x+t\epsilon_3} S\xleftarrow{y+t\epsilon_4} S \leftarrow \ldots
\]
However, this is not a chain complex for any choice of $\epsilon_i\in S$, but instead a kind of ``curved object''.
Moreover, $M$ has indeed no extension to $S$ and is therefore \textit{obstructed}.

The $R$-module $N:=R/(x-1)$, on the other hand, has a resolution 
\[
0\leftarrow N\leftarrow R\xleftarrow{x-1}R\leftarrow 0
\]
which extends to
\[
0\leftarrow \widetilde{N}\leftarrow S\xleftarrow{x-1+t\epsilon}S\leftarrow 0
\]
corresponding to structure sheaves of plane curves which intersect the $x$-axis exactly in $x=1$ and do not intersect the $y$-axis.

In the framework of \textit{curved $A_\infty$-categories} the equation which one solves to find an extension of an object, $E$, to the total family is the $A_\infty$ Maurer--Cartan equation
\begin{equation*}
\sum_{n=0}^\infty\mk m_n(\delta,\ldots,\delta)=0
\end{equation*}
for $\delta\in\Hom^1(E,E)$.
If $\mk m_0=0$ then one has at least the trivial solution $\delta=0$, but if $\mk m_0\neq 0$ then there are possibly no solutions (the obstructed case).

\subsection{$A_\infty$-categories and functors}

$A_\infty$-categories are a generalization of differential graded (DG-) categories in which composition is only required to be associative up to homotopy, and this homotopy needs to satisfy a coherence condition, again up to homotopy, and so on.
We start with the ``vanilla'' definition without the extensions discussed in the introduction to this section.
Throughout, we fix a ground field $\mathbf k$.

\begin{df}
An \textbf{$A_\infty$-category} over $\mathbf k$, $\mc A$, is given by a collection of \textit{objects}, $\Ob(\mc A)$, for each pair $X$, $Y$ of objects a $\ZZ$-graded vector space $\Hom(X,Y)$ of \textit{morphisms}, and for $n\geq 1$ and $X_0,\ldots,X_n\in\Ob(\mc A)$ \textit{structure maps}
\[
\mk m_n:\Hom(X_{n-1},X_{n})\otimes\cdots\otimes\Hom(X_0,X_1)\to\Hom(X_0,X_n)
\] 
of degree $2-n$, satisfying the \textit{$A_\infty$-equations}
\begin{equation} \label{ainfty_eq}
\sum_{i+j+k=n}(-1)^{\|a_1\|+\ldots+\|a_i\|}\mk m_{i+1+k}(a_n,\ldots,\mk m_{j}(a_{i+j},\ldots,a_{i+1}),\ldots,a_1)=0
\end{equation}
where $\|a\|:=|a|-1$ is the \textit{reduced degree}.
Furthermore, we require the existence of \textit{strict units} $1_X\in\Hom^0(X,X)$ with
\begin{gather*}
\mk m_2(a,1_X)=a=(-1)^{|a|}\mk m_2(1_Y,a) \\
\mk m_n(\ldots,1_X,\ldots)=0,\qquad  n\neq 2
\end{gather*}
where $a\in\Hom(X,Y)$ is homogeneous.
\end{df}

$A_\infty$-categories with $\mk m_n=0$ for $n\geq 3$ correspond to DG-categories up to a sign change: The differential is $da=(-1)^{|a|}\mk m_1(a)$ and the composition is $ab=(-1)^{|b|}\mk m_2(a,b)$.

\begin{remark}
A note on signs: We follow the conventions as in~\cite{seidel08}, which are common in the Fukaya category literature. They are based on the degree in the bar complex, $\|a\|$, and have the advantage of leading to overall simpler signs, but are somewhat unnatural, as seen in the comparison with DG-categories above.
\end{remark}

Next, we define $A_\infty$-categories over $\mathbf k[[t]]$, where $t$ has bi-degree $(|t|,\pi(t))=(d,d\mod 2)\in\ZZ\times\ZZ/2$.
We regard $\mathbf k[[t]]$ as a commutative algebra in the category of $\ZZ\times\ZZ/2$-graded vector spaces over $\mathbf k$ with braiding isomorphism~\eqref{ZZ2_braiding}.
The underlying ungraded algebra is the algebra of power series $\mathbf k[[t]]$ if $|t|=0$  and the algebra of polynomials $\mathbf k[t]$ if $|t|\neq 0$. 
The use of double brackets in the notation is justified in both cases though, since, if $|t|\neq 0$, $\mathbf k[t]$ is complete as a graded algebra (with the usual $t$-adic filtration). Indeed, power series in $t$ are \textit{not} a graded algebra if $|t|\neq 0$. 

As morphism spaces in $A_\infty$-categories over $\mathbf k[[t]]$ we will only allow modules which are \textit{topologically free} in the following sense:
If $M$ is a $\ZZ\times\ZZ/2$-graded vector space then the module $M\otimes_{\mathbf k}\mathbf k[[t]]$ has a completion with respect to its $t$-adic filtration which we denote by $M[[t]]$.
Concretely, an element of $M[[t]]^{m,n}$ is given by a formal power series $\sum_{k=0}^\infty a_kt^k$ with $a_k\in M^{m-k|t|,n-k\pi(t)}$.
In particular, $M\otimes_{\mathbf k}\mathbf k[[t]]$ is already complete whenever $M$ is finite-dimensional.
A $\mathbf k[[t]]$-module is \textbf{topologically free} if it is isomorphic to one of the form $M[[t]]$.

\begin{df}
An \textbf{$A_\infty$-category over $\mathbf k[[t]]$}, $\mc A$, is given by a collection of objects $\Ob(\mc A)$, for each pair $X$, $Y$ of objects a topologically free $\mathbf k[[t]]$-module $\Hom(X,Y)$ of morphisms, and for $n\geq 0$ and $X_0,\ldots,X_n\in\Ob(\mc A)$ structure maps
\[
\mk m_n:\Hom(X_{n-1},X_{n})\otimes_{\mathbf k}\cdots\otimes_{\mathbf k}\Hom(X_0,X_1)\to\Hom(X_0,X_n)
\]
of bi-degree $(2-n,0)$ such that $\mk m_0\in\Hom^{2,0}(X,X)t$, the $A_\infty$-equations \eqref{ainfty_eq} hold, now allowing $j=0$, i.e. terms with $\mk m_0$. 
Furthermore, the $\mk m_n$ should be $\mathbf k[[t]]$-multilinear in the sense that
\begin{gather*}
\mk m_n(a_n,\ldots,a_it,\ldots,a_1)=(-1)^{*}\mk m_n(a_n,\ldots,a_i,\ldots,a_1)t \\
*=|t|(\|a_1\|+\ldots+\|a_{i-1}\|+1)+\pi(t)(\pi(a_1)+\ldots+\pi(a_{i-1}))
\end{gather*}
(The $+1$ in the sign comes from thinking of $\mk m_n$ as acting on the right and having reduced degree $1$.)
Finally, strict units are required to have bi-degree $(0,0)$.
\end{df}

Note that $\mk m_0$ is really a map $\mathbf k\to\Hom^{2,0}(X,X)$ for each $X\in\Ob$, which we will usually identify with the element $\mk m_0(1)\in\Hom^{2,0}(X,X)$.
According to our definition, it should vanish at $t=0$. We refer the interested reader to the introduction of~\cite{positselski} for a discussion on this requirement.

It is useful to reformulate the above definition in terms of Taylor coefficients.
Suppose we have chosen isomorphisms of $\mathbf k[[t]]$-modules
\[
\Hom(X,Y)\cong\ol{\Hom}(X,Y)[[t]]
\]
for any pair of objects $X,Y$, where $\ol{\Hom}(X,Y):=\Hom(X,Y)/(\Hom(X,Y)t)$.
Then we can expand the structure maps $\asm_n$ in terms of powers of $t$ as
\[
\mk m_n(a_n,\ldots,a_1)=:\sum_{k\geq 0}\mk m_{n,k}(a_n,\ldots,a_1)t^k
\]
where
\[
\mk m_{n,k}:\ol{\Hom}(X_{n-1},X_n)\otimes\cdots\otimes\ol{\Hom}(X_0,X_1)\to\ol{\Hom}(X_0,X_n)
\]
are $\mathbf k$-linear maps of degree $2-n-k|t|$ and parity $k\pi(t)$.

\begin{lemma}
The components $\mk m_{n,k}$, as above, define an $A_\infty$-category over $\mathbf k[[t]]$ if and only if
\begin{equation}
\label{a_infty_eta}
\sum_{\substack{r+s=d \\ i+j+k=n}}(-1)^{*}\mk m_{i+1+k,r}(a_n,\ldots,\mk m_{j,s}(a_{i+j},\ldots,a_{i+1}),\ldots,a_1)=0
\end{equation}
for $n,d\geq 0$, where the sign is given by
\[
*=(s|t|+1)(\|a_1\|+\ldots+\|a_i\|)+s|t|(\pi(a_1)+\ldots+\pi(a_i)+1),
\]
as well as $\mk m_{0,0}=0$, $\mk m_{2,0}(a,1_X)=a=(-1)^{|a|}\mk m_{0,2}(1_Y,a)$, and $\mk m_{n,k}(\ldots,1_X,\ldots)=0$ for $n\neq 2$ or $k>0$.
\end{lemma}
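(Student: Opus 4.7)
The plan is a direct expansion. Having fixed isomorphisms $\Hom(X,Y)\cong\ol{\Hom}(X,Y)[[\eta]]$, each structure map decomposes uniquely as $\asm_n=\sum_{k\geq 0}\asm_{n,k}\eta^k$, and since the Hom-spaces are topologically free $\mathbf k[\eta]$-modules, equation \eqref{ainfty_eq} is equivalent to the vanishing of each of its coefficients of $\eta^d$, for $d\geq 0$. The task is therefore to identify that coefficient with the left-hand side of \eqref{a_infty_eta}, and to translate the curvature and unitality axioms termwise in $\eta$.

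Substitute $\asm_{i+1+k}=\sum_{r}\asm_{i+1+k,r}\eta^r$ into the outer operation and $\asm_j=\sum_{s}\asm_{j,s}\eta^s$ into the inner one in \eqref{ainfty_eq}. The inner produces $\asm_{j,s}(a_{i+j},\ldots,a_{i+1})\,\eta^s$, sitting in the $(i+1)$-st slot of the outer $\asm_{i+1+k}$. Apply the $\mathbf k[\eta]$-multilinearity rule $s$ times to move the $\eta^s$ to the right as a global factor. A single application contributes the sign $(-1)^{\|a_1\|+\cdots+\|a_i\|+\pi(a_1)+\cdots+\pi(a_i)+1}$, with the trailing $+1$ accounting for the passage through the outer structure map (reduced degree $1$); since this sign depends only on $a_1,\ldots,a_i$ and not on the current argument in the $(i+1)$-st slot, the $s$ iterations accumulate its $s$-th power. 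Multiplying by the original sign $(-1)^{\|a_1\|+\cdots+\|a_i\|}$ from \eqref{ainfty_eq} yields exactly the sign $*$. Collecting the coefficient of $\eta^d$ then enforces $r+s=d$ and gives \eqref{a_infty_eta}. The converse is the same identity read backwards: summing \eqref{a_infty_eta} over $d$ and invoking topological freeness reconstitutes \eqref{ainfty_eq}.

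The remaining conditions are immediate translations. The requirement $\asm_0(X)\in\Hom^{2,0}(X,X)\eta$ is literally $\asm_{0,0}(X)=0$. Since $1_X\in\Hom^{0,0}(X,X)$ lies in the $\eta^0$-part, strict unitality of $\asm_2$ and the vanishing of $\asm_n(\ldots,1_X,\ldots)$ for $n\neq 2$ decompose componentwise in $\eta$ to $\asm_{2,0}(a,1_X)=a=(-1)^{|a|}\asm_{2,0}(1_Y,a)$ and $\asm_{n,k}(\ldots,1_X,\ldots)=0$ whenever $n\neq 2$ or $k>0$.

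The only real subtlety is the sign bookkeeping in the main step: one must verify that the per-iteration multilinearity sign is truly independent of the residual power of $\eta$ left in the $(i+1)$-st slot, so that $s$ successive applications simply multiply the exponent by $s$; the combinatorial identity $(1+s)\equiv(s+1)\pmod 2$ then rearranges the total sign into the form $*$ displayed in \eqref{a_infty_eta}.
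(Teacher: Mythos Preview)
Your argument is correct and follows essentially the same route as the paper's proof: expand $\asm_n$ in powers of $\eta$, use the $\mathbf k[\eta]$-multilinearity rule to pull $\eta^s$ out of the inner operation to the right (picking up the sign $s(\|a_1\|+\cdots+\|a_i\|+\pi(a_1)+\cdots+\pi(a_i)+1)$), combine with the sign already present in \eqref{ainfty_eq}, and read off the coefficient of $\eta^d$; for the converse, invoke topological freeness to extend the $\asm_{n,k}$ back to continuous $\mathbf k[\eta]$-multilinear maps. Your treatment of the curvature and unitality conditions is likewise the same componentwise translation.
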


\begin{proof}
Given an $A_\infty$-category over $\mathbf k[[t]]$, expand $\mk m_n$ in terms of $\mk m_{n,k}$ in the $A_\infty$-equation for the $\mk m_n$, then \eqref{a_infty_eta} is the coefficient of $t^d$.
From the $A_\infty$-equations we get a sign $\|a_1\|+\ldots+\|a_i\|$ and moving $t^s$ past $a_1,\ldots,a_i$ and $\mk m_n$ gives an additional sign $s|t|(\|a_1\|+\ldots+\|a_i\|+\pi(a_1)+\ldots+\pi(a_i)+1)$.

Conversely, any $\mk m_{n,k}$ extend uniquely to $\mathbf k[[t]]$-multilinear maps
\begin{gather*}
\left(\ol{\Hom}(X_{n-1},X_{n})\otimes_{\mathbf k}\mathbf k[[t]]\right)\otimes_{\mathbf k}\cdots\otimes_{\mathbf k}\left(\ol{\Hom}(X_0,X_1)\otimes_{\mathbf k}\mathbf k[[t]]\right) \to \\
\to\ol{\Hom}(X_0,X_n)\otimes_{\mathbf k}\mathbf k[[t]]
\end{gather*}
and to continuous maps
\[
\ol{\Hom}(X_{n-1},X_{n})[[t]]\otimes_{\mathbf k}\cdots\otimes_{\mathbf k}\ol{\Hom}(X_0,X_1)[[t]]\to\ol{\Hom}(X_0,X_n)[[t]]
\]
by topological freeness. 
The equation \eqref{a_infty_eta} is then equivalent to \eqref{ainfty_eq} and similarly for strict unitality.
\end{proof}

So far we have considered $A_\infty$-categories over $\mathbf k$ and (curved) $A_\infty$-categories over $\mathbf k[[t]]$. 
Neither is a special case of the other, but they both fit within the larger framework of \textit{filtered} $A_\infty$-categories in the following sense (c.f.~\cite{h_skein}).

\begin{df}
A \textbf{filtered $A_\infty$-category over $\mathbf k$}, $\mc A$, is given by a collection of objects $\Ob(\mc A)$, for each pair $X$, $Y$ of objects a vector space $\Hom(X,Y)$ with decreasing filtration
\[
\Hom(X,Y)\supseteq \Hom(X,Y)_{\geq 0}\supseteq \Hom(X,Y)_{\geq 1} \supseteq \Hom(X,Y)_{\geq 2} \supseteq \ldots
\]
which is complete and Hausdorff, and for $n\geq 0$ and $X_0,\ldots,X_n\in\Ob(\mc A)$ structure maps
\[
\mk m_n:\Hom(X_{n-1},X_{n})\otimes_{\mathbf k}\cdots\otimes_{\mathbf k}\Hom(X_0,X_1)\to\Hom(X_0,X_n)
\]
of degree $2-n$ which are filtration preserving (with respect to the natural filtration on the tensor product) and such that $\mk m_0\in\Hom^2(X,X)_{\geq 1}$, the $A_\infty$-equations \eqref{ainfty_eq} hold. 
We also require the existence of strict units as before.
\end{df}

An $A_\infty$-category over $\mathbf k$ is filtered by $\Hom(X,Y)_{\geq 0}:=\Hom(X,Y)$ and $\Hom(X,Y)_{\geq 1}=0$.
An $A_\infty$-category over $\mathbf k[[t]]$ is filtered by the $t$-adic filtration, $\Hom(X,Y)_{\geq k}=\Hom(X,Y)t^k$.
We can also consider $\ZZ\times\ZZ/2$-graded filtered $A_\infty$-categories.

\begin{df}
An \textbf{$A_\infty$-functor} $F$ between filtered $A_\infty$-categories $\mc A$ and $\mc B$  is given by a map $F:\Ob(\mc A)\to\Ob(\mc B)$ and a collection of filtration preserving maps
\[
F_n:\Hom_{\mc A}(X_{n-1},X_n)\otimes\cdots\otimes \Hom_{\mc A}(X_0,X_1)\to\Hom_{\mc B}(FX_0,FX_n)
\]
of degree $1-n$, $n\geq 1$, such that the $A_\infty$-equations
\begin{equation}\label{ainfty_mor}
\begin{gathered}
\sum_{i_1+\ldots+i_k=n}\mk m_k(F_{i_k}(a_n,\ldots,a_{n-i_k+1}),\ldots,F_{i_1}(a_{i_1},\ldots,a_1)) \\
=\sum_{i+j+k=n}(-1)^{\Vert a_k\Vert+\ldots+\Vert a_1\Vert}F_{i+1+k}(a_n,\ldots,\mk m_j(a_{n-i},\ldots,a_{k+1}),a_k,\ldots,a_1).
\end{gathered}
\end{equation}
hold.
We further require $A_\infty$-functors to be \textit{strictly unital}, i.e.
\[
F_1(1_X)=1_{FX},\qquad F_n(\ldots,1_X,\ldots)=0\qquad\text{for }n>1.
\]
\end{df}

\begin{remark}
One can also consider \textit{curved} $A_\infty$-functors, which are defined similarly but allow non-zero $F_0=F_{0,X}:\mathbf k\to\Hom^1_{\mc B}(FX,FX)$ for every $X\in\mc A$.
The left-hand side of~\eqref{ainfty_mor} then has potentially infinitely many terms, so some condition on $F_0$ is needed to ensure convergence, e.g. $F_{0,X}\in\Hom^1(X,X)_{\geq 1}$.
However, one can replace such a curved functor by an uncurved one by sending $X$ to the \textit{twisted complex} (deformed object) $(X,F_{0,X}(1))$ instead.
Twisted complexes are defined in the next subsection. 
\end{remark}

$A_\infty$-functors are composed as follows:
\begin{align*}
(F\circ G)_n(a_n,\ldots,a_1)&:= \\ \sum_{i_1+\ldots+i_k=n}& F_k(G_{i_k}\left(a_n,\ldots,a_{n-i_k+1}),\ldots,G_{i_1}(a_{i_1},\ldots,a_1)\right)
\end{align*}
Finally, a \textbf{natural transformation}, $\lambda$, from $F:\mc A\to\mc B$ to $G:\mc A\to\mc B$ is given by a collection of maps
\[
\lambda_n:\Hom_{\mc A}(X_{n-1},X_n)\otimes\cdots\otimes \Hom_{\mc A}(X_0,X_1)\to\Hom_{\mc B}(FX_0,GX_n)
\]
of degree $-n$, $n\geq 0$, such that
\begin{gather*}
\sum_{\substack{i_1+\ldots+i_k=n \\ 1\leq j\leq k}}(-1)^{\Vert a_{i_1+\ldots+i_{j-1}}\Vert+\ldots+\Vert a_1\Vert}\mk m_k(G_{i_k}(a_n,\ldots,a_{n-i_k+1}),\ldots, \\
G_{i_{j+1}}(\ldots,a_{i_1+\ldots+i_j+1}),
\lambda_{i_j}(a_{i_1+\ldots+i_j},\ldots,a_{i_1+\ldots+i_{j-1}+1}), \\
F_{i_{j-1}}(a_{i_1+\ldots+i_{j-1}},\ldots),\ldots,F_{i_1}(a_{i_1},\ldots,a_1)) \\
=\sum_{i+j+k=n}(-1)^{\Vert a_{k}\Vert+\ldots+\Vert a_1\Vert-1}\lambda_{i+1+k}(a_n,\ldots,\mk m_j(a_{n-i},\ldots,a_{k+1}),a_k,\ldots,a_1).
\end{gather*}
for $n\geq 0$.
We will only need to consider natural transformations $\lambda$ with $\lambda_n=0$ for $n\geq 1$, see Subsection~\ref{subsubsec_basechange}.

\begin{remark}
$A_\infty$-functors between a fixed pair of $A_\infty$-categories form an $A_\infty$-category themselves, whose closed degree zero morphisms are the natural transformations in the above sense, see e.g.~\cite{seidel08}.
\end{remark}

\subsection{Twisted complexes}
\label{subsec_twisted}

\textit{Twisted complexes} can be viewed as a generalization, to $A_\infty$-algebras and -categories, of the notion of a chain complex of free modules (of finite total dimension) over a ring. 
There are different flavors depending on the precise conditions on the differential.
When the differential is represented by a strictly upper-triangular matrix, the twisted complex has an interpretation as an iterated extension, and thus categories of such twisted complexes give a concrete model for the closure under finite direct sums, shifts, and cones.
However, we will also come across twisted complexes where the differential is not strictly upper triangular, which still makes sense in the setting of filtered $A_\infty$-categories.

\subsubsection{Additive closure}
The first step in the construction of categories of twisted complexes is to form the closure under finite direct sums and shifts.
As in the previous section, we start with the definition in the case of $A_\infty$-categories over $\mathbf k$, i.e. uncurved and $\ZZ$-graded, following~\cite[Section 3k]{seidel08}, then discuss modifications.

\begin{df}
Let $\mc A$ be an $A_\infty$-category over $\mathbf k$, then its \textbf{additive closure}, $\mathrm{Add}(\mc A)$, is the $A_\infty$-category defined as follows.
Objects are formal direct sums of tensor products
\[
X=\bigoplus_{i\in I}V_i\otimes X_i
\]
where $I$ is a finite set, $V_i$ are finite-dimensional $\ZZ$-graded vector spaces over $\mathbf k$, and $X_i\in\Ob(\mc A)$.
Morphisms are 
\[
\Hom\left(\bigoplus_{i\in I}V_i\otimes X_i,\bigoplus_{j\in J}W_j\otimes Y_j\right):=\bigoplus_{i,j}\Hom(V_i,W_j)\otimes_{\mathbf k}\Hom(X_i,Y_j)
\]
with the natural $\ZZ$-grading.
Structure maps are given by
\begin{equation}
\label{add_closure_maps}
\mk m_n(\phi_n\otimes a_n,\ldots,\phi_1\otimes a_1):=(-1)^{\sum_{i<j}|\phi_i|\|a_j\|}\phi_n\cdots\phi_1\otimes\mk m_n(a_n,\ldots,a_1)
\end{equation}
extended $\mathbf k$-linearly.
\end{df}

In the case where $\mc A$ has an additional $\ZZ/2$ grading, the $V_i$'s are instead $\ZZ\times\ZZ/2$-graded vector spaces, i.e. we consider formal shifts in arbitrary bi-degree.
The sign in~\eqref{add_closure_maps} should then be
\[
(-1)^{\sum_{i<j}\left(|\phi_i|\|a_j\|+\pi(\phi_i)\pi(a_j)\right)}
\]
to take the additional grading into account.

When $\mc A$ is an $A_\infty$-category over $\mathbf k[[t]]$, $\mathrm{Add}(\mc A)$ is so too.
This uses the fact that topologically free modules are closed under finite direct sums and shifts.
Similarly, when $\mc A$ is a filtered $A_\infty$-category, then so is $\mathrm{Add}(\mc A)$.

\subsubsection{Twisting cochains}
The second step is to add ``formal deformations'' of objects in $\mathrm{Add}(\mc A)$.
Here it makes sense to start with the case of filtered $A_\infty$-categories.

\begin{df}
Let $\mc A$ be a filtered $A_\infty$-category over $\mathbf k$, then its \textbf{category of (two-sided) twisted complexes}, $\mathrm{Tw}(\mc A)$, is the uncurved, filtered $A_\infty$-category defined as follows.
Objects of $\mathrm{Tw}(\mc A)$ are pairs $(X,\delta)$ where $X\in\mathrm{Add}(\mc A)$ and $\delta\in\Hom^1(X,X)$, the \textit{twisting cochain}, so that the constant term, $\delta_0\in\Hom^1(X,X)/\Hom^1(X,X)_{\geq 1}$, is \textit{strictly upper triangular}, i.e. if $X=\bigoplus_{i\in I}V_i\otimes X_i$ then there is a total order on $I$ so that the component of $\delta_0$ in $\Hom(V_i\otimes X_i,V_j\otimes X_j)/\Hom(V_i\otimes X_i,V_j\otimes X_j)_{\geq 1}$ vanishes for $i\leq j$, and furthermore $\delta$ satisfies the \textit{($A_\infty$-)Maurer--Cartan} equation
\begin{equation}
\label{mc_eqn}
\sum_{n=0}^\infty\mk m_n(\delta,\ldots,\delta)=0
\end{equation}
where the series converges since $\delta_i$'s are strictly upper triangular mod higher order terms.
Morphisms are the same as in $\mathrm{Add}(\mc A)$:
\[
\Hom_{\mathrm{Tw}(\mc A)}((X_1,\delta_1),(X_2,\delta_2)):=\Hom_{\mathrm{Add}(\mc A)}(X_1,X_2)
\]
and structure maps $\widetilde{\mk m}_n$ are obtained by ``inserting $\delta$'s everywhere'':
\begin{equation}\label{twisted_structure_maps}
\widetilde{\mk m}_n(a_n,\ldots,a_1):=\sum_{k_0,\ldots,k_n\ge 0}\mk m_{n+k_0+\ldots+k_n}(\underbrace{\delta_n,\ldots,\delta_n}_{k_n\text{ times}},a_n,\ldots,a_1,\underbrace{\delta_0,\ldots,\delta_0}_{k_0\text{ times}})
\end{equation}
where $a_i\in\Hom((X_{i-1},\delta_{i-1}),(X_i,\delta_i))$.
Here the sum is again infinite, but converges by the assumption on $\delta$.
\end{df}

We note that without imposing the Maurer--Cartan equation~\eqref{mc_eqn} on $\delta$, $\mathrm{Tw}(\mc A)$ would be a filtered $A_\infty$-category \textit{with curvature}, i.e. with $\widetilde{\mk m}_0\neq 0$ in general.
The Maurer--Cartan equation is just $\widetilde{\mk m}_0=0$.
Because $\mathrm{Tw}(\mc A)$ is uncurved, we can view it as an $A_\infty$-category over $\mathbf k$ (forgetting the filtration).

Recall that we can consider (uncurved, unfiltered) $A_\infty$-categories over $\mathbf k$ as filtered by the discrete filtration with $\Hom(X,Y)_{\geq 1}=0$.
Then any twisting cochain $\delta$ is strictly upper triangular, not just modulo higher order terms.
More generally, for possibly filtered $\mc A$, we write $\mathrm{Tw}^+(\mc A)$ for the full subcategory of $\mathrm{Tw}(\mc A)$ of pairs $(X,\delta)$ with $\delta$ strictly upper triangular, the \textit{one-sided twisted complexes}.
An uncurved $A_\infty$-category, $\mc A$ is \textbf{triangulated} if the natural inclusion of $\mc A$ into $\mathrm{Tw}^+(\mc A)$ is a quasi-equivalence.
In particular, $\mathrm{Tw}^+(\mc A)$ is triangulated, the triangulated closure of $\mc A$.

\begin{remark}
The notation $\mathrm{Tw}^+(\mc A)$ is not standard and this category is denoted by ``$\mathrm{Tw}(\mc A)$'' in \cite{seidel08}. 
\end{remark}

If $\mc A$ is a filtered $A_\infty$-category with additional $\ZZ/2$ grading, then there is a full subcategory of $\mathrm{Tw}(\mc A)$, denoted $\mathrm{Tw}_{\ZZ/2}(\mc A)$, of pairs $(X,\delta)$ where $\delta$ is \textit{even}, thus has bi-degree $(1,0)$.
Then $\mathrm{Tw}_{\ZZ/2}(\mc A)$ is $\ZZ\times\ZZ/2$-graded since the twisted structure maps $\widetilde{\mk m}_n$ are even.
Finally, if $\mc A$ is an $A_\infty$-category over $\mathbf k[[t]]$, then so is $\mathrm{Tw}_{\ZZ/2}(\mc A)$, and moreover an uncurved one.

\paragraph*{\textbf{Warning: }}
The category $\mathrm{Tw}_{\ZZ/2}(\mc A)$ is generally not triangulated, since only cones over \textit{even} morphisms have been added, but general morphisms have both an even and an odd component. 
The larger category $\mathrm{Tw}(\mc A)$, while not $\ZZ/2$-graded, still has a $\ZZ/2$-action, and subcategory of objects fixed under the action is precisely $\mathrm{Tw}_{\ZZ/2}(\mc A)$.

\subsection{Base-change adjunction}
\label{subsubsec_basechange}

As before, let $\mc A$ be an $A_\infty$-category over $\mathbf k[[t]]$ where $t$ has bi-degree $(d,d\mod 2)$ for some $d\in \ZZ$.
Interpreting $\mc A$ as a family of categories over the formal disk, the ``central fiber'' of that family is the $A_\infty$-category $\mc A_0:=\mc A \otimes_{\mathbf k[[t]]}\mathbf k$ with objects $\Ob(\mc A_0)=\Ob(\mc A)$, morphisms
\[
\Hom_{\mc A_0}(X,Y):=\overline{\Hom}_{\mc A}(X,Y):=\Hom_{\mc A}(X,Y)\otimes_{\mathbf k[[t]]}\mathbf k
\]
and induced structure maps.
There is a functor  $F:\mc A\to \mc A_0$ (of $\ZZ\times\ZZ/2$-graded, filtered $A_\infty$-categories) which is the identity on objects, and the quotient map $\Hom(X,Y)\to \overline{\Hom}(X,Y)$ on morphisms.
The goal of this section is to construct the right adjoint, $G$, of $F$, more precisely a functor
\[
G:\mc A_0\to \mathrm{Tw}_{\ZZ/2}(\mc A)
\]
so that $F$ and $G$ induce an adjunction
\[
F:\mathrm{Tw}_{\ZZ/2}(\mc A) \longleftrightarrow \mathrm{Tw}_{\ZZ/2}(\mc A_0):G
\]
of $A_\infty$-categories.
(Here and elsewhere we use the same symbol for a functor and its extension to twisted complexes.)

To construct $G$ we choose vector space splittings 
\[
\Hom_{\mc A}(X,Y)=\overline{\Hom}_{\mc A}(X,Y)\oplus \left(\Hom_{\mc A}(X,Y)t\right)
\]
and write elements as $a=\bar{a}+\hat{a}t$ with $\bar{a}$ and $\hat{a}t$ in the first and second summand above, respectively.
Define $G$ on objects by 
\begin{equation}
G(X):=\left(X\oplus X[1-d,d],\begin{pmatrix} 0 & t \\ (-1)^d\hat{\mk m}_0 & 0\end{pmatrix}\right)
\end{equation}
and on morphisms by 
\begin{equation}
G_1(a):=\begin{pmatrix} a & 0 \\ (-1)^{d+\delta(a)}\hat{\mk m}_1(a) & a \end{pmatrix}
\end{equation}
where $\delta(a):=(1-d)\|a\|+d\pi(a)$.
There are also higher order terms given by
\begin{equation}
G_n(a_n,\ldots,a_1):=\begin{pmatrix} 0 & 0 \\ (-1)^{d+\delta(a_1)+\ldots+\delta(a_n)}\hat{\mk m}_n(a_n,\ldots,a_1) & 0 \end{pmatrix}
\end{equation}
for $n\geq 2$.

\begin{prop}
$G:\mc A_0\to \mathrm{Tw}_{\ZZ/2}(\mc A)$ as defined above is an $A_\infty$-functor of $\ZZ\times\ZZ/2$-graded $A_\infty$-categories.
\end{prop}

\begin{proof}
For notational purposes, consider the curved $A_\infty$-functor $G'$ with
\[
G'(X):=\left(X\oplus X[1-d,d],\begin{pmatrix} 0 & t \\ 0 & 0\end{pmatrix}\right),\qquad G'_0=\begin{pmatrix} 0 & 0 \\ (-1)^d\hat{\mk m}_0 & 0\end{pmatrix},
\]
and $G'_n=G_n$ for $n\geq 1$.
Then $G'$ satisfies the $A_\infty$-functor equation~\eqref{ainfty_mor} iff $G$ does.

The left-hand side of \eqref{ainfty_mor} is
\begin{equation}
\label{Geq_L}
\sum_{i_1+\ldots+i_k=n}\widetilde{\mk m}_k(G'_{i_k}(a_n,\ldots,a_{n-i_k+1}),\ldots,G'_{i_1}(a_{i_1},\ldots,a_1))
\end{equation}
where only partitions with at most one $i_j\neq 1$ contribute, since $G'_i$ is strictly lower triangular 2-by-2 for $i\neq 1$.
Also, $\widetilde{\mk m}_k=\mk m_k$ for $k\geq 2$, since the twisting cochain of $G'(X)$ involves only $t$, which is a scalar multiple of the identity, and strict unitality of the $\mk m_k$.
There are three types of terms in~\eqref{Geq_L}:
Terms of the form
\begin{equation}
\label{Geq_L1}
\begin{pmatrix} \mk m_n(a_n,\ldots,a_1) & 0 \\ 0 & \mk m_n(a_n,\ldots,a_1) \end{pmatrix}
\end{equation}
coming from $\mk m_n(G'_1(a_n),\ldots,G'_1(a_1))$ (which also contributes to \eqref{Geq_L3} below), terms of the form
\begin{equation}
\label{Geq_L2}
\begin{pmatrix} -\hat{\mk m}_n(a_n,\ldots,a_1)t & 0 \\ 0 & -\hat{\mk m}_n(a_n,\ldots,a_1)t \end{pmatrix}
\end{equation}
coming from 
\[
\mk m_2\left(\begin{pmatrix}0 & t \\ 0 & 0\end{pmatrix},G'_n(a_n,\ldots,a_1)\right)+\mk m_2\left(G'_n(a_n,\ldots,a_1),\begin{pmatrix}0 & t \\ 0 & 0\end{pmatrix}\right)
\]
which is a summand of $\widetilde{\mk m}_1(G'_n(a_n,\ldots,a_1))$, and terms of the form
\begin{equation}
\label{Geq_L3}
(-1)^{d+\delta(a_n)+\ldots+\delta(a_{i+1})}\begin{pmatrix} 0 & 0 \\ \mk m_n(a_n,\ldots,\hat{\mk m}_j(a_{i+j},\ldots,a_{i+1}),\ldots,a_1) & 0 \end{pmatrix}
\end{equation}
coming from $\mk m_n(G'_1(a_n),\ldots,G'_j(a_{i+j},\ldots,a_i),\ldots,G'_1(a_1))$.

On the other hand, the right-hand side of \eqref{ainfty_mor} is 
\begin{equation}
\label{Geq_R}
\sum_{i+j+k=n}(-1)^{\Vert a_k\Vert+\ldots+\Vert a_1\Vert}G'_{i+1+k}(a_n,\ldots,\bar{\mk m}_j(a_{n-i},\ldots,a_{k+1}),\ldots,a_1)
\end{equation}
which is a sum of two types of terms:
Terms of the form
\begin{equation}
\label{Geq_R1}
\begin{pmatrix} \bar{\mk m}_n(a_n,\ldots,a_1) & 0 \\ 0 & \bar{\mk m}_n(a_n,\ldots,a_1) \end{pmatrix}
\end{equation}
and terms of the form
\begin{equation}
\label{Geq_R2}
(-1)^{\|a_1\|+\ldots+\|a_i\|+\delta(a_1)+\ldots+\delta(a_n)+1}\begin{pmatrix}
0 & 0 \\ \hat{\mk m}_n(a_n,\ldots,\bar{\mk m}_j(a_{i+j},\ldots,a_{i+1}),\ldots,a_1) & 0\end{pmatrix}
\end{equation}
But \eqref{Geq_L1}, \eqref{Geq_L2}, and \eqref{Geq_R1} cancel because $\mk m_n=\bar{\mk m}_n+\hat{\mk m}_nt$, and \eqref{Geq_L3} and \eqref{Geq_R2} cancel by looking at terms in positive powers of $t$ in the $A_\infty$-equation for the $\mk m_n$'s.
\end{proof}

The composite functor $FG$ send $X$ to
\begin{equation}
\label{FGobj}
F(G(X))=\left(X\oplus X[1-d,d],\begin{pmatrix} 0 & 0 \\ (-1)^d\bar{\hat{\mk m}}_0 & 0\end{pmatrix}\right)
\end{equation}
(where $\bar{\hat{\mk m}}_0$ is just the linear part of $\mk m_0$)
and we define the counit natural transformation $\varepsilon:FG\to 1_{\mc A_0}$ by projection to the first summand, $\varepsilon_X:X\oplus X[1-d,d]\to X$, and with vanishing higher order terms.
There is also a natural transformation $[1-d,d]\to FG$ by inclusion of the second summand.

\begin{prop}
\label{prop_adjunction}
$G$ is right adjoint to $F$, in the sense that the natural transformation $\varepsilon$ induces quasi-isomorphisms
\begin{equation}
\label{adj_qis}
\Hom_{\mathrm{Tw}_{\ZZ/2}(\mc A)}(X,GY)\longrightarrow\Hom_{\mathrm{Tw}_{\ZZ/2}(\mc A_0)}(FX,Y).
\end{equation}
Moreover, $\varepsilon$ fits into an exact triangle of functors
\begin{equation}
[1-d,d]\longrightarrow FG\longrightarrow 1_{\mc A_0}\longrightarrow [2-d,d].
\end{equation}
\end{prop}

\begin{proof}
The domain of~\eqref{adj_qis} is
\[
\Hom(X,Y\oplus Y[1-d,d])=\Hom(X,Y)\oplus \Hom(X,Y[1-d,d])
\]
and the map induced by $\varepsilon$ sends the first summand to the quotient of $\Hom(X,Y)$ by $\Hom(X,Y)t$ and vanishes on the second summand.
Thus, to show the first claim of the proposition, it suffices to show that $\Hom(X,Y)t\oplus \Hom(X,Y[1-d,d])$ is acyclic.
The differential, $D$, on this complex is
\[
D\begin{pmatrix} a \\ b \end{pmatrix} = \begin{pmatrix} \mk m_1(a)-(-1)^{\delta(b)}bt \\ \mk m_1(b)+(-1)^d\mk m_2(\hat{\mk m}_0,a) \end{pmatrix}
\]
for which 
\[
H\begin{pmatrix}a \\ b\end{pmatrix} := \begin{pmatrix} 0 \\ -(-1)^{d+\delta(a)}at^{-1} \end{pmatrix}
\]
is a contracting homotopy since
\[
(DH+HD)\begin{pmatrix}a \\ b\end{pmatrix}=\begin{pmatrix} a \\ -(-1)^{\delta(a)}\mk m_1(a)t^{-1}\end{pmatrix}+\begin{pmatrix} 0 \\ b+(-1)^{\delta(a)}\mk m_1(a)t^{-1} \end{pmatrix}
\]
thus $DH+HD=\mathrm{Id}$, so the complex in question is acyclic.

For the second claim we note that an exact sequence of functors and natural transformations is  exact (by definition) if it is exact on objects.
But this follows since~\eqref{FGobj} is an extension of the first and third term in the sequence.
\end{proof}

\begin{df}
Define $\mathrm{Tors}(\mc A):=\mathrm{Tw}^+(\mathrm{Im}(G))$, the triangulated closure of the full subcategory of $\mathrm{Tw}_{\ZZ/2}(\mc A)$ of objects in the image of $G$.
\end{df}

\section{3CY categories of surfaces}
\label{sec_3cysurf}

In this section we construct the 3CY categories $\mc C(S,\nu)$ starting from a compact surface $S$ with punctures $M\subset S$ and foliation $\nu$ on $S\setminus M$ which locally looks like the horizontal foliation of a quadratic differential with simple zeros and simple poles.
In particular, as a special case, we can choose $\nu$ to be the horizontal foliation $\mathrm{hor}(\varphi)$ of such a quadratic differential $\varphi$. 
The category $\mc C_{g,n}:=\mc C(S,\mathrm{hor}(\varphi))$, mentioned in the introduction, then depends up to equivalence only on the genus $g$ of $S$ and the number $n$ of poles of $\varphi$.

We start by reviewing the construction of the wrapped Fukaya category from~\cite{hkk} in Subsection~\ref{subsec_wrapped}.
This category admits an additional $\ZZ/2$-grading as explained in Subsection~\ref{subsec_Z2}.
For appropriate choice of $\nu$, the $\ZZ\times\ZZ/2$-graded version of the wrapped Fukaya category of $S\setminus M$ admits a deformation over $\mathbf k[[t]]$, where $|t|=-1$, $\pi(t)=\mathrm{odd}$, defined in Subsection~\ref{subsec_def}.
The category $\mc C(S,\nu)$ is defined as certain torsion modules over the $\mathbf k[[t]]$-linear category.
Up to this point, the construction depends on a choice of arc system $\XX$ for $(S,M)$. The result of Subsection~\ref{subsec_changearc} is that $\mc C(S,\nu)$ is independent, up to equivalence, of $\XX$.
Finally, in Subsection~\ref{subsec_3cy} we show that $\mc C(S,\nu)$ is a proper 3-Calabi--Yau category.

\subsection{Wrapped Fukaya categories of surfaces}
\label{subsec_wrapped}

We begin by reviewing the construction of the \textit{wrapped Fukaya category of a surface} as in~\cite[Section 3.3]{hkk}. 
In fact, the construction there is more general, allowing also \textit{partial wrapping}, but those categories will not be relevant here.

The categories are defined for a compact oriented surface $S$ with finite subset $M\subset S$ of \textit{punctures}, a foliation or \textit{grading structure} $\nu\in\Gamma(S\setminus M,\PP(TS))$, and choice of ground field $\mathbf k$. 
Here, $\nu$ is simply a line in each $T_pS$, $p\in S\setminus M$, varying smoothly with $p$.
The corresponding category is denoted $\mc F(S\setminus M,\nu)$ and is a triangulated $A_\infty$-category over $\mathbf k$.

For us, the main source of $\nu$ are \textit{horizontal foliations} of quadratic differentials: 
If $S$ is a Riemann surface and $\varphi$ a meromorphic quadratic differential on $S$ with set $M\subset S$ of zeros and poles, then the horizontal foliation $\mathrm{hor}(\varphi)$ with $\mathrm{hor}(\varphi)_p:=\{v\in T_pS\mid \varphi(v,v)\in\mathbb R_{\geq 0}\}$ is a grading structure on $S\setminus M$.

\subsubsection*{Step 1: Arc system}
The first step in constructing $\mc F(S\setminus M,\nu)$ is to choose an \textit{arc system} for $(S,M)$. 
\begin{df}
An \textbf{arc system} for a punctured surface $(S,M)$ is a collection, $\XX$, of embedded compact intervals in $S$ such that:
\begin{enumerate}
\item endpoints of arcs belong to $M$,
\item different arcs can intersect only in endpoints and do so transversely,
\item each point of $M$ belongs to some arc,
\item the arcs cut $S$ into a collection of polygons.
\end{enumerate}
\end{df}
The strategy is to define an $A_\infty$-category $\mc F_\XX$ with objects $\Ob(\mc F_\XX)=\XX$ and then show that $\mc F(S\setminus M,\nu):=\mathrm{Tw}^+(\mc F_\XX)$ is, up to quasi-equivalence, independent of $\XX$.
(To be precise, $\mc F_\XX$ depends also on a choice of grading on each arc, see below, and objects should be thought of as arcs together with the chosen grading.)

\subsubsection*{Step 2: Grading}
The next step is to choose a grading for each arc $X\in\XX$.
Let $\widetilde{\PP(TS)}$ be the fiberwise universal covering of $\PP(TS)|_{S\setminus M}$, where we take $\nu(p)$ as a basepoint in $\PP(T_pS)$. (In fact, we could have taken the $\RR$-bundle $\widetilde{\PP(TS)}$ with map to $\PP(TS)|_{S\setminus M}$ as a starting point, instead of $\nu$.)
The definition of grading for general curves is the following.

\begin{df}
Suppose $S$ is a smooth oriented surface with grading $\nu\in\Gamma(S,\PP(TS))$ and $c:I\to S$ an immersed curve, where $I$ is a 1-manifold. 
A \textbf{grading} of $c$ is a section, $\tilde{c}$, of $c^*\widetilde{\PP(TS)}$ which lifts the section of $c^*\PP(TS)$ given by the tangent spaces to $c$.
A \textbf{graded curve} is a curve together with a choice of grading.
\end{df}

The action of $\ZZ$ on each $\widetilde{\PP(T_pS)}$ by deck-transformations gives us an action of $\ZZ$ on the set of gradings of a curve by $\tilde{c}\mapsto\tilde{c}+n$.
If $I$ is an interval, then the set of gradings of a curve $c:I\to S$ is a $\ZZ$-torsor.
Also note that for an arc $X\in\XX$, the grading is really chosen over the interior of $X$ only, since $\nu$ is not defined at the endpoints.

\subsubsection*{Step 3: Real blow-up}
The \textit{real blow-up} of $S$ in $M$ is a compact surface $\widehat{S}$ with boundary, together with a map $\pi:\widehat{S}\to S$ which maps the interior of $\widehat{S}$ diffeomorphically to $S\setminus M$ and collapses each boundary circle (component of $\partial\widehat{S}$) to a point of $M$.
The system of arcs, $\XX$, lifts to a collection of disjoint embedded intervals in $\widehat{S}$ with endpoints on $\partial\widehat{S}$ which cut the surface into polygons with double the number of corners as the corresponding polygons in $S$.
After possibly perturbing $\nu$ near $M$, we can assume that $\nu$ extends to $\widehat{S}$. Each lifted arc is then a graded curve in $\widehat{S}$.

The purpose of passing to the real blow-up is to define \textit{boundary paths}, which are used in the definition of morphisms of $\mc F(S\setminus M,\nu)$.

\begin{df}
A \textbf{boundary path} is an immersed path $a:[0,1]\to \partial\widehat{S}$, up to reparametrization, which follows the boundary in the direction opposite to its natural orientation, i.e. so that the interior of $\widehat{S}$ is to the right of the path as one travels along it.
\end{df} 

By definition, $\Hom_{\mc F_\XX}(X,Y)$ is the free $\mathbf k$-vector space generated by boundary paths from an arc $X$ to an arc $Y$, and, if $X=Y$, the identity morphisms $1_X$.

\subsubsection*{Step 4: Degrees of boundary paths}
Let $a$ be a boundary path starting at an arc $X\in\XX$ and ending at an arc $Y\in\XX$.
As before, we assume that arcs are graded.
Then one can assign a \textit{degree} $|a|\in\ZZ$ to $a$.
This is based on the following more general definition.

\begin{df}
Let $L_0=(I_0,c_0,\tilde{c}_0)$ and $L_1=(I_1,c_1,\tilde{c}_1)$ be graded curves intersecting transversely at $p=c_0(t_0)=c_1(t_1)$. 
The \textbf{intersection index} of $L_0$ and $L_1$ at $p$ is
\begin{equation}
\begin{aligned}
i(L_0,t_0,L_1,t_1) & :=\lceil \tilde{c}_0(t_0)-\tilde{c}_1(t_1)\rceil \\
& :=\text{smallest }n\in\ZZ\text{ with }\tilde{c}_0(t_0)<\tilde{c}_1(t_1)+n
\end{aligned}
\end{equation}
where we use the total order on $\widetilde{\PP(T_pS)}$ coming from the orientation of $S$ (counterclockwise rotation is increasing) and the action of $\ZZ=\pi_1(\widetilde{\PP(T_pS)})$ by deck transformations as before. 
\end{df}

To define the degree of a boundary path, $a$, choose an arbitrary grading on $a$ and let
\[
|a|:=i(X,a(0),a,0)-i(Y,a(1),a,1)
\]
which is independent of the grading on $a$ and gives a $\ZZ$ grading on $\Hom_{\mc F_\XX}(X,Y)$.
Composition in $\mc F_\XX$ is defined on boundary paths by
\[
\mk m_2(a,b):=(-1)^{|b|}ab
\]
where $ab$ is the concatenation of boundary paths if $a$ starts where $b$ ends, or $0$ otherwise. 
If $\XX$ cuts out bigons, i.e. has one or more pairs of isotopic arcs, then $\mk m_2$ has additional terms decribed below.

\subsubsection*{Step 5: Immersed disks}

To complete the definition of the $A_\infty$-category $\mc F_\XX$, we need to define higher order terms $\mk m_{\geq 3}$. These come from certain immersed disks in $S$.

\begin{df}
\label{def_diskseq0}
Let $D$ be the standard $2n$-gon with edges 
\[
e_1, f_1, e_2, f_2,\ldots, e_n, f_n 
\]
in clockwise order. 
Suppose $\psi:D\to \widehat{S}$ is an immersion so that $X_i:=\psi|_{e_i}$ is an arc in $\XX$ and $a_i:=\psi|_{f_i}$ is a boundary path.
Then the sequence of boundary paths $a_1,\ldots,a_n$ is a \textbf{disk sequence}.
\end{df}

If $a_1,\ldots,a_n$ is a disk sequence, then for any boundary paths $b$ which ends where $a_1$ starts we define
\[
\mk m_n(a_n,\ldots,a_1b):=(-1)^{|b|}b
\]
and for any boundary path $c$ which starts where $a_n$ ends we define
\[
\mk m_n(ca_n,\ldots,a_1):=c
\]
and in all other cases the higher composition maps are zero.

\begin{prop}[\cite{hkk}]
$\mc F_\XX$ as defined above is an $A_\infty$-category over $\mathbf k$.
\end{prop}

\begin{ex}
Let $S=S^2$ be the 2-sphere with two punctures $M\subset S$ and arbitrary grading $\nu$.
For $\XX$ choose a single arc $X$ connecting the two points in $M$, cutting $S^2$ into a bigon.
The real blow-up $\widehat{S}$ is a cylinder, so $\partial\widehat{S}$ has two components corresponding to boundary paths $x$ and $y$ which go once around the boundary. 
Then $\Hom(X,X)$ has basis $1$, $x$, $y$, $x^2$, $y^2$, \ldots where the grading depends on $\nu$ and satisfies $|x|=-|y|$. 
The disk sequences are $x^k,y^k$ and $y^k,x^k$, $k\geq 1$, and we get $y=x^{-1}$, i.e. $\Hom(X,X)=\mathbf k\langle x,x^{-1}\rangle$ is just a graded algebra. 
The Fukaya category $\mc F(S\setminus M,\nu)$ is the category of twisted complexes over that algebra.
\end{ex}

\subsubsection*{Step 6: Independence of choice of arc system}

If two arc system $\XX$, $\XX'$ are isotopic, then there is an isomorphism of $A_\infty$-categories $\mc F_\XX\cong \mc F_{\XX'}$.
Less trivially, if $\XX'\subset \XX$ is a sub-arc system, then the inclusion functor $\mc F_{\XX'}\to \mc F_{\XX}$ induces a quasi-equivalence of categories of twisted complexes  $\mathrm{Tw}^+(\mc F_{\XX'})\to \mathrm{Tw}^+(\mc F_{\XX})$, as shown in~\cite{hkk}. 
The same holds if $\XX$ and $\XX'$ just differ in the gradings of the arcs.
This, together with the contractability of the classifying space of arc systems, implies that $\mathrm{Tw}^+(\mc F_{\XX})$ is essentially independent of the choice of $\XX$ and we denote it by $\mc F(S\setminus M,\nu)$.

\subsection{Additional $\ZZ/2$-grading}
\label{subsec_Z2}

Before deforming the wrapped Fukaya category $\mc F(S\setminus M,\nu)$ (see next subsection) it is necessary to upgrade the $\ZZ$-grading on $\Hom$'s to a $\ZZ\times\ZZ/2$-grading.

First, let $\Sigma\to S\setminus M$ be the double cover with fiber over a given point $p\in S\setminus M$ equal to the two-element set of orientations of the line $\nu(p)\subset T_pS$.
(If $\nu$ is the horizontal foliation of a quadratic differential $\varphi$, then $\Sigma$ is the double cover where $\sqrt{\varphi}$ is well-defined.)
For each arc $X\in\XX$ choose not just a lift to $\widetilde{\PP(TS)}$, but also a lift to $\Sigma$.

\begin{df}
The \textbf{parity} of a boundary path $a$ from an arc $X$ to an arc $Y$, denoted $\pi(a)$, is even (resp. odd) if the two lifts of $X$ and $Y$ lie on the same (resp. different) sheet of $\Sigma$ when transported along $a$.
\end{df}

Note that parity is additive with respect to concatenation of boundary paths, and the total parity of a disk sequence is even (since $\Sigma$ has no monodromy over a disk).
Thus $\mc F_\XX$ is a $\ZZ\times\ZZ/2$-graded $A_\infty$-category when equipped with this extra grading.
Passing to twisted complexes, $\mathrm{Tw}_{\ZZ/2}(\mc F_\XX)$ is a $\ZZ\times\ZZ/2$-graded $A_\infty$-category which is essentially independent of $\XX$.
The shift $X[m,n]$ of an arc corresponds to changing the lifts to $\widetilde{\PP(TS)}$ and $\Sigma$ accordingly.

A choice of lift of an arc $X$ to both $\widetilde{\PP(TS)}$ and $\Sigma$ determines an orientation of $X$, since the latter is a choice of orientation of the lines $\nu(p)\subset T_p(S)$ and the former gives a homotopy class of paths in $\PP(T_p(S))$ between $\nu(p)$ and the tangent line to $X$ at $p$.
In fact, we see from this that a choice of lift of $X$ to $\Sigma$ is equivalent to a choice of orientation of $X$. 
Moreover, the induced orientation on $X[m,n]$ is the same (resp. opposite) to the one on $X$ if $m+n$ is even (resp. odd).

Suppose that $a$ is a boundary path from an arc $X$ to an arc $Y$ with $X$ and $Y$ given their induced orientations.
Define $\varepsilon_+(a):=0$ (resp.  $\varepsilon_+(a):=1$) if $Y$ points away from (resp. towards) $a$ at the end of $a$. Similarly, define $\varepsilon_-(a):=0$ (resp.  $\varepsilon_-(a):=1$) if $X$ points away from (resp. towards) $a$ at the start of $a$, then 
\begin{equation}\label{parity_def}
\pi(a)+|a|=\varepsilon_+(a)+\varepsilon_-(a)\qquad \mod 2.
\end{equation}

\subsubsection*{$\ZZ/2$-action on the wrapped Fukaya category}
A-priori, the $A_\infty$-category $\mathrm{Tw}_{\ZZ/2}^+(\mc F_\XX)$ is, after forgetting the $\ZZ/2$-grading, embedded in the larger category $\mathrm{Tw}^+(\mc F_\XX)=\mc F(S\setminus M,\nu)$.
However, it turns out that, up to quasi-equivalence, these are the same category.
Another way of phrasing this is that there is a $\ZZ/2$-action on the wrapped Fukaya category (coming from the additional $\ZZ/2$-grading) which fixes all isomorphism classes of objects.

\begin{lemma}
\label{lem_inv_trivial}
The $\ZZ\times\ZZ/2$-graded category $\mathrm{Tw}_{\ZZ/2}^+(\mc F_{\XX})$ is triangulated when viewed as a $\ZZ$-graded category, i.e. $\mathrm{Tw}_{\ZZ/2}^+(\mc F_{\XX})$ and $\mathrm{Tw}^+(\mc F_{\XX})$ are quasi-equivalent.
Thus, the involutive autoequivalence on the wrapped Fukaya category $\mc F(S\setminus M,\nu)$ induced by change of parity acts trivially on isomorphism classes of objects, i.e. $X\cong X[0,1]$ for $X\in\mc F(S\setminus M,\nu)$.
\end{lemma}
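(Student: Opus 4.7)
The plan is to reduce the statement to showing that for every arc $X \in \XX$, the objects $X$ and $X[0,1]$ are isomorphic in $\mathrm{Tw}(\mc A(\XX)_0)$. Granted this, any $\ZZ$-graded degree-one morphism $f \colon X \to Y$ of twisted complexes decomposes as $f = f_0 + f_1$ with $f_\epsilon$ of parity $\epsilon$; using the identifications $Y \cong Y[0,1]$ (extended to all objects via the twisted-complex construction), the parity-one part $f_1$ may be reinterpreted as a bi-degree $(1,0)$-morphism $X \to Y[0,1]$, so that a $\ZZ \times \ZZ/2$-homogeneous cone already exists in $\mathrm{Tw}(\mc A(\XX)_0)$. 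Combining the cones of $f_0$ and the rewritten $f_1$ then yields the $\ZZ$-graded cone of $f$, establishing the quasi-equivalence $\mathrm{Tw}(\mc A(\XX)_0) \simeq \mathrm{Tw}_\ZZ(\mc A(\XX)_0)$. The second assertion of the lemma on $\mc F(S \setminus M,\nu)$ is then immediate, since the parity-shift functor becomes an autoequivalence that is trivial on isomorphism classes.

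For the key isomorphism $X \cong X[0,1]$, I would combine two sources of morphisms. First, the boundary loops $c_{p,X}$ at the two endpoints $p, q \in M$ of $X$, both of bi-degree $(3,1)$, serve as nontrivial odd-parity self-morphisms; viewed as $(0,0)$-morphisms $X \to X[3,1]$, they furnish building blocks for twisted complexes that incorporate a parity flip. Second, for any polygon $P$ of $\XX$ containing $X$ as one of its sides $X = X_0, X_1, \ldots, X_{n-1}$, the degree-$0$ disk sequence $a_1,\ldots,a_n$ together with the lemma of Subsection~\ref{subsec_changearc} gives $X_0 \cong (X_1 \to \cdots \to X_{n-1})$ after suitable shifts. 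The plan is to use such polygon-based identifications with strategically chosen parity shifts on some of the $X_i$'s, together with the wrapping morphisms $c_{p,X}, c_{q,X}$ at the endpoints, to build an explicit twisted complex that represents simultaneously $X$ and $X[0,1]$ and to exhibit a bi-degree $(0,0)$ morphism between the two presentations.

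The principal obstacle is that no nonzero degree-zero odd-parity endomorphism of $X$ exists in $\mc A(\XX)_0$ itself (the only bi-degree $(0,0)$ self-morphism is $1_X$; all boundary loops $c_{p,X}^k$ have bi-degree $(3k,k)$), so the isomorphism must genuinely live in the twisted-complex completion, and neither summand of a mapping cone can by itself represent $X[0,1]$. Verifying the isomorphism therefore requires exhibiting an explicit cochain, a candidate inverse, and then a signed $A_\infty$-calculation using the structure equations~\eqref{a_infty_eta}, in particular incorporating the contributions of the higher operations $\asm_{n,0}$ from immersed polygons on $S$, to check that the composition with its inverse is homotopic to the identity via Proposition~\ref{prop_refl}. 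Once the isomorphism is established on arcs, it propagates to all objects of $\mathrm{Tw}(\mc A(\XX)_0)$ by the generation property and naturality of the parity shift.
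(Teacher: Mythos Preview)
Your proposal misidentifies where the difficulty lies. For an arc $X$, the isomorphism $X\cong X[0,1]$ is \emph{trivial}: the element $\sigma_{X,0,1}\in\Hom^{0,1}(X,X)=\Hom^{0,0}(X,X[0,1])$ from the discussion of $\mc A(\XX)_{\mathrm{full}}$ is closed (the category is minimal) and squares to $1_X$, so all of your proposed machinery with polygon disks and the loops $c_{p,X}$ is unnecessary at the arc level.

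The real content is the step you treat as routine: extending $X\cong X[0,1]$ from arcs to arbitrary twisted complexes. Your appeal to ``naturality of the parity shift'' fails precisely here --- as the paper warns just before the proof, $\sigma_{0,1}$ is \emph{not} a natural transformation in the $\ZZ$-graded sense, because the sign $(-1)^{\pi(a)}$ enters when commuting $\sigma$ past a boundary path $a$. Concretely, for $(Y,\delta)\in\mathrm{Tw}(\mc A(\XX)_0)$ the diagonal map $\sigma_{Y,0,1}$ is $\tilde\asm_1$-closed only when every boundary path appearing in $\delta$ has even parity, which is not automatic. Your reduction in the first paragraph has the same circularity: you invoke ``the identifications $Y\cong Y[0,1]$ extended to all objects,'' but that extension is equivalent to what you are trying to prove. (Also, the cone of $f=f_0+f_1$ is not obtained by ``combining'' the cones of $f_0$ and $f_1$ separately.)

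The paper's argument is entirely different: it invokes the classification of indecomposable objects of $\mc F(S\setminus M,\nu)$ from \cite{hkk} (string and band objects built from a sequence of arcs $X_0,\ldots,X_{n-1}$ linked by boundary paths $a_i$), and then observes that one is free to choose the orientation of each $X_i$ so that it points away from $a_i$ and towards $a_{i+1}$. With that choice, \eqref{parity_def} gives $\pi(a_i)=|a_i|+\varepsilon_-(a_i)+\varepsilon_+(a_i)=1+1+0=0$, so the twisting cochain $\delta$ already has bidegree $(1,0)$ and the object lies in $\mathrm{Tw}(\mc A(\XX)_0)$. The classification theorem is the essential input your approach lacks.
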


\begin{figure}
\centering
\includegraphics[scale=1.3]{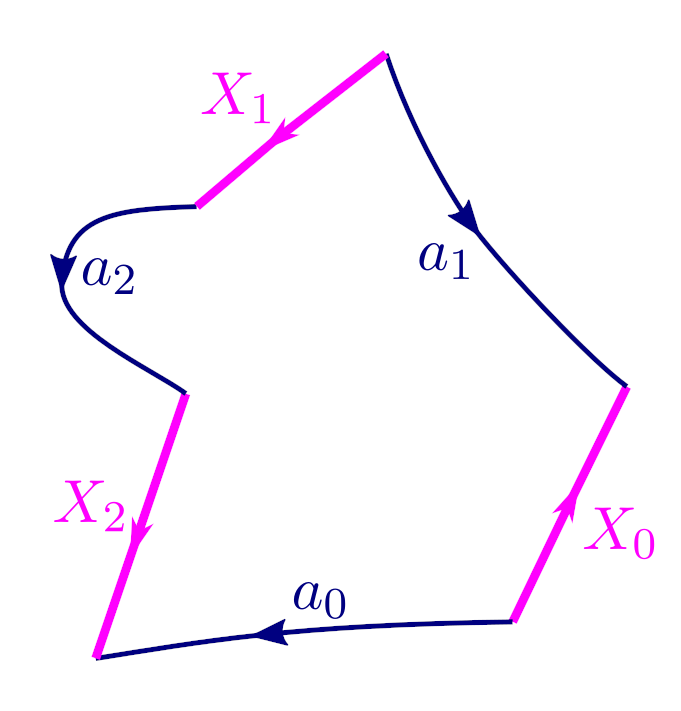}
\caption{Example of a cyclic sequence of arcs and boundary paths from which an indecomposable object is constructed as a twisted complex.}
\label{fig_indobj}
\end{figure}

\begin{proof}
Let $\mc C$ be the closure of $\mc F_\XX$ under shifts, i.e. the category which has for each arc $X\in\XX$ a $\ZZ\times\ZZ/2$-torsor of objects corresponding to the various lifts of $X$ to $\widetilde{\PP(TS)}$ and $\Sigma$.
To prove the statement of the lemma it suffices to show that any isomorphism class of objects in $\mathrm{Tw}^+(\mc C)$ can be represented by a twisted complex $(X,\delta)$ with $\delta\in\Hom^{1,0}(X,X)$, i.e. $\delta$ has even parity.

According to the classification result of \cite{hkk}, all indecomposable objects in $\mc F(S\setminus M,\nu)$ are obtained from the following construction. 
Let $X_0,\ldots,X_{n-1}$ be a sequence of objects in $\mc C$, either linearly ordered (the \textit{string} case) or cyclically ordered (the \textit{band} case), and $\delta_i\in\mathrm{Hom}^1(X_{i-1}^{\oplus r}, X_i^{\oplus r})$ or $\delta_i\in\mathrm{Hom}^1(X_i^{\oplus r},X_{i-1}^{\oplus r})$ of the form $A_i\otimes a_i$ where $A_i$ is an invertible $r\times r$-matrix ($r>0$ fixed in the band case, $r=1$ in the string case) and $a_i$ is a boundary path so that $a_i$ and $a_{i+1}$ meet the arc $X_i$ in opposite endpoints of $X_i$ (see Figure~\ref{fig_indobj}).
Under some additional conditions on the sequence, which will not be relevant here, the pair $(X,\delta)$ with $X=(X_1\oplus\cdots\oplus X_n)^{\oplus r}$ and $\delta$ with non-zero coefficients $\delta_i$ is a twisted complex.
After possibly changing the parity of $X_i$, the induced orientation on $X_i$ points away from $a_i$ and towards $a_{i+1}$, i.e. matching the orientation of the path which follows the linear or cyclic sequence $X_0,a_1,X_1,a_2,X_2,\ldots$.
According to \eqref{parity_def} we have 
\[
\pi(a_i)=|a_i|+\varepsilon_-(a_i)+\varepsilon_+(a_i)=1+1=0 \mod 2
\]
thus $\delta$ has even parity.
\end{proof}

\subsection{Deformation}
\label{subsec_def}

Recall from Subsection~\ref{subsec_wrapped} that the definition of the wrapped Fukaya category $\mc F(S\setminus M,\nu)$ is based on counting immersed polygons in the real blow-up $\widehat{S}$ of $S$ in $M$.
Instead, we could also count immersed polygons in $S$ itself, which would correspond to immersed polygons with holes (``swiss cheese slices'') in $\widehat{S}$, with a hole whenever a punctured is covered by the disk. Counting this with weight $t^k$, where $k$ is the number of holes, turns out to give a deformation of $\mc F(S\setminus M,\nu)$ over $\mathbf k[[t]]$, but the degree of $t$ depends on the structure of $\nu$ near the punctures.
To establish the relation we need the following definitions.

\begin{df}
Let $S$ be a surface with grading structure $\nu$ and $c:S^1\to S$ an immersed loop in $S$.
The \textbf{(Maslov) index}, $\mathrm{ind}(c)$, of $c$ is defined as the intersection index between the graphs of the following two sections of $c^*\PP(TS)\cong S^1\times S^1$: 1) the section $\nu\circ c$ and 2) the section given by the tangent lines to $c$.
The sign ambiguity is fixed by the following normalization: If $\nu$ is the horizontal foliation of the quadratic differential $z^kdz^2$ on $\CC$ and $c$ a counterclockwise circle around the origin, then $\mathrm{ind}(c)=k+2$ (see Figure~\ref{fig_foliation}). 
\end{df}

If $S$ is an oriented surface, $p\in S$, and a grading $\nu$ is defined away from $p$, then the \textbf{index} of $p$ is by definition the index of a small counterclockwise loop around $p$.

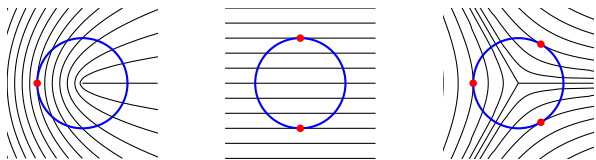
\begin{figure}
\centering
\begin{tikzpicture}[domain=-1:1]
\clip (-1,-1) rectangle (1,1);
\draw (1,0) to (0,0);
\draw[smooth,domain=-5:5,scale=.2,variable=\y] plot ({(\y*\y-.3*.3)/(2*.3)},{\y});
\foreach \i in {1,...,15}
{
	\draw[smooth,domain=-5:5,scale=.2,variable=\y] plot ({(\y*\y-\i*\i)/(2*\i)},{\y});
}
\draw[thick,blue] (0,0) circle[radius=.6];
\fill[thick,red] (-.6,0) circle[radius=.05];
\end{tikzpicture}
\qquad
\begin{tikzpicture}[domain=-1:1]
\clip (-1,-1) rectangle (1,1);
\foreach \i in {-5,...,5}
{
	\draw[smooth,domain=-5:5,scale=.2,variable=\x] plot ({\x},{\i});
}
\draw[thick,blue] (0,0) circle[radius=.6];
\fill[thick,red] (0,.6) circle[radius=.05];
\fill[thick,red] (0,-.6) circle[radius=.05];
\end{tikzpicture}
\qquad
\begin{tikzpicture}[domain=-1:1]
\clip (-1,-1) rectangle (1,1);
\foreach \i in {1,...,6}
{
	\draw[smooth,domain=1:119,scale=.2,variable=\t] plot ({\i*(1/exp(ln(sin(1.5*\t))*2/3))*cos(\t)},{\i*(1/exp(ln(sin(1.5*\t))*2/3))*sin(\t)});
}
\foreach \i in {1,...,6}
{
	\draw[smooth,domain=1:119,scale=.2,variable=\t] plot ({\i*(1/exp(ln(sin(1.5*\t))*2/3))*cos(\t)},{-\i*(1/exp(ln(sin(1.5*\t))*2/3))*sin(\t)});
}
\foreach \i in {1,...,6}
{
	\draw[smooth,domain=1:119,scale=.2,variable=\t] plot ({\i*(1/exp(ln(sin(1.5*\t))*2/3))*cos(\t+120)},{\i*(1/exp(ln(sin(1.5*\t))*2/3))*sin(\t+120)});
}
\draw (0,0) to (1,0);
\draw (0,0) to (-1,1.73);
\draw (0,0) to (-1,-1.73);
\draw[thick,blue] (0,0) circle[radius=.6];
\fill[thick,red] (-.6,0) circle[radius=.05];
\fill[thick,red] (.3,.52) circle[radius=.05];
\fill[thick,red] (.3,-.52) circle[radius=.05];
\end{tikzpicture}
\caption{Horizontal foliations of quadratic differentials with a simple pole (left), regular point (middle), and simple zero (right). The index of circle (oriented counterclockwise) is 1, 2, and 3, respectively.}
\label{fig_foliation}
\end{figure}

\begin{df}
Suppose $S$ is an oriented surface, $M\subset S$, and $\nu$ is a grading structure on $S\setminus M$.
For given $d\in\ZZ$ we say that $\nu$ is \textbf{$d$-compatible} if for each $p\in M$, some positive integer multiple of $\mathrm{ind}(p)$ is $d$.
\end{df}

The corresponding degree of $t$ is then $|t|=2-d$.
Here, we focus on the case $d=3$ (where we get 3CY categories), so $|t|=-1$, and a meromorphic quadratic differential has 3-compatible horizontal foliation in the above sense iff all its zeros and poles are simple.
Even if $\nu$ is not the horizontal foliation of a quadratic differential, we will suggestively refer to $p\in M$ with $\mathrm{ind}(p)=1$ as \textit{poles}, and $p\in M$ with $\mathrm{ind}(p)=3$ as \textit{zeros}.

\vspace{10pt}

\noindent\fbox{\begin{minipage}{\dimexpr\textwidth-2\fboxsep-2\fboxrule\relax}\textbf{Assumption:} For the rest of this section, $S$ is a compact oriented surface, $M\subset S$ a non-empty finite subset, and $\nu$ a grading structure on $S\setminus M$ which is 3-compatible.\end{minipage}}

\vspace{10pt}

Under the above assumption we will define a curved $A_\infty$-category $\mc A_\XX$ over $\mathbf k[[t]]$, where $t$ is a formal variable of bi-degree $(-1,1)$, with central fiber $\mc A_\XX\otimes_{\mathbf k[[t]]}\mathbf k=\mc F_\XX$.
Thus set
\[
\mathrm{Ob}(\mc A_\XX):=\mathrm{Ob}(\mc F_\XX)=\XX,\qquad \Hom_{\mc A_\XX}(X,Y):=\Hom_{\mc F_\XX}(X,Y)[[t]]
\]
and the $t^0$ terms of the structure maps in $\mc A_\XX$, $\mk m_{n,0}$, coincide with the structure maps $\mk m_n$ of $\mc F_\XX$.
It remains to define the $t^{>0}$ terms of the structure maps, $\mk m_{n,k}$, $k\geq 1$.
There are two types:
\begin{enumerate}
\item
$\mk m_{0,1}$ coming from punctures
\item 
$\mk m_{n,k}$, $n\geq 2$, coming from immersed disks in $S$
\end{enumerate}

\begin{remark}
The terms of the first type are seemingly more ad-hoc than the terms of the second type, which just generalize the terms in the structure maps for $\mc F_\XX$.
Without them, however, the $A_\infty$-equations would not hold.
\end{remark}

Terms of the first type are defined as follows.
If $X\in\XX$ is an arc ending in a point $p\in M$, define $c_{p,X}$ to be the path which starts and ends at $X$ and goes $3/\mathrm{ind}(p)$ times around the boundary component of $\widehat{S}$ corresponding to $p$.
The paths $c_{p,X}$ are also characterized by the properties of starting and ending at the same point and having bi-degree $(3,1)$.
The curvature terms are then defined as
\begin{equation}\label{m0_def}
\asm_{0,1}(X):=c_{q,X}-c_{p,X}
\end{equation}
where $X\in\XX$ starts at $p\in M$ and ends at $q\in M$ with respect to the orientation of $X$ coming from the grading (see Subsection~\ref{subsec_Z2}).

To define the terms of the second type, we count immersed punctured disks.

\begin{df}
\label{def_immersed}
Let $D$ be the standard $2n$-gon with edges 
\[
e_1,f_1,e_2,f_2,\ldots,e_n,f_n 
\]
in clockwise order and $k\geq 0$ open disks with boundary circles $c_1,\ldots,c_k$ removed from its interior. 
An \textbf{immersed polygon with $k$ holes} is given by an immersion $\psi:D\to \widehat{S}$ so that 
\begin{itemize}
\item 
$X_i:=\psi|_{e_i}$ is an arc in $\XX$,
\item
$a_i:=\psi|_{f_i}$ is a boundary path,
\item
$\psi(c_i)$ is a component of $\partial\widehat{S}$ corresponding to some $p\in M$ which is covered $3/\mathrm{ind}(p)$-to-1 by $c_i$,
\end{itemize}
up to reparametrization.
The \textbf{associated cyclic sequence} of boundary paths is $a_n,a_{n-1},\ldots,a_1$.
\end{df}

Immersed polygons in the above sense can equivalently be defined in a purely combinatorial way.
To see this, note that the decomposition of $\widehat{S}$ into polygons (by cutting along arcs in $\XX$) lifts along $\psi$ to a decomposition of $D$ into polygons.
One can reconstruct $D$ and $\psi$, up to reparametrization, from the following finite data:
\begin{enumerate}
\item
For each polygon $P$ in $\widehat{S}$, the finite set $E_P$ of preimages in $D$, i.e. the polygons $\widetilde{P}$ in $D$ which map to $P$.
\item
For each arc $X\in\XX$ bounding polygons $P,Q$ a bijection $\sigma_X$ from a subset of $E_P$ to a subset of $E_Q$, where $\sigma_X(\widetilde{P})=\widetilde{Q}$ records the fact that $\widetilde{P}$ is glued to $\widetilde{Q}$ in $D$ along a lift of $X$.
\end{enumerate}

We observe the following relation between the number of holes of the polygon and the total degree of the associated sequence of boundary paths.

\begin{lemma}
\label{lem_holes_degree}
Let $a_n,\ldots,a_1$ be the cyclic sequence associated with an immersed polygon with $k$ holes, then
\[
|a_1|+\ldots+|a_n|=n-k-2,\qquad\pi(a_1)+\ldots+\pi(a_n)=k\mod 2.
\]
\end{lemma}

\begin{proof}
Let $\psi:D\to \widehat{S}$ be the immersed polygon with $k$ holes.
Consider the foliation $\psi^*\nu$ on $D$.
The Maslov index of a counterclockwise loop around each of the holes is $3$, by our conditions on $\psi$.
It follows that if $\gamma$ is a counterclockwise loop in $D$ which goes once around \textit{all} the holes, then $\mathrm{ind}(\gamma)=k+2$. 
This in turn implies, by the same argument as in the case without holes, that $\|a_1\|+\ldots+\|a_n\|=-k-2$, which is the first claim.
The relation $\pi(a_1)+\ldots+\pi(a_n)=k$ follows since $\psi^*\Sigma$ has odd monodromy around each hole, thus monodromy $k\mod 2$ along $\gamma$.
\end{proof}

As a consequence, we can deduce the finiteness of the number of immersed polygons which produce a given cyclic sequence of boundary paths.

\begin{lemma}
\label{lem_immersion_finite}
Let $a_i:X_i\to X_{i+1}$, $i\in\ZZ/n$ be a cyclic sequence of boundary paths.
Then there are finitely many immersed polygons with holes which have associated cyclic sequence $a_n,a_{n-1},\ldots,a_1$.
\end{lemma}

\begin{proof}
We already know from Lemma~\ref{lem_holes_degree} that the number, $k$, of holes is fixed by the sequence.
Call a boundary path \textit{irreducible} if it starts and ends at an arc in $\XX$, but does not meet any other arcs along the way. Equivalently, it is the edge of one of the polygons cut out by $\XX$.
For a general boundary path, the \textit{length} is the number of irreducible factors under concatenation.
In particular, the sequence $a_n,\ldots,a_1$ has some total length, $L$.
If $\psi:D\to\widehat{S}$ is an immersed polygon with holes which has the given cyclic sequence of boundary paths, then $\partial D$ has some total length, $M$, in the above sense, where we ignore parts of $\partial D$ not mapped to $\partial \widehat{S}$. 
Looking at various components of the boundary we find $M\leq L+3Ck$, where $C$ is the maximal length of a boundary circle of $\widehat{S}$, thus $3C$ the maximal length of the boundary of any of the holes in $D$.
Thus, the number of polygons in $D$ has the same a-priori bound $L+3Ck$, since an $n$-gon in $D$ increases the length of the boundary by $n$.
Finally, the combinatorial description of immersed polygons in terms of finite sets and bijection (see the paragraph after Definition~\ref{def_immersed}) shows that there are only finitely many immersed disks with holes for a given upper bound on the total number of polygons.
\end{proof}

Given the previous lemma, we can define $N(a_n,\ldots,a_1)$ to be the number of immersed polygons with holes that have associated cyclic sequence $a_n,a_{n-1},\ldots,a_1$.
In the case without holes ($k=0$) we have $N(a_n,\ldots,a_1)\in\{0,1\}$ and $N(a_n,\ldots,a_1)=1$ if and only if $a_n,\ldots,a_1$ is a disk sequence in the sense of Definition~\ref{def_diskseq0}.
To see this, note that immersed polygons without holes lift to embedded polygons in the universal cover of $\widehat{S}$. 
These are then uniquely determined by their boundary.

\begin{remark}
In fact, $N(a_n,\ldots,a_1)\in\{0,1\}$ in general, though we will not need this here.
To prove this, one shows that the polygon with holes $D$ together with the immersion $\psi:D\to\widehat{S}$ can be reconstructed from the cyclic sequence $a_n,\ldots,a_1$ of boundary paths by induction on the number of polygons in $D$. 
\end{remark}

We are now ready to define $\mk m_{n,k}$ as follows.
Whenever $\psi:D\to\widehat{S}$ is an immersed $2n$-gon with $k$ holes and associated cyclic sequence of boundary paths $a_n,\ldots,a_1$, then this contributes the following terms to $\mk m_{n,k}$:
\begin{itemize}
\item 
To $\mk m_{n,k}(a_n,\ldots,a_1)$ add a term $(-1)^{k\varepsilon_-(a_1)}1_X$, where $X$ is the arc where $a_1$ starts.
\item
For any boundary path $b$ starting where $a_n$ ends: To $\mk m_{n,k}(ba_n,\ldots,a_1)$ add a term $(-1)^{k\varepsilon_-(a_1)}b$.
\item
For any boundary path $b$ ending where $a_1$ starts: To $\mk m_{n,k}(a_n,\ldots,a_1b)$ add a term $(-1)^{|b|+k\varepsilon_-(b)}b$.
\end{itemize}
(The sign $\varepsilon_-(a)\in\ZZ/2$, where $a$ is a boundary path, was defined just before \eqref{parity_def}.)

\begin{prop}
$\mc A_\XX$ as defined above is an $A_\infty$-category over $\mathbf k[[t]]$, $|t|=-1$, $\pi(t)=1$.
\end{prop}

\begin{proof}
We will show that the terms which appear in \eqref{a_infty_eta} cancel pairwise.
There are three types of non-zero terms in $\mk m_{n,k}$:
\begin{enumerate}
\item terms in $\mk m_{0,1}$ from punctures,
\item terms in $\mk m_{2,0}$ from composition of boundary paths,
\item terms in $\mk m_{n,k}$ from immersed $2n$-gons with $k$ holes.
\end{enumerate}
Since $\mk m_2$ receives contributions both from terms of the second and third type, we write $\mk m_2=\mk m_2'+\mk m_2''$ where the first (resp. second) summand collects terms of the second (resp. third type), to avoid any ambiguity. We also write $\mk m_n=\mk m_n''$ for $n\geq 3$ for notational purposes.
The possible terms in the quadratic equation \eqref{a_infty_eta} are then
\begin{itemize}
\item
1.L: terms from $\mk m_2'(\mk m_0,a)$
\item
1.R: terms from $\mk m_2'(a,\mk m_0)$
\item
2.L: terms from $\mk m_2'(\mk m_2'(a,b),c)$
\item
2.R: terms from $\mk m_2'(a,\mk m_2'(b,c))$
\item
3.L: terms from $\mk m_2'(\mk m_n''(\ldots),\_)$
\item
3.R: terms from $\mk m_2'(\_,\mk m_n''(\ldots))$
\item
4.IN/4.OUT: terms from $\mk m_n''(\ldots,\mk m_0,\ldots)$; subcase 4.IN when the cyclic boundary path $c_{p,X}$ coming from $\mk m_0$ is part of the cyclic sequence contributing a term to $\mk m_n''$ (see Figure~\ref{fig_cancel4}), subcase 4.OUT otherwise
\item
5.IN.INT/5.IN.EXT/5.OUT.L/5.OUT.R: terms from the composition $\mk m_n''(\ldots,\mk m_2'(\_,\_),\ldots)$; subcase 5.IN when the concatenation of boundary paths is inside the cyclic sequence contributing a term to $\mk m_n''$, subcase 5.OUT otherwise; 5.IN.INT and 5.IN.EXT are distinguished as follows: The two boundary paths are concatenated by $\mk m_2'$ at one of the endpoints of some arc $X\in\XX$. 
Lift this arc to the immersed polygon $D$, then the other endpoint is either on the boundary of one of the $k$ holes of $D$ (sub-subcase 5.IN.INT, see Figure~\ref{fig_cancel4}) or on the \textit{exterior boundary} of $D$ (sub-subcase 5.IN.EXT, see Figure~\ref{fig_cancel3}).
Subcase 5.OUT.L is $\mk m_n''(\mk m_2'(\_,\_),\ldots)$ while 5.OUT.R is $\mk m_n''(\ldots,\mk m_2'(\_,\_))$.
\item
6.IN/6.OUT: terms from $\mk m_n''(\ldots,\mk m_r''(\ldots),\ldots)$; subcase 6.IN when the term coming from the inner $\mk m_r''$ is part of the cyclic sequence contributing a term to $\mk m_n''$ (see Figure~\ref{fig_cancel3}), subcase 6.OUT otherwise
\end{itemize}

We claim that the following types of terms cancel with terms of a different type:
\begin{gather*}
\mathrm{1.L\leftrightarrow 1.R,\qquad 2.L\leftrightarrow 2.R,\qquad 3.L\leftrightarrow 5.OUT.R},\\
 \mathrm{3.R\leftrightarrow 5.OUT.L,\qquad 4.IN\leftrightarrow 5.IN.INT,\qquad 5.IN.EXT\leftrightarrow 6.IN}
\end{gather*}
and the following types of terms cancel with terms of the same type:
\[
\mathrm{4.OUT, \qquad 6.OUT}
\]
The remainder of the proof consists of carefully checking each of these cancellations.

\paragraph*{\textbf{1.L cancels 1.R:}}
As a special case of \eqref{a_infty_eta} we need to check that
\[
\mk m_{2,0}(a,\mk m_{0,1})+(-1)^{\pi(a)}\mk m_{2,0}(\mk m_{0,1},a)=0.
\]
If $a$ is a boundary path from an arc $X$ to an arc $Y$ then the 1.R term in the first summand above is of the form $\pm ac_{p,X}$ while the 1.L term in the second summand above is of the form $\pm c_{p,Y}a$.
These are the same boundary path. 
Moreover, the first term comes with a sign $\varepsilon_-(a)+1+1$ (where $\varepsilon_-(a)+1$ comes from $\mk m_0$ and $1$ comes from $\mk m_2$), while the second comes with a sign $\varepsilon_+(a)+1+|a|+\pi(a)$ (where $\varepsilon_+(a)+1$ comes from $\mk m_0$, $|a|$ comes from $\mk m_2$, and $\pi(a)$ comes from the above equation) and these are opposite by \eqref{parity_def}, so the terms cancel.

\paragraph*{\textbf{2.L cancels 2.R:}}
Up to signs, this is just associativity of concatenation of boundary paths. Signs cancel since we know this already for the wrapped Fukaya category.

\paragraph*{\textbf{3.L cancels 5.OUT.R:}}
Here we have an immersed $2n$-gon with $k$ holes $\psi:D\to\widehat{S}$ with sequence of boundary paths $a_n,\ldots,a_1$, and boundary paths $b$ and $c$ so that the concatenation $a_1bc$ is another boundary path.
This contributes a term $\pm bc$ to
\[
\mk m_{2,0}(\mk m_{n,k}(a_n,\ldots,a_1b),c)\qquad \text{and}\qquad \mk m_{n,k}(a_n,\ldots,a_2,\mk m_{2,0}(a_1b,c)).
\]
The first term comes with a sign
\[
|b|+k\varepsilon_-(b)+|c|+(k+1)(|c|+1)+k(\pi(c)+1)
\]
which is opposite to the sign 
\[
|c|+|b|+|c|+k\varepsilon_-(c)
\]
of the second term.

\paragraph*{\textbf{3.R cancels 5.OUT.L:}}
This is similar to the previous case except that now $cba_n$ is a boundary path instead of $a_1bc$.
We get contributions $\pm cb$ to 
\[
\mk m_{n,k}(\mk m_{2,0}(c,ba_n),\ldots,a_1)\qquad \text{and}\qquad \mk m_{2,0}(c,\mk m_{n,k}(ba_n,\ldots,a_1)).
\]
The first term comes with a sign
\[
|b|+|a_n|+k\varepsilon_-(a_1)+|a_1|+\ldots+|a_{n-1}|+n-1
\]
which is opposite to the sign
\[
|b|+k+k\varepsilon_-(a_1)
\]
of the second term.

\paragraph*{\textbf{4.IN cancels 5.IN.INT:}}
The main observation is that if $a_n,\ldots,a_{i+1},c,a_i,\ldots,a_1$ is the cyclic sequence of boundary paths associated with an immersed polygon with holes $\psi:D\to \widehat{S}$, and $c=c_{p,X}$ is a boundary path winding once around a component of $\partial\widehat{S}$ (thus giving a term of type 4.IN), then we get another $\psi':D'\to\widehat{S}$ where two edges of $D$ mapping to $X$ have been glued to produce $D'$ with one more hole than $D$ (see Figure~\ref{fig_cancel4}). Then $\psi'$ has associated sequence of boundary paths $a_n,\ldots,a_{i+1}a_i,\ldots,a_1$ where $a_{i+1}$ and $a_i$ have been concatenated (thus giving a term of type 5.IN.INT).

\begin{figure}
\centering
\includegraphics[scale=.3]{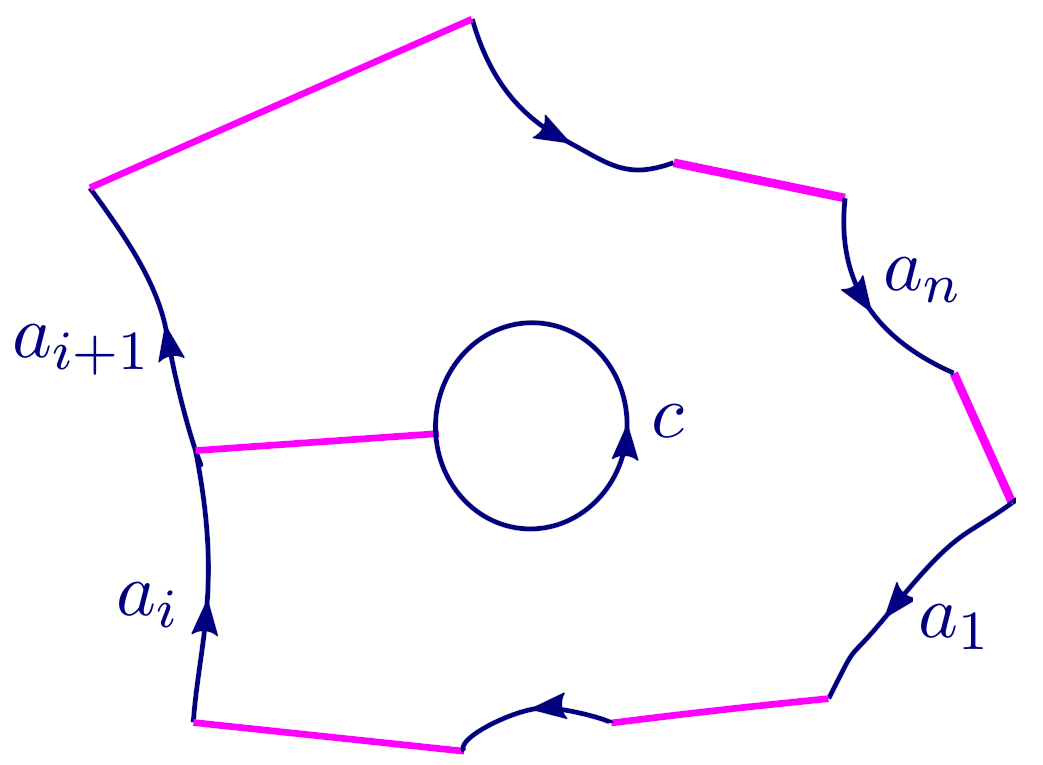}
\caption{Contribution to terms of type 4.IN and 5.IN.INT.}
\label{fig_cancel4}
\end{figure}

The 4.IN term contributes $\pm b$ to
\[
\mk m_{n+1,k-1}(a_n,\ldots,a_{i+1},\mk m_{0,1},a_i,\ldots,a_1b)
\]
and 5.IN.INT term also contributes $\pm b$ to
\[
\mk m_{n-1,k}(a_n,\ldots,a_{i+2},\mk m_{2,0}(a_{i+1},a_i),a_{i-1},\ldots,a_1b)
\] 
and similarly with $b$ on the left end instead of the right end.
The first term comes with a sign
\[
\underbrace{\pi(a_1)+\ldots+\pi(a_i)}_{\mathclap{|a_1|+\ldots+|a_i|+\varepsilon_-(a_1)+\varepsilon_+(a_i)+i-1}}+1+\varepsilon_+(a_i)+(k-1)\varepsilon_-(a_1)
\]
which is opposite to the sign 
\[
|a_i|+\ldots+|a_i|+i-1+|a_i|+k\varepsilon_-(a_1)
\]
of the second term, so the two cancel.

\paragraph*{\textbf{5.IN.EXT cancels 6.IN:}}
The main observation is that if $\psi:D\to\widehat{S}$ is an immersed $2n$-gon with $r$ holes and sequence of boundary paths $a_n,\ldots,a_1$, $\psi':D'\to\widehat{S}$ an immersed $2k$-gon with $s$ holes and sequence of boundary paths $b_k,\ldots,b_1$ and the arc $X$ where $a_{i-1}$ ends and $a_i$ starts is the same as the arc where $b_k$ ends and $b_1$ starts, but with opposite induced orientation from $\partial D$ and $\partial D'$ (see Figure~\ref{fig_cancel3}), then we can glue $D$ and $D'$ along this arc to obtain $\psi'':D''\to\widehat{S}$ which has $r+s$ holes and associated sequence of boundary paths 
\[
a_n,\ldots,a_{i+1},a_ib_k,b_{k-1},\ldots,b_2,b_1a_{i-1},a_{i-2},\ldots,a_1.
\]

\begin{figure}
\centering
\includegraphics[scale=.3]{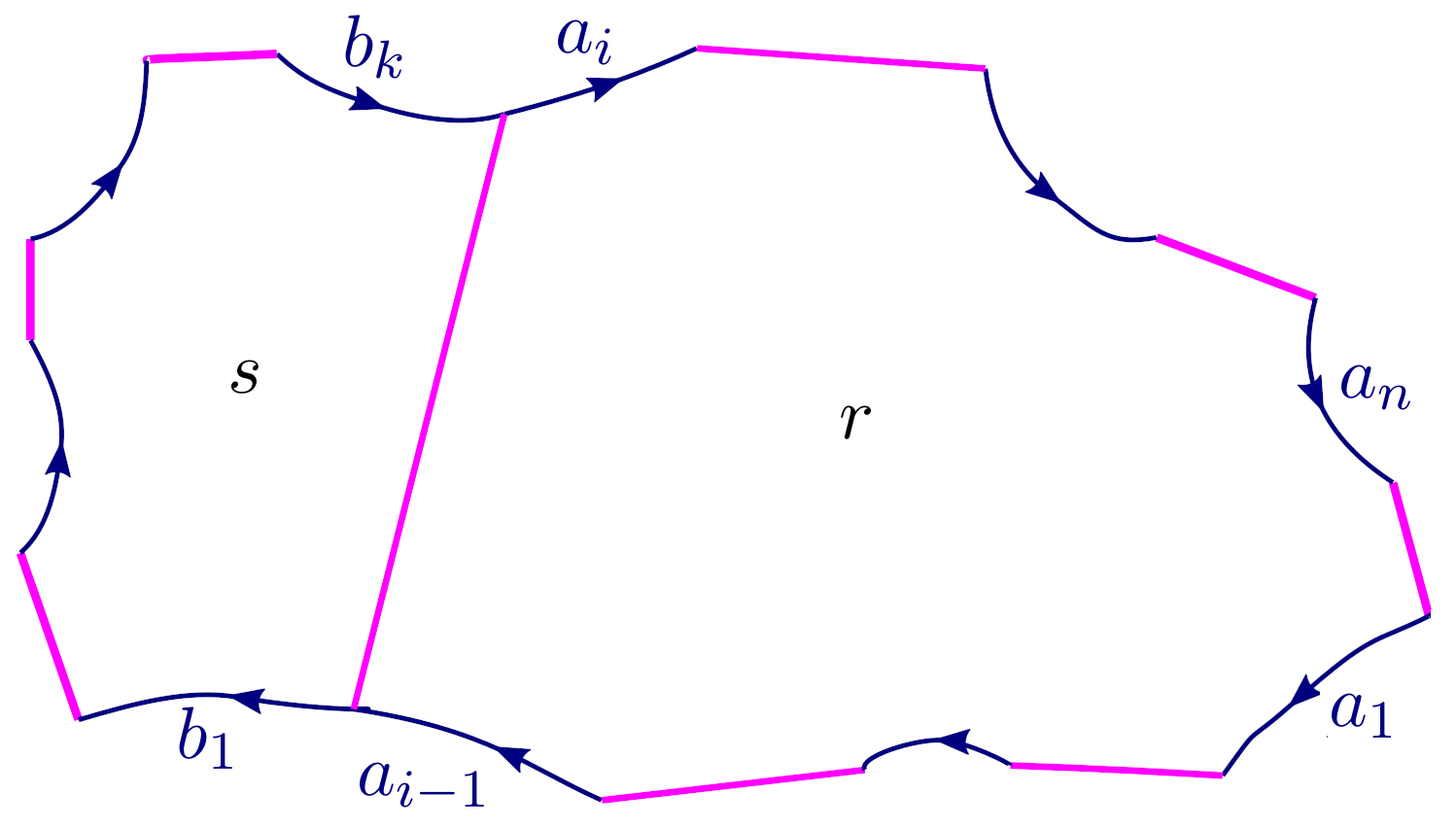}
\caption{Contribution to terms of type 5.IN.EXT and 6.IN.}
\label{fig_cancel3}
\end{figure}

The immersed polygons $D$ and $D'$ contribute a term $\pm c$ of type 6.IN to 
\[
\mk m_{n,r}(a_n,\ldots,a_{i+1},\mk m_{k,s}(a_ib_k,\ldots,b_1),a_{i-1},\ldots,a_1c)
\]
while $D''$ contributes a term $\pm c$ of type 5.IN.EXT to
\[
\mk m_{n+k-2,r+s}(a_n,\ldots,a_ib_k,\ldots,\mk m_{2,0}(b_1,a_{i-1}),\ldots,a_1c)
\]
and similarly with $c$ at the other end.
The first comes with a sign
\begin{align*}
&(s+1)(|c|+|a_1|+\ldots+|a_{i-1}|+i-1)+s(\underbrace{\pi(c)+\pi(a_1)+\ldots+\pi(a_{i-1})+1}_{\mathclap{|c|+|a_1|+\ldots+|a_{i-1}|+i-1+\varepsilon_-(c)+\varepsilon_+(a_{i-1})}}) \\
&+|c|+s\varepsilon_-(b_1)+r\varepsilon_-(c)
\end{align*}
which is opposite to the sign
\[
|c|+|a_1|+\ldots+|a_{i-2}|+i-2+|a_{i-1}|+|c|+(r+s)\varepsilon_-(c)
\]
of the second term.

\paragraph*{\textbf{4.OUT cancels itself:}}
Here we have an immersed $2n$-gon with $k$ holes $\psi:D\to\widehat{S}$ giving a cyclic sequence $a_n,\ldots,a_1$ and the boundary path $b$ complementary to $a_1$ in sense that the concatenated paths are $ba_1=c_{p,X}$ and $a_1b=c_{p,Y}$ for some $p\in M$ and arcs $X,Y$.
This gives a contribution of $\pm b$ to 
\[
\mk m_{n,k}(\mk m_{0,1},a_n,\ldots,a_2)\qquad\text{and}\qquad \mk m_{n,k}(a_n,\ldots,a_2,\mk m_{0,1}).
\]
The first term comes with a sign
\[
\varepsilon_-(a_1)+1+k\varepsilon_-(a_2)+\underbrace{\pi(a_2)+\ldots+\pi(a_n)}_{\pi(a_1)+k}+1
\]
which is opposite to the sign
\[
\varepsilon_+(a_1)+1+|b|+k\varepsilon_-(b)+1
\]
of the second term, so the two cancel.

\paragraph*{\textbf{6.OUT cancels itself:}}
Here we have an immersed $2n$-gon with $r$ holes $\psi:D\to\widehat{S}$ with sequence of boundary paths $a_n,\ldots,a_1$, and an immersed $2k$-gon with $s$ holes $\psi':D'\to\widehat{S}$ with sequence of boundary paths $c_k,\ldots,c_1$ and a boundary path $b$ which starts where $c_k$ ends and ends where $a_1$ starts, i.e. so that the  concatenation $a_1bc_k$ is a boundary path.
This contributes a term $\pm b$ to 
\begin{gather*}
\mk m_{n,r}(a_n,\ldots,a_2,\mk m_{k,s}(a_1bc_k,c_{k-1},\ldots,c_1))\\
\text{and}\qquad \asm_{k,s}(\mk m_{n,r}(a_n,\ldots,a_2,a_1bc_k),c_{k-1},\ldots,c_1)
\end{gather*}
where the first comes with a sign
\[
s+s\varepsilon_-(c_1)+|b|+r\varepsilon_-(b)
\]
which is opposite to the sign 
\begin{gather*}
r(\underbrace{\|c_1\|+\ldots+\|c_{k-1}\|+\pi(c_1)+\ldots+\pi(c_{k-1})+1}_{\varepsilon_-(c_1)+\varepsilon_+ (c_{k-1})})\\ 
+\underbrace{|c_1|+\ldots+|c_{k-1}|+k-1}_{|c_k|+s+1} +r\varepsilon_-(c_k)+s\varepsilon_-(c_1)+|b|+|c_k|
\end{gather*}
of the second term.
\end{proof}

\subsection{Change of arc system}
\label{subsec_changearc}

In the previous subsection we defined an $A_\infty$-category, $\mc A_\XX$, over $\mathbf k[[t]]$ depending on a surface $S$ with punctures $M\subset S$, grading structure $\nu$, and arc system $\XX$.
This category is curved and we get an uncurved $A_\infty$-category by passing to torsion modules (i.e. objects in the triangulated completion of the image of the functor from $\mc F_\XX$, see Subsection~\ref{subsubsec_basechange}).
The purpose of this subsection is to show that $\mathrm{Tors}(\mc A_\XX)$ is independent of the choice of arc system $\XX$, as in the case of the wrapped Fukaya category.

\subsubsection*{Change of grading}
First we want to relate $\mc A_\XX$ and $\mc A_{\XX'}$ if the arc system $\XX$ and $\XX'$ differ only in the choice of grading.
For this it is convenient to consider a larger category, containing both as full subcategories.
Namely, consider the category $\mc A_{\XX,\mathrm{shifts}}$ which has an object for each arc in $\XX$ together with a choice of grading, i.e. a lift to both $\widetilde{\PP(TS)}$ and $\Sigma$.
Thus, $\mc A_{\XX,\mathrm{shifts}}$ has a $\ZZ\times\ZZ/2$-torsor of objects for every arc $X\in\XX$.
The definition of morphisms and structure maps works as in the case of $\mc A_\XX$, the only difference being that instead of formally adding identity maps of objects one adds (to the basis of morphism) maps 
\[
\sigma_{X,m,n}:X\to X[m,n]
\]
of bi-degree $(m,n)$, generalizing $1_X=\sigma_{X,0,0}$, and satisfying
\begin{align*}
\mk m_{2,0}\left(\sigma_{Y,m,n},a\right)&:=(-1)^{|a|+m\varepsilon_+(a)}a \\
\mk m_{2,0}\left(a,\sigma_{X,m,n}\right)&:=(-1)^{m(\varepsilon_-(a)+1)}a \\
\mk m_{2,0}\left(\sigma_{X,m,n},\sigma_{X,r,s}\right)&:=(-1)^r\sigma_{X,m+r,n+s}
\end{align*}
where $a$ is a boundary path from $X$ to $Y$.
A straightforward checking of signs shows that this gives a curved $A_\infty$-category over $\mathbf k[[t]]$.
The category $\mc A_{\XX,\mathrm{shifts}}$ is then, by construction, closed under shifts of cohomological degree and parity in the sense that the abstract shift of any object is representable in that category.
This also shows that $\mc A_{\XX,\mathrm{shifts}}$ is isomorphic to the formal completion of $\mc A_\XX$ under shifts in bi-degree.

\begin{prop}
The inclusion functor $\mc A_\XX\to\mc A_{\XX,\mathrm{shifts}}$ induces a quasi-equivalence
$\mathrm{Tors}(\mc A_\XX)\to\mathrm{Tors}(\mc A_{\XX,\mathrm{shifts}})$ of $A_\infty$-categories over $\mathbf k$.
\end{prop}

\begin{proof}
This follows from the equivalence of $\mathrm{Add}(\mc A_\XX)$ and $\mathrm{Add}(\mc A_{\XX,\mathrm{shifts}})$.
\end{proof}

\subsubsection*{Adding a single arc}

\begin{prop}
Suppose an arc system $\XX'$ is obtained from an arc system $\XX$ by adding a single arc $X$ (bisecting some polygon).
Then the inclusion functor $\mc A_\XX\to\mc A_{\XX'}$ induces a quasi-equivalence of categories of torsion objects.
\end{prop}

\begin{proof}
We know from~\cite{hkk} that $X$, as an object of $\mc F_{\XX'}$, is isomorphic to an object in $\mathrm{Tw}^+(\mc F_\XX)$.
Hence, $G(X)\in\mathrm{Tors}(\mc A_{\XX'})$ is isomorphic to an object in $\mathrm{Tors}(\mc A_\XX)$, thus the two categories are quasi-equivalent. 
\end{proof}

\begin{coro}
Up to canonical equivalence, the $A_\infty$-category $\mathrm{Tors}(\mc A_\XX)$ is independent of the choice of graded arc system $\XX$.
\end{coro}

\begin{df}
Given a punctured surface $(S,M)$ with $3$-compatible grading $\nu$ and a choice of ground field $\mathbf k$, let $\mc C(S,\nu):=\mathrm{Tors}(\mc A_\XX)$. 
This is a $\ZZ$-graded $A_\infty$-category over $\mathbf k$, independent of the arc system $\XX$ by the above corollary.
\end{df}

In the special case where $\nu$ is the horizontal foliation of a quadratic differential with simple zeros and $n$ simple poles, and $g$ is the genus of $S$, then we write $\mc C_{g,n}$ for the category $\mc C(S,\nu)$. 
We need to show that this category is well-defined, up to quasi-equivalence.

\begin{df}
A 3-compatible grading on a punctured surface $(S,M)$ is \textbf{geometric} if it is of the form $\mathrm{hor}(\varphi)$ for some quadratic differential $\varphi$ on $S$, meromorphic with respect to some complex structure, with $\mathrm{Zeros}(\varphi)\cup\mathrm{Poles}(\varphi)=M$.
All poles and zeros of $\varphi$ are necessarily simple by 3-compatibility. 
\end{df}

\begin{prop}
\label{prop_geomgrading}
All geometric gradings on decorated surface $(S,M)$ belong to the same component of the space of sections $\Gamma(S\setminus M;\PP(TS))$.
\end{prop}

\begin{proof}
Let $g=g(S)$, $n$ such that $|M|=4g-4+2n$, and $\mathcal M_{g,n}$ the Riemann moduli space (orbifold) of genus $g$ surfaces with $n$ marked points.
The cotangent space $T^*_X\mathcal M_{g,n}$ is identified with the space of quadratic differentials on $X$ with at most simple poles at the marked points (see e.g.~\cite{gardiner}).
Quadratic differentials away from the codimension one discriminant locus $\Delta\subset T^*\mathcal M_{g,n}$ have simple poles at the marked points and simple zeros.
It follows that the space of pairs $(J,\varphi)$ where $J$ is a complex structure on $S$ and $\varphi$ a quadratic differential as in the definition of a geometric grading, is connected.
(This argument fails, and its conclusion is generally false, for lower dimensional strata of $T^*\mathcal M_{g,n}$.)
\end{proof}

\subsection{3CY structure}
\label{subsec_3cy}

\subsubsection*{Properness}
A $\mathbf k$-linear triangulated category is \textit{proper} (over $\mathbf k$) if it has a generator, $E$, with $\dim_{\mathbf k}\mathrm{Ext}^\bullet(E,E)<\infty$.
Our first task in this subsection is to show that $\mc C(S,\nu)$ is proper.
In fact, we can find an explicit basis of $\mathrm{Ext}^\bullet$ in terms of boundary paths.

\begin{prop}
\label{prop_3cybasis}
The categories $\mc C(S,\nu)$ are proper.
Moreover, for an arc system $\XX$ on $S$, a basis of $\mathrm{Ext}^\bullet_{\mc A(\XX)}(GX,GY)$ is given by:
\begin{enumerate}
\item
Boundary paths, $b$, from $X$ to $Y$ which are ``short'' in the following sense: If $b$ is on a component of $\partial\widehat{S}$ mapping to a point $p\in M$, then $b$ should travel strictly less than $3/\mathrm{ind}(p)$ times ($=1$ for a zero, $=3$ for a pole) around the boundary circle of $\widehat{S}$.
Equivalently, $b$ is a proper factor of some $c_{p,X}$.
\item
If $X=Y$, an element of degree 3 corresponding to both $c_{p,X}$ and $c_{q,X}$, where $p,q$ are the endpoints of $X$.
\end{enumerate}
\end{prop}

\begin{proof}
The category $\mc C(S,\nu)$ is generated, by construction, by $E:=\bigoplus_{X\in\XX}GX$, so it suffices to show that $\dim_{\mathbf k}\mathrm{Ext}^\bullet_{\mc C(S,\nu)}(GX,GY)<\infty$ for any $X,Y\in\XX$.
By adjunction and the formula for the cone over the counit (Proposition~\ref{prop_adjunction}), $\mathrm{Ext}^\bullet_{\mathrm{Tw}_{\ZZ/2}(\mc A_\XX)}(GX,GY)$ is the cohomology of the complex
\begin{equation}\label{2stepcx}
\Hom_{\mc F_\XX}(X[2,1],Y)\oplus\Hom_{\mc F_\XX}(X,Y),\qquad (a,b)\mapsto(0,-\mk m_{2,0}(a,\mk m_{0,1})).
\end{equation}
If we consider the differential as a map from the first summand above to the second one, then its kernel is trivial and its image consists of multiples of $c_{p,X}-c_{q,X}$, in particular all paths which contain some $c_{p,X}$ as a proper factor.
It follows that the cokernel, which coincides with the cohomology of the complex, has the basis described in the statement of the proposition.
\end{proof}

The following proposition gives an explicit description of the endomorphism algebras of non-closed arcs.

\begin{prop}
\label{prop_arcext}
Let $X$ be an arc, identified with the object $GX\in\mc C(S,\nu)$.
\begin{enumerate}
\item
If $X$ connects distinct zeros, then 
\[
\Ext^\bullet(X,X)\cong H^\bullet(S^3;\mathbf k)
\] 
i.e. $X$ is a spherical object.
\item
If $X$ connects a zero $p$ with a pole $q$, then 
\[
\Ext^\bullet(X,X)\cong\mathbf k[x]/x^4
\]
where $x$ has bi-degree $(1,1)$ and corresponds to the cyclic path going once around $q$. 
\item
If $X$ is an arc connecting two distinct poles, $p$ and $q$, then 
\[
\Ext^\bullet(X,X)\cong \mathbf k[x,y]/(xy,x^3-y^3)
\]
where $x$ and $y$ have bi-degree $(1,1)$ and correspond to the cyclic paths around $p$ and $q$, respectively. 
\end{enumerate}
In all cases, the isomorphism is one of $A_\infty$-algebras, i.e. $\Ext^\bullet(X,X)$ is formal (without higher order terms).
Commutativity is understood in the $\ZZ\times\ZZ/2$-graded sense.
\end{prop}

\begin{proof}
Choose an arc system $\XX$ containing $X$.
We have already computed $\Ext^\bullet(X,X)$ as graded vector space.
The curved $A_\infty$-algebra has underlying algebra
\[
\Hom_{\mc A_{\XX}}(X,X)=\mathbf k[t,x,y]/(xy)
\]
were $x$ and $y$ have bi-degree $(1,1)$ or $(3,1)$, depending on the types of endpoints, and $t$ has bi-degree $(-1,1)$ as usual.

We claim that there are no immersed polygons, possibly with holes, which could contribute to the $A_\infty$-structure on $\Hom_{\mc A_{\XX}}(X,X)$.
To see this, suppose first that all $p\in M$ have index 3 (i.e. are zeros). 
Then $g(S)\geq 2$ by the Poincar\'e--Hopf index formula.
Suppose $\psi:D\to \widehat{S}$ is an immersed polygon which contributes to $\Hom_{\mc A_{\XX}}(X,X)$, then $\partial D$ maps to $X\cup \partial\widehat{S}\subset \widehat{S}$, so $\psi$ restricts to a topological covering over $\widehat{S}\setminus X$. But this contradicts $g(D)=0$.
In the general case, if there are some $p\in M$ with $\mathrm{ind}(p)=1$ (i.e. poles), we argue as follows.
Construct a $3:1$ cover, $S'$, of $\widehat{S}$ which restricts to a connected $3:1$ covering $S^1\to S^1$ over any boundary component of $\widehat{S}$ corresponding to a pole and to a trivial $3:1$ covering of $S^1$ over any boundary component corresponding to a zero. 
Then then lift of $\nu$ to $S'$ has index 3 around any boundary component of $S'$, so $g(S')\geq 2$.
Furthermore, any immersed polygon in our sense lifts to $S'$, so we have essentially reduced to the previous case, except that $X$ is replaced by a triple of arcs (the preimage of $X$ in $S'$) which may meet in one or both endpoints, which does not change the argument.

We conclude that $\Hom_{\mc A_{\XX}}(X,X)$ is just a graded algebra and that the quotient map from $\Hom_{C(S,\nu)}(GX,GX)$ to $\Ext^\bullet_{C(S,\nu)}(GX,GX)$, constructed in the proof of Proposition~\ref{prop_3cybasis}, is a quasi-isomorphism to the graded algebra
\[
\mathbf k[x,y]/(xy,x^{3/|x|}-y^{3/|y|})
\]
which specializes to the ones in the proposition, depending on the types of the endpoints.
\end{proof}

\begin{prop}
\label{prop_loopext}
Let $X\in \mathrm{Tw}_{\ZZ/2}(\mc F_\XX)$ be a spherical object, i.e. 
\[
\mathrm{Ext}^\bullet(X,X)=H^\bullet(S^1;\mathbf k),
\]
then 
\begin{equation}
\label{extS1S2}
\mathrm{Ext}^\bullet(GX,GX)\cong H^\bullet(S^1\times S^2;\mathbf k)
\end{equation}
as graded algebras.
Assume moreover that $\mathrm{Ext}^1(X,X)$ is even with respect to the additional $\ZZ/2$-grading, then $\mathrm{Ext}^\bullet(GX,GX)$ is formal, i.e. the above isomorphism is one of $A_\infty$-algebras.

In particular, these assumptions are satisfied if $X$ is an embedded closed curve, equipped with grading and rank one local system of $\mathbf k$-vector spaces.
\end{prop}

\begin{proof}
By Proposition~\ref{prop_adjunction}, $\mathrm{Ext}^\bullet(GX,GX)=\mathrm{Ext}^\bullet(FGX,X)$ and there is a long exact sequence
\[
\ldots \longrightarrow \mathrm{Ext}^i(X,X)\longrightarrow\mathrm{Ext}^i(FGX,X)\longrightarrow\mathrm{Ext}^i(X[2,1],X)\longrightarrow\ldots
\]
which already implies \eqref{extS1S2} on the level of graded vector spaces.
To see that the isomorphism is one of algebras note that $\mathrm{Ext}^2(GX,GX)$ is odd with respect the the additional $\ZZ/2$-grading, since this comes from the identity in $\mathrm{Ext}^0(X,X)$ shifted by an odd amount. 
In particular, the square of any element in $\mathrm{Ext}^1(GX,GX)$ must vanish.
Under the evenness assumption on $\mathrm{Ext}^1(X,X)$, $\mk m_n(\alpha,\ldots,\alpha)=0$ for any $\alpha\in\mathrm{Ext}^1(GX,GX)$ by the same reasoning.
By the classification of 3CY $A_\infty$-algebras in~\cite{ks} in terms of quivers with potential, this shows that $\mathrm{Ext}^\bullet(GX,GX)$ is classified by the one-loop quiver with vanishing potential, thus formal.

Suppose that $X$ is an embedded loop. 
It is explained in~\cite[Subsection 4.1]{hkk} how to associate to a loop, together with a choice of monodromy $\lambda\in\mathbf k^\times$, an object in $\mc F(S\setminus M,\nu)$, and that this object is spherical. 
Essentially, $X$ is represented in $\mc F(S\setminus M,\nu)$ by a twisted complex built from two arcs $Y,Z$ which intersect at both endpoints, so $\dim\mathrm{Hom}^1(Y,Z)=2$. 
The arcs $Y,Z$ are chosen so that their concatenation is isotopic to $X$, i.e. they are two ``halves'' of $X$. 
Concatenation is represented algebraically by forming the twisted complex $(Y\oplus Z;(\lambda,1))$ where $(\lambda,1)\in\mathrm{Hom}^1(Y,Z)$ is the twisting cochain and we use the natural basis coming from the pair of intersection points of $Y$ and $Z$. 
One can check that $\mathrm{Hom}^1(Y,Z)$ is even with respect to the additional $\ZZ/2$-degree, hence so is $\mathrm{Ext}^1$. 
\end{proof}

\subsubsection*{3CY structure}
An $n$-dimensional Calabi--Yau structure on a proper $A_\infty$-category $\mc A$ over $\mathbf k$ is given by a chain map
\[
\tau:CC_\bullet(\mc A)\to \mathbf k[-n]
\]
where $CC_\bullet(\mc A):=CC_\bullet(\mc A,\mc A)[u^{\pm 1}]/uCC_\bullet(\mc A,\mc A)[u]$ is the cyclic complex with differential $b+uB$ where $b$ is the Hochschild differential and $B$ is the Connes differential, such that the maps 
\[
\mc \Hom_{\mc A}^\bullet(X,Y)\otimes \Hom_{\mc A}^\bullet(Y,X)\to\mathbf k[-n], \qquad a\otimes b\mapsto\tau(\asm_2(a,b))
\] 
induce non-degenerate pairings
\begin{equation} \label{cy_pairing}
\mathrm{Ext}^i(X,Y)\otimes \mathrm{Ext}^{n-i}(Y,X)\to\mathbf k
\end{equation}
for all $i\in\ZZ$.
We refer to \cite{ks_ainfty,cho_lee} for details and the equivalence of this definition with others.

Our strategy for constructing a 3CY structure on $\mc C(S,\nu)$ is to first define a map $\tau_0:CC_\bullet(\mc F_\XX)\to\mathbf k[-3]$, then compose it with the map 
\[
CC_\bullet(\mathrm{Tors}(\mc A_\XX))\to CC_\bullet(\mathrm{Tw}^+(\mc F_\XX))\cong CC_\bullet(\mc F_\XX),
\]
where the first map is induced by the functor $F:\mathrm{Tors}(\mc A_\XX)\to \mathrm{Tw}^+(\mc F_\XX)$ as in Subsection~\ref{subsubsec_basechange} and the second map comes from Morita equivalence of $\mc F_\XX$ with $\mathrm{Tw}^+(\mc F_\XX)$, to obtain a map $\tau:CC_\bullet(\mathrm{Tors}(\mc A_\XX))\to\mathbf k[-3]$ for which non-degeneracy can then be checked.

The map $\tau_0$ has a very simple form and vanishes on all of $CC_\bullet(\mc F_\XX)$ except for the summand $\bigoplus_{X\in\XX} \Hom^\bullet(X,X)$ where it is defined on basis elements by $\tau(a):=1$ if $a=c_{p,X}$ for some $p\in M$, and $\tau(a):=0$ otherwise.

\begin{lemma}
The map $\tau_0$ defined above vanishes on the image of the Hochschild differential, hence is a chain map.
\end{lemma}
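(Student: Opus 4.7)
The plan is to show that for every cyclic chain $\alpha$, the expression $\tau_0(b\alpha)$ vanishes. Since $\tau_0$ is supported on the length-zero part $\bigoplus_X \Hom^3(X,X)$ of the cyclic complex, only the $CC_0$-component of $b\alpha$ matters. That component is obtained by applying a single $A_\infty$-operation $\asm_n$ of $\mc A_0$ to all inputs of $\alpha = a_{n-1}\otimes\cdots\otimes a_0 \in CC_{n-1}(\mc A_0)$, summed over the $n$ cyclic rotations with the Koszul signs built into the Hochschild differential. The strategy is to show that these $n$ contributions pair up to give zero after applying $\tau_0$.

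In $\mc A_0 = \mc A(\XX)_0$ the surviving structure constants are $\asm_{2,0}$ (concatenation of boundary paths) and the higher $\asm_{n,0}$ coming from degree-zero disk sequences. By the explicit formulas for $\asm_{n,0}$, the output is either an identity morphism $1_X$, on which $\tau_0$ vanishes, or a boundary path $b$ extracted from an ``extended'' input. Hence $\tau_0(\asm_n(\dots))$ can only be nonzero when one of the inputs is an extension of a disk-sequence boundary path by a full closed loop $c_{p,X}$, or, in the $n = 2$ case, when the two inputs compose to a full loop.

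For $n = 2$, the cyclic chain $\alpha = a_1\otimes a_0$ with $a_1 a_0 = c_{p,X}$ produces two terms $\pm a_1 a_0$ and $\pm a_0 a_1 = \pm c_{p,Y}$, each evaluated by $\tau_0$ to $\pm 1$, and a direct sign check---essentially the standard statement that $\tau_0$ is a trace on the underlying associative multiplication $\asm_{2,0}$---shows the two contributions have opposite signs. For $n \geq 3$, a given extended disk sequence can be presented cyclically with the extension attached either at the ``head'' or at the ``tail'' of the linear ordering; these two presentations correspond to two of the $n$ cyclic rotations of $\alpha$, and they produce the same loop $c_{p,X}$ with opposite signs, using the identities $|a_1|+\cdots+|a_n| = n-2$ and $\pi(a_1)+\cdots+\pi(a_n) = 0$ for degree-zero disk sequences, together with the parity identity $\pi(a)+|a| \equiv \varepsilon_+(a) + \varepsilon_-(a) \pmod 2$.

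The main obstacle is the sign bookkeeping for general $n \geq 3$. The cleanest route is to observe that these cancellations are the $\eta = 0$ shadow of those already verified in the proof of the $A_\infty$-relations for $\mc A(\XX)$ in Subsection~\ref{subsec_disk} (notably the ``Terms with $\asm_{n,k}$ and $\asm_{0,1}$'' case): there, insertion of the curvature $\asm_{0,1}(X) = c_{q,X} - c_{p,X}$ was cancelled against the concatenation of two adjacent arcs of a disk sequence via $\asm_{2,0}$, and the very same combinatorics, read on $\mc A_0$ after extracting the coefficient of $c_{p,X}$ in a length-zero Hochschild cochain, supplies the required pairings in the Hochschild cocycle condition. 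Concretely, the two terms whose sign match was checked in \eqref{a_infty_eta} for $(n+1,k-1)$ and $(n-1,k)$ with $k=1$ re-appear here as the two cyclic rotations of a Hochschild chain that detect an extended disk sequence, so no new sign computation is required beyond translating the conventions.
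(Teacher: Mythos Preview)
Your overall structure matches the paper's proof: you correctly reduce to the $CC_0$-component, separate the $n=2$ case (trace property for $\asm_{2,0}$) from the $n\ge 3$ case (disk sequence with a full-loop extension), and identify that the relevant terms come in cancelling pairs. The paper argues in exactly this way.

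There are two issues. First, a minor inaccuracy: the two cyclic rotations in the $n\ge 3$ case do not produce ``the same loop $c_{p,X}$''. Writing the chain as $a_n\otimes\cdots\otimes a_2\otimes(a_1ba_1)$ with $b$ complementary to $a_1$, one rotation yields $\asm_n(a_n,\ldots,a_2,a_1\cdot(ba_1))=\pm ba_1=c_{p,X_0}$ while the other yields $\asm_n((a_1b)\cdot a_1,a_n,\ldots,a_2)=\pm a_1b=c_{p,X_1}$. These are distinct loops; the point is that $\tau_0$ takes the value $1$ on both, so the conclusion is unaffected.

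Second, and more substantively, your final paragraph does not work as stated. The cancellation in the $A_\infty$-relation check ``Terms with $\asm_{n,k}$ and $\asm_{0,1}$'' pairs a term of the form $\asm_{n+1,k-1}(\ldots,\asm_{0,1},\ldots)$ with one of the form $\asm_{n-1,k}(\ldots,\asm_{2,0}(\ldots),\ldots)$: an insertion of curvature against a contraction by $\asm_2$. The Hochschild cocycle check instead pairs two applications of the \emph{same} $\asm_n$ to two different cyclic rotations of a single chain. The inputs, the arities, and the Koszul signs are genuinely different, and there is no formal mechanism by which the $A_\infty$-relation sign identity ``re-appears'' as the Hochschild one. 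You must carry out the sign check directly, as you begin to do in your third paragraph. Using $\|a_1ba_1\|=|a_1|+2$ and $\|a_2\|+\cdots+\|a_n\|=(n-2-|a_1|)-(n-1)\equiv |a_1|+1\pmod 2$, the cyclic Koszul sign for the second rotation is $(|a_1|+2)(|a_1|+1)\equiv 0$, and comparing with the $(-1)^{|ba_1|}=-1$ from the first term gives the required opposite signs. This is what the paper does (tersely); your appeal to Subsection~\ref{subsec_disk} should be replaced by this short computation.
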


\begin{proof}
Using the explicit formula for the Hochschild differential (see e.g. \cite{seidel_hochschild,cho_lee}) this means that $\tau_0$ should vanish on elements of the form
\[
\sum_{i=1}^n(-1)^{(\|a_i\|+\ldots+\|a_1\|)(\|a_n\|+\ldots+\|a_{i+1}\|)}\asm_n(a_i,\ldots,a_1,a_n,\ldots,a_{i+1})
\]
where $a_1,\ldots,a_n$ is a cyclic sequence of morphisms.

The first case to check is when $a_1,a_2$ are boundary paths such that both compositions $a_1a_2$ and $a_2a_1$ are non-zero and $|a_1|+|a_2|=3$. 
In this case
\[
\asm_2(a_1,a_2)+(-1)^{\|a_1\|\|a_2\|}\asm_2(a_2,b_2)=\pm(a_1a_2-a_2a_1)
\]
for which $\tau_0(a_1a_2-a_2a_1)=1-1=0$.
The second case to check is when $a_1,\ldots,a_n$ is a disk sequence and $b$ is a complementary boundary path to $a_1$ in the sense that $\tau_0$ is non-zero on $ba_1$ and $a_1b$. Such $b$ is possibly a constant path (identity) and unique if it exists.
Then
\begin{align*}
\asm_n(a_n,\ldots, a_2,a_1ba_1)&+(-1)^{(\|a_1ba_1\|)(\|a_2\|+\ldots+\|a_n\|)}\asm_n(a_1ba_1,a_n,\ldots,a_2)\\
=-ba_1&+a_1b
\end{align*}
on which $\tau_0$ again vanishes.

Besides the two cases above there are no other terms of the form $\asm_n(a_n,\ldots,a_1)$ which produce a cyclic path of degree three and where $a_1,\ldots,a_n$ is a cyclic sequence. This proves the lemma.
\end{proof}

\begin{prop}
The functional $\tau$ defined above gives $\mc C(S,\nu)$ the structure of a 3CY category.
\end{prop}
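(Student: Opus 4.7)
The plan is to verify, in three steps, that (a) $\tau_0$ defines a cyclic cocycle, (b) it descends along the base-change adjunction of Subsection~\ref{subsubsec_basechange} to a chain map on $CC_\bullet(\mc C)$, and (c) the induced pairing on $\mathrm{Ext}^\bullet$ is non-degenerate.

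The previous lemma gives $b\tau_0=0$. Furthermore, $\tau_0$ is supported on length-zero Hochschild chains $\bigoplus_X\Hom_{\mc A_0}(X,X)$, while Connes' operator $B$ strictly raises chain length; hence $\tau_0\circ B=0$ tautologically (after extending $\tau_0$ by zero to all of $CC_\bullet$), so $\tau_0$ gives a cyclic chain map $CC_\bullet(\mc A_0)\to\mathbf k[-3]$. The strict functor $F:\mathrm{Tors}(\mc A)\to\mathrm{Tw}(\mc A_0)$ induces a chain map $F_*$ on cyclic complexes; composing with the Morita quasi-isomorphism $CC_\bullet(\mathrm{Tw}(\mc A_0))\simeq CC_\bullet(\mc A_0)$ and the analogous identification $CC_\bullet(\mc C)=CC_\bullet(\mathrm{Tw}_\ZZ(\mathrm{Tors}(\mc A)))\simeq CC_\bullet(\mathrm{Tors}(\mc A))$ (twisted-complex completion and forgetting the $\ZZ/2$-grading being Morita equivalences in characteristic zero), one obtains the sought cocycle $\tau:CC_\bullet(\mc C)\to\mathbf k[-3]$.

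For non-degeneracy of \eqref{cy_pairing}, since arcs in $\XX$ generate $\mc C$ and the Calabi--Yau pairing transports along shifts and exact triangles via the five-lemma, it suffices to treat the case $X,Y\in\XX$. By the preceding lemma, $\mathrm{Ext}^\bullet_{\mc C}(X,Y)\cong\Hom_{\mc A_0}(X,Y)/K(X,Y)$, and the pairing becomes $\langle[a],[b]\rangle=\tau_0(\asm_{2,0}(a,b))$, which is nonzero precisely when the concatenation $ab$ is a full cyclic path $c_{p,X}$. A basis of the quotient is given by boundary paths $X\to Y$ (respectively $Y\to X$) strictly contained in some $c_{p,\cdot}$, together with $1_X$ in degree $0$ and $[c_{p,X}]=[c_{q,X}]$ in degree $3$ when $X=Y$. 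For each $p\in M$ incident to both $X$ and $Y$ and each strict initial sub-path $a:X\to Y$ of $c_{p,X}$, the unique complementary terminal sub-path $b:Y\to X$ satisfies $b\cdot a=c_{p,X}$; this $b$ is simultaneously a strict sub-path of $c_{p,Y}$, the bi-degrees satisfy $|a|+|b|=3$ and $\pi(a)+\pi(b)=1$, and no other basis element pairs non-trivially with $[a]$. Combined with $\langle 1_X,[c_{p,X}]\rangle=\tau_0(c_{p,X})=1$ in the $X=Y$ case, the pairing matrix in this basis is a signed permutation, hence non-degenerate.

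The main obstacle I expect is not conceptual but combinatorial: one must verify that the Morita step in the cyclic (not merely Hochschild) complex genuinely lifts $\tau_0$ to an honest cyclic cocycle on $\mc C$, and that the Koszul signs coming from the $\ZZ\times\ZZ/2$-grading and from the explicit formula for $\asm_{2,0}$ combine so that each diagonal entry of the pairing matrix is truly nonzero. Both involve careful sign tracking but no new ideas beyond what is already in Subsections~\ref{subsec_approx}--\ref{subsec_disk}.
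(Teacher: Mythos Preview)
Your proposal is correct and matches the paper's approach. The paper has already recorded steps (a) and (b) in the paragraphs immediately preceding the proposition (the lemma gives $b\tau_0=0$, and the definition of $\tau$ via $F_*$ and Morita invariance is stated there), so its proof of the proposition reduces to your step (c): each basis element of $\Ext^\bullet_{\mc C}(X,Y)$---a boundary path $a$ strictly inside some $c_{p,\cdot}$, or $1_X$, or $[c_{p,X}]$---has a unique complementary basis element $b$ with $ba=c_{p,X}$, and the pairing matrix is a signed permutation. Your explicit observation that $\tau_0\circ B=0$ because $B$ raises Hochschild length is a point the paper leaves implicit.
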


\begin{proof}
It remains to show that the induced pairing \eqref{cy_pairing} is non-degenerate.
This follows from the explicit description of a basis for $\mathrm{Ext}^\bullet$ in terms of boundary paths.
Since each basis element has a unique complementary basis element with which it pairs non-trivially.
For a boundary path $a$ this is the boundary path $b$ with $ba=c_{p,X}$ for some $p\in M$ of $X\in\XX$, and for $1_X$ this is $c_{p,X}=c_{q,X}$ where $p,q\in M$ are the endpoints of $X$. 
\end{proof}

\section{Stability conditions}
\label{sec_stability}

In this section we prove our first main result, identifying a component of the space of stability conditions $\mathrm{Stab}(\mc C_{g,n})$ with a moduli space of quadratic differentials (see Subsection~\ref{subsec_spaces}).
This relies heavily on the corresponding theorem in~\cite{hkk} for the wrapped Fukaya category. 
What remains to be done is to transfer the stability conditions from the wrapped Fukaya categories to the 3CY categories defined in the previous section.
We show that such a transfer along an adjunction is possible in a somewhat more general (though still quite special) context in Subsection~\ref{subsec_transfer}.
The preliminary Subsection~\ref{subsec_stability} collects some definitions and basic results around stability conditions on triangulated categories.

\subsection{Definition and moduli space}
\label{subsec_stability}

The notion of a stability condition on a triangulated category is due to Bridgeland~\cite{bridgeland07}, inspired by properties of D-branes in string theory and semistable vector bundles on algebraic curves.
The definition we use here is not exactly Bridgeland's, but includes some modifications introduced by Kontsevich--Soibelman~\cite{ks} which have nowadays become fairly standard.

\begin{df}
Let $\mc C$ be an essentially small triangulated category, $\Gamma$ a finitely generated abelian group, and $\mathrm{cl}:K_0(\mc C)\to\Gamma$ an additive map.
A \textbf{stability condition} on $\mc C$ (more precisely the triple $(\mc C,\Gamma,\mathrm{cl})$) is given by 
\begin{enumerate}
\item an additive map $Z:\Gamma\to\CC$, the \textit{central charge}, and
\item for each $\phi\in\RR$ a full additive subcategory $\mc C_\phi\subset\mc C$, the \textit{semistable objects of phase $\phi$}
\end{enumerate}
satisfying the following properties:
\begin{itemize}
\item $\mc C_{\phi+\pi}=\mc C_\phi[1]$,
\item If $\phi_1<\phi_2$ and $E_i\in\mc C_{\phi_i}$ then $\Hom(E_2,E_1)=0$.
\item For each $E\in\mc C$ there is a tower of exact triangles
\[
\begin{tikzcd}[column sep=tiny]
0=E_0 \arrow{rr} & & E_1\arrow{rr}\arrow{dl} & & E_2\arrow{rr}\arrow{dl} & &\cdots \arrow{rr} &  & E_{n-1} \arrow{rr} & & E_n\cong E \arrow{dl} \\
& A_1 \arrow[dashed]{ul} & & A_2 \arrow[dashed]{ul} & & & &  & & A_n \arrow[dashed]{ul}
\end{tikzcd}
\]
with $0\neq A_i\in\mc C_{\phi_i}$, $\phi_1>\phi_2>\ldots>\phi_n$, the \textit{Harder--Narasimhan filtration}.
\item If $E\in\mc C_{\phi}$, $E\neq 0$, then $Z(E):=Z(\mathrm{cl}(E))\in\RR_{>0}e^{i\phi}$.
\item The \textit{support property} holds: There is a norm $\|.\|$ on $\Gamma\otimes_{\ZZ}\RR$ and $C>0$ such that
\[
\|\gamma\|\leq C|Z(\gamma)|
\]
for any $\gamma\in\Gamma$ such that there is a semistable object $E$ with $\gamma=\mathrm{cl}(E)$. 
\end{itemize}
\end{df}

Denote by $\mathrm{Stab}(\mc C)=\mathrm{Stab}(\mc C,\mathrm{cl})$ the set of stability condition for fixed $\mc C$, $\mathrm{cl}:K_0(\mc C)\to \Gamma$. 
Bridgeland defines a natural topology on $\mathrm{Stab}(\mc C)$ such that the map
\[
\mathrm{Stab}(\mc C,\mathrm{cl})\to\Hom(\Gamma,\CC),\qquad \left((\mc C_\phi)_{\phi\in\RR},Z\right)\mapsto Z
\]
becomes a local homeomorphism. 
Thus, $\mathrm{Stab}(\mc C)$ is locally modeled on the complex vector space $\Hom(\Gamma,\CC)$ and inherits the structure of a complex manifold.

\subsection{Transfer along adjunctions}
\label{subsec_transfer}

Stability conditions have notoriously bad functoriality properties:
Given an exact functor $G:\mc C\to \mc D$ between triangulated categories and a stability condition $\sigma$ on $\mc C$ there is in general no natural push-forward stability condition $G_*\sigma$ on $\mc D$, unless further conditions on $G$ and/or $\sigma$ are imposed.
Examples of such conditions are investigated in this subsection, where it turns out --- not only --- that $G_*\sigma$ is defined for any stability condition, but also that $G_*:\mathrm{Stab}(\mc C)\to \mathrm{Stab}(\mc D)$ identifies components of the domain with certain components of the target, see Theorem~\ref{thm_stab_transfer} below.

This subsection assumes somewhat more familiarity with stability conditions on triangulated categories (mainly from Bridgeland's original paper~\cite{bridgeland07}) than the rest of the paper. 

\begin{theorem}
\label{thm_stab_transfer}
Let $\mc C$ and $\mc D$ be triangulated categories, $G:\mc C\to \mc D$ an exact functor with left adjoint $F:\mc D\to \mc C$, and $d\geq 3$ such that 
\begin{equation}\label{tfg_exact_seq}
\mathrm{Cone}(\varepsilon_X:FGX\to X)\cong X[d]
\end{equation}
for $X\in\mc C$, where $\varepsilon_X:FGX\to X$ are the counit morphisms of the adjunction. 
Suppose also that the image of $G$ generates $\mc D$ under direct sums, shifts, and cones.
If $\mc C$ admits a bounded t-structure then $G$ induces an isomorphism $K_0(G):K_0(\mc C)\to K_0(\mc D)$.
Furthermore, if $\mathrm{cl}:K_0(\mc C)\to\Gamma$ is fixed, then there is a map
\begin{equation}\label{induced_map_on_stab}
G_*:\mathrm{Stab}(\mc C,\mathrm{cl})\longrightarrow\mathrm{Stab}\left(\mc D,\mathrm{cl}\circ K_0(G)^{-1}\right)
\end{equation}
biholomorphic onto its image, which is a union of connected components.
It is defined by sending $\sigma=(Z,(\mc C_\phi)_{\phi\in\RR})\in \mathrm{Stab}(\mc C,\mathrm{cl})$ to the stability condition $G_*(\sigma)$ with the same central charge $Z$ and so that the semistable objects of phase $\phi$ are $G(\mc C_\phi)$.
\end{theorem}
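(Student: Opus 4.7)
The plan is to define $G_*\sigma$ by $Z':=Z\circ K_0(G)^{-1}$ and by declaring the semistable subcategory of phase $\phi$ to be the isomorphism closure of $G(\mathcal{C}_\phi)$, then verify in turn (a) that $K_0(G)$ is an isomorphism, (b) that $G_*\sigma$ satisfies the axioms of a stability condition, and (c) that $G_*$ is an injective local biholomorphism whose image is open and closed.

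The common computational input is a $\Hom$-calculation via adjunction. For $X,Y\in\mathcal{C}$, applying $\Hom(-,Y)$ to the defining triangle $FGX\to X\to X[3]$ and using $\Hom_\mathcal{D}(GX,GY)=\Hom_\mathcal{C}(FGX,Y)$ produces the long exact sequence
\[
\Hom(X[3],Y)\to\Hom(X,Y)\to\Hom_\mathcal{D}(GX,GY)\to\Hom(X[2],Y).
\]
Whenever $\Ext^2(X,Y)=\Ext^3(X,Y)=0$ this forces $G$ to be an isomorphism on $\Hom$. Taking $\mathcal{A}_\sigma:=\mathcal{C}_{(0,\pi]}$ to be the heart of the t-structure attached to $\sigma$, the functor $G|_{\mathcal{A}_\sigma}$ is therefore fully faithful, and the same argument with $Y$ replaced by $Y[1]$ yields an isomorphism $\Ext^1_{\mathcal{A}_\sigma}(Z,X)\cong\Hom_\mathcal{D}(GZ,GX[1])$.

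For (a) let $\mathcal{A}'\subset\mathcal{D}$ be the extension closure of $G(\mathcal{A}_\sigma)$. The $\Ext^1$-isomorphism shows that every elementary extension $0\to GX\to W\to GZ\to 0$ inside $\mathcal{A}'$ is the $G$-image of a unique extension in $\mathcal{A}_\sigma$, so by induction on filtration length $G:\mathcal{A}_\sigma\to\mathcal{A}'$ is essentially surjective and hence an equivalence of abelian categories. The vanishing of $\Hom^{<0}_\mathcal{D}(GX,GY)$ for $X,Y\in\mathcal{A}_\sigma$ (again from the same long exact sequence) together with the generation hypothesis --- every $Y\in\mathcal{D}$ is a finite iterated cone of shifts $GX_j[n_j]$, which by the t-structure on $\mathcal{C}$ further decompose into shifts of $\mathcal{A}_\sigma$ --- shows that $\mathcal{A}'$ is the heart of a bounded t-structure on $\mathcal{D}$, whence $K_0(G):K_0(\mathcal{C})=K_0(\mathcal{A}_\sigma)\to K_0(\mathcal{A}')=K_0(\mathcal{D})$ is an isomorphism. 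For (b), the slicing axioms follow from the axioms for $\sigma$ plus the analogous $\Ext^{2,3}$-vanishing between semistable objects of different phases; Harder--Narasimhan filtrations of $Y\in\mathcal{D}$ are built by first decomposing $Y$ via the t-structure into shifts of objects of $\mathcal{A}'$ and then transporting the HN filtrations of the corresponding objects in $\mathcal{A}_\sigma$ through the equivalence, and checking that the resulting phases are strictly decreasing (which is automatic because each piece of the t-structure decomposition contributes phases in a half-open interval of length $\pi$); the support property is inherited since $|Z'(GE)|=|Z(E)|$.

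For (c), the commutative square relating $G_*$ to the central-charge maps via the linear isomorphism $K_0(G)$, combined with Bridgeland's local homeomorphism theorem on both sides, shows that $G_*$ is a local biholomorphism. Injectivity follows because $G_*\sigma_1=G_*\sigma_2$ forces a common heart $\mathcal{A}'$ in $\mathcal{D}$, and the equivalences $G:\mathcal{A}_\sigma^{(i)}\xrightarrow{\sim}\mathcal{A}'$ then reconstruct identical hearts in $\mathcal{C}$. Openness of the image is immediate; for closedness one uses the tautological inequality $d_\mathcal{C}(\sigma,\tau)\leq d_\mathcal{D}(G_*\sigma,G_*\tau)$ (the defining supremum on the right ranges over a larger class of test objects, and $G$ preserves HN factors and their masses) to lift any Cauchy sequence $(G_*\sigma_n)$ to a Cauchy sequence in $\mathrm{Stab}(\mathcal{C})$, which converges by Woolf's completeness theorem to some $\sigma_\infty$ with $G_*\sigma_\infty=\lim G_*\sigma_n$ by continuity. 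The most delicate step is the heart-equivalence argument in (a): reconstructing $\mathcal{A}_\sigma$ from $\mathcal{A}'$ via the $\Ext^1$-isomorphism is what underpins both the $K_0$-identification and the injectivity of $G_*$.
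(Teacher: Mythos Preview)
Your proposal is correct in outline and follows the same strategy as the paper: derive the key $\Ext$-isomorphisms from the long exact sequence attached to the cone triangle, transfer the heart along $G$ to obtain the $K_0$-isomorphism, and use the metric inequality $d_{\mc C}(\sigma,\tau)\le d_{\mc D}(G_*\sigma,G_*\tau)$ together with Woolf's completeness theorem to prove closedness of the image. (The paper also derives injectivity directly from this inequality, which is shorter than your heart-reconstruction argument, but yours works too.)

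The one place where your argument is genuinely incomplete is the assertion in (a) that $\mc A'$ is the heart of a bounded t-structure. Knowing that $\mc D$ is generated under shifts and cones by $\mc A'$ and that $\Hom^{<0}$ vanishes within $\mc A'$ is not sufficient: an arbitrary iterated cone of objects $A_k[m_k]$ with $A_k\in\mc A'$ need not be rearrangeable into one with the $m_k$ strictly decreasing. The paper isolates this as a separate lemma, whose hypotheses require in addition that $\mc A'$ be extension-closed and that every morphism in $\mc A'$ be \emph{admissible} (i.e.\ $\mathrm{Cone}(f)$ sits in a triangle $K[1]\to\mathrm{Cone}(f)\to C$ with $K,C\in\mc A'$); the rearrangement is then carried out by repeated octahedral-axiom swaps, treating separately the cases $m_i=m_{i+1}$, $m_i=m_{i+1}-1$, and $m_i<m_{i+1}-1$. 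In your setup both extra hypotheses follow immediately from the equivalence $G:\mc A_\sigma\xrightarrow{\sim}\mc A'$ you have already established, so the gap is easily closed once identified --- but it is the technical heart of the argument and should not be elided.
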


The proof of this theorem will take up the rest of this subsection.
Here is a sketch of the main steps:
\begin{itemize}
\item
By a result of Bridgeland, stability conditions are equivalently given by the heart of a bounded t-structure and a central charge satisfying certain properties. 
\item
Show that bounded t-structures can be pushed forward along $G$ (Lemma~\ref{lem_tstr_transfer}).
This uses an alternative characterization of hearts of bounded t-structures (Lemma~\ref{lem_tstr_char}), which might be of independent interest.
\item 
The previous step allows us to construct a map $G_*:\mathrm{Stab}(\mc C)\to \mathrm{Stab}(\mc D)$.
We use the metric on the space of stability conditions to show continuity and injectivity of this map.
\item
To show that the image of $G_*$ is closed, we use completeness of the space of stability conditions, a result of J.~Woolf.
\end{itemize}

We recall the definition of a heart of a bounded t-structure on a triangulated category~\cite{bridgeland07}.
The point is that while any t-structure $(\mc C_{\geq 0},\mc C_{\leq 0})$ in the sense of~\cite{bbd} has a heart $\mc A:=\mc C_{\geq 0}\cap \mc C_{\leq 0}[-1]$, the t-structure is in general not determined by $\mc A$, though it is in the bounded case, leading to the following equivalent definition.

\begin{df}
A full subcategory, $\mc A$, of a triangulated category $\mc C$ is the \textbf{heart of a bounded t-structure} if $\mathrm{Ext}^{<0}(A,B)=0$ for $A,B\in\mc A$, and for any $E\in\mc C$ there is a tower of exact triangles
\[
\begin{tikzcd}[column sep=tiny]
0=E_0 \arrow{rr} & & E_1\arrow{rr}\arrow{dl} & & E_2\arrow{rr}\arrow{dl} & &\cdots \arrow{rr} &  & E_{n-1} \arrow{rr} & & E_n\cong E \arrow{dl} \\
& A_1[k_1] \arrow[dashed]{ul} & & A_2[k_2] \arrow[dashed]{ul} & & & &  & & A_n[k_n] \arrow[dashed]{ul}
\end{tikzcd}
\]
where $k_1>k_2>\ldots>k_n$ are integers and $A_1,\ldots,A_n\in\mc A$.
\end{df}

The following characterization of bounded t-structures will be useful.

\begin{lemma}
\label{lem_tstr_char}
Let $\mc C$ be a triangulated category and $\mc A\subset\mc C$ a full additive subcategory. 
Then $\mc A$ is the heart of a bounded t-structure on $\mc C$ if and only if
\begin{enumerate}
\item
$\mathrm{Ext}^{<0}(A,B)=0$ for $A,B\in\mc A$,
\item
$\mc A$ is closed under extensions,
\item
every morphism, $f$, in $\mc A$ is \textit{admissible}, i.e. there is a triangle 
\[
K[1]\longrightarrow\mathrm{Cone}(f)\longrightarrow C\xrightarrow{+1}
\]
with $K,C\in\mc A$,
\item
the closure of $\mc A$ under shifts and cones is all of $\mc C$.
\end{enumerate}
\end{lemma}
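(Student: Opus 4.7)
The plan is to verify directly the filtration axiom of a bounded t-structure: for every $E\in\mc C$ I will build a tower of triangles with graded pieces $A_i[k_i]$, $A_i\in\mc A$, and strictly decreasing shifts $k_1>k_2>\ldots>k_n$. Combined with condition (1), this yields exactly the characterization of a bounded t-structure with heart $\mc A$ used in Subsection~\ref{subsec_stability}.

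By condition (4), every $E\in\mc C$ admits \emph{some} filtration whose graded pieces are shifts of objects of $\mc A$, built up by induction on the number of cones needed to present $E$ from $\mc A$ under shifts and cones. The shifts in this initial filtration are in arbitrary order, so the main work is to reorder and compress them.

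The workhorse is Lemma~\ref{lem_filt_switch}. To swap two consecutive pieces $A_i=A[k]$, $A_{i+1}=A'[k']$ appearing in the wrong order (that is, with $k<k'$), I compute $C=\mathrm{Cone}(A'[k'-1]\to A[k])$ and decompose it as $B_1\to C\to B_2\xrightarrow{+1}$. If $k'\geq k+2$, the connecting morphism lies in $\mathrm{Ext}^{k-k'+1}(A',A)$, which vanishes by condition (1); hence $C\cong A[k]\oplus A'[k']$, and I take $B_1=A'[k']$, $B_2=A[k]$. If $k'=k+1$, the connecting morphism is a map $f\in\Hom_{\mc A}(A',A)$, and condition (3) provides a triangle $K[1]\to\mathrm{Cone}(f)\to Q\xrightarrow{+1}$ with $K,Q\in\mc A$, giving $B_1=K[k+1]$, $B_2=Q[k]$. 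In both cases the shifts of $B_1,B_2$ are in decreasing order, so iterating this ``bubble-sort'' produces a filtration with weakly decreasing shifts. Consecutive pieces $A_i[k], A_{i+1}[k]$ with equal shift are then merged using condition (2): they form an extension of $A_{i+1}$ by $A_i$, and closure of $\mc A$ under extensions provides a single graded piece $A[k]$ with $A\in\mc A$; this merge preserves monotonicity, and after finitely many merges the sequence of shifts is strictly decreasing.

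Finally, I define $\mc C^{\leq 0}$ (resp.\ $\mc C^{\geq 0}$) as the full subcategory of objects admitting such a strictly decreasing filtration with all $k_i\geq 0$ (resp.\ $k_i\leq 0$). Condition (1) forces $\Hom(\mc C^{\leq -1},\mc C^{\geq 0})=0$, and the existence of the filtration just constructed together with the obvious shift-closure properties shows that $(\mc C^{\leq 0},\mc C^{\geq 0})$ is a bounded t-structure whose heart is $\mc A$. Boundedness is immediate from the finite length of the filtration. The main obstacle is the reordering step: pure $\mathrm{Ext}^{<0}$-vanishing handles only the case $k'\geq k+2$, and condition (3) is precisely the input needed to resolve the borderline case $k'=k+1$, where the obstruction to a direct swap is a genuine morphism in $\mc A$ rather than a cohomology class in negative degree.
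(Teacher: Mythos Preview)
Your proof is correct and follows essentially the same approach as the paper: start from an arbitrary filtration with shifted pieces in $\mc A$ (condition (4)), then use Lemma~\ref{lem_filt_switch} in a bubble-sort procedure with the same three-case analysis (merge equal shifts via (2), swap adjacent shifts via (3), split and swap distant shifts via (1)) to obtain a strictly decreasing sequence of shifts. The paper tracks termination by counting the pairs $(i,j)$ with $i<j$ and $k_i\leq k_j$, which makes your bubble-sort intuition precise.
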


For the proof of Lemma~\ref{lem_tstr_char} we will need the following basic lemma to modify filtrations in triangulated categories.
\begin{lemma}
\label{lem_filt_switch}
Let
\[
\begin{tikzcd}[column sep=tiny]
E_0 \arrow{rr} & & E_1\arrow{rr}\arrow{dl} & & E_2\arrow{dl} \\
& A_1 \arrow[dashed]{ul} & & A_2 \arrow[dashed]{ul} 
\end{tikzcd}
\]
be a pair of triangles in a triangulated category $\mc C$, $C\cong \mathrm{Cone}(A_2[-1]\to A_1)$, and $B_1\to C\to B_2\xrightarrow{+1}$ a triangle.
Then there is an $F_1\in \mc C$ and triangles
\[
\begin{tikzcd}[column sep=tiny]
E_0 \arrow{rr} & & F_1\arrow{rr}\arrow{dl} & & E_2\arrow{dl} \\
& B_1 \arrow[dashed]{ul} & & B_2 \arrow[dashed]{ul}.
\end{tikzcd}
\]
\end{lemma}

\begin{proof}
The octahedral axiom gives a triangle $E_2\to C\to E_0[1]\xrightarrow{+1}$.
Define $F_1$ as the cone of the composition $B_1[-1]\to C[-1]\to E_0$.
A second application of the octahedral axiom gives the triangle $B_2[-1]\to F_1\to E_2\xrightarrow{+1}$.
\end{proof}

\begin{proof}[Proof of Lemma~\ref{lem_tstr_char}]
According to \cite[Proposition 1.2.4]{bbd}, the first and third condition above imply that $\mc A$ is an abelian category whose short exact sequences are in one-to-one correspondence  with exact triangles in $\mc C$ with all vertices in $\mc A$.
Let $E\in\mc C$, then by the fourth assumption there is a tower of triangles
\[
\begin{tikzcd}[column sep=tiny]
0=E_0 \arrow{rr} & & E_1\arrow{rr}\arrow{dl} & & E_2\arrow{rr}\arrow{dl} & &\cdots \arrow{rr} &  & E_{n-1} \arrow{rr} & & E_n\cong E \arrow{dl} \\
& A_1[k_1] \arrow[dashed]{ul} & & A_2[k_2] \arrow[dashed]{ul} & & & &  & & A_n[k_n] \arrow[dashed]{ul}
\end{tikzcd}
\]
with $A_i\in\mc A$, $k_i\in\ZZ$.
If the sequence of $k_i$ is not strictly decreasing, then there is an $i$ with $1\leq i<n$ such that $k_i\leq k_{i+1}$.
We claim that there is a modified tower such that the number of pairs $(i,j)$ with $1\leq i<j\leq n$ and $k_i\leq k_j$ is strictly smaller.
An inductive argument then implies that there is a tower of extensions as above such that $k_i$ are strictly decreasing, showing that $\mc A$ is the heart of a bounded t-structure.

\underline{Case $k_i=k_{i+1}=:k$}.
The composite morphism $A_{i+1}[k]\to E_i\to A_i[k]$ of degree one determines an extension $A_i\to B\to A_{i+1}\xrightarrow{+1}$, and $B\in\mc A$ by the second assumption of the lemma. 
Applying the octahedral axiom one sees that the tower of extensions can be replaced by a shorter one with $E_i$ removed and new triangle $E_{i+1}\to B[k]\to E_{i-1}\xrightarrow{+1}$.

\underline{Case $k_i=k_{i+1}-1=:k$}.
The composite morphism $A_{i+1}[k+1]\to E_i\to A_i[k]$ has degree zero as a morphism $f:A_{i+1}\to A_i$, i.e. is a morphism in $\mc A$.
By the third assumption of the lemma that is a triangle 
\[
K[1]\longrightarrow\mathrm{Cone}(f)\longrightarrow C\xrightarrow{+1}
\]
with $K,C\in\mc A$, thus by Lemma~\ref{lem_filt_switch} there is a modified tower with $A_i[k]$, $A_{i+1}[k+1]$, $E_i$, replaced by $K[k+1]$, $C[k]$, and some $E_i'$, respectively, i.e. $k_i$ and $k_{i+1}$ are switched.

\underline{Case $k_i<k_{i+1}-1$}.
The composite morphism $A_{i+1}[k]\to E_i\to A_i[k]$ has degree $k_i-k_{i+1}+1<0$ as a morphism from $A_{i+1}$ to $A_i$, so must vanish by the first assumption of the lemma. 
By Lemma~\ref{lem_filt_switch} there is a modified tower with $A_i[k_i]$ and $A_{i+1}[k_{i+1}]$ exchanging places.
\end{proof}

The next lemma shows that bounded t-structures can be pushed forward along a certain class of functors.

\begin{lemma}
\label{lem_tstr_transfer}
Let $G:\mc C\to\mc D$ be an exact functor of triangulated categories with the property that if $X,Y\in\mc C$ with $\mathrm{Ext}^{<0}(X,Y)=0$, then $G$ gives an isomorphism $\mathrm{Ext}^i(X,Y)\to\mathrm{Ext}^i(GX,GY)$ for any $i\leq 1$, and that the image of $G$ generates $\mc D$ under shifts and cones.
Then if $\mc A\subset \mc C$ is the heart of a bounded t-structure, so is $G(\mc A)\subset \mc D$, and $\mc A\cong G(\mc A)$ as abelian categories.
\end{lemma}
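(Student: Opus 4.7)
The plan is to apply Lemma~\ref{lem_tstr_char} with $\mc A\subset\mc C$ replaced by $G(\mc A)\subset\mc D$, verifying its four conditions in turn; three will be essentially formal, while one will be the substantive step. For condition (1) (vanishing of negative Ext), I would apply the hypothesis at $k=0$ to a pair $A,B\in\mc A$: since $\mathrm{Ext}^{<0}(A,B)=0$ in $\mc A$, the hypothesis supplies isomorphisms $\mathrm{Ext}^i(A,B)\cong\mathrm{Ext}^i(GA,GB)$ for $i\le 0$, in particular the required vanishing. For condition (4) (generation), $\mc A$ generates $\mc C$ under shifts and cones as the heart of a bounded t-structure; exactness of $G$ propagates this to $G(\mc A)$ generating $G(\mc C)$; and the image of $G$ generates $\mc D$ by hypothesis. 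For condition (3) (admissibility), I would lift a morphism $f:GA\to GB$ through the degree-$0$ isomorphism to some $g:A\to B$, invoke admissibility of $g$ in $\mc A$ to obtain a triangle $K[1]\to\mathrm{Cone}(g)\to C$ with $K,C\in\mc A$, and transport it under the exact functor $G$ to the desired admissibility triangle for $f$.

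The substantive step is axiom (2), closure under extensions. Given a triangle $GA\to E\to GB\xrightarrow{\alpha}GA[1]$ in $\mc D$ with $A,B\in\mc A$, I must produce $C\in\mc A$ with $GC\cong E$. My strategy is to establish that $G$ induces an isomorphism $\mathrm{Ext}^1(B,A)\xrightarrow{\sim}\mathrm{Ext}^1(GB,GA)$; granting this, $\alpha=G(\beta)$ for a unique $\beta\in\mathrm{Ext}^1(B,A)$, the extension triangle $A\to C\to B$ in $\mc C$ corresponding to $\beta$ has $C\in\mc A$ by extension-closure of $\mc A$, and the resulting triangle $GA\to GC\to GB$ with connecting class $\alpha$ then forces $GC\cong E$ by the triangulated five-lemma.

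The main obstacle is precisely this $\mathrm{Ext}^1$-isomorphism: the hypothesis as literally stated supplies isomorphisms on $\mathrm{Ext}^i$ only up through the first possibly non-vanishing degree of $\mathrm{Ext}^\bullet(B,A)$, which for a pair in $\mc A$ is degree $0$ and not degree $1$. In the intended application (Theorem~\ref{thm_stab_transfer}) the needed strengthening will come from the spherical-type adjunction $F\dashv G$ with $\mathrm{Cone}(FGX\to X)\cong X[3]$: applying $\mathrm{Hom}_{\mc C}(-,A)$ to this cone triangle and invoking the adjunction isomorphism $\mathrm{Hom}(FGB,A)\cong\mathrm{Hom}(GB,GA)$ yields a long exact sequence of the form $\mathrm{Ext}^{i-2}(B,A)\to\mathrm{Ext}^i(B,A)\to\mathrm{Ext}^i(GB,GA)\to\mathrm{Ext}^{i-1}(B,A)$ which, combined with the vanishing of $\mathrm{Ext}^{-2}(B,A)$ and $\mathrm{Ext}^{-1}(B,A)$, promotes the $G$-induced comparison isomorphism one degree higher and so covers $i=1$. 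This adjunction-level input is where the essential technical work sits.
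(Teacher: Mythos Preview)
Your approach is identical to the paper's: verify the four conditions of Lemma~\ref{lem_tstr_char} for $G(\mc A)\subset\mc D$, handling (1), (3), (4) exactly as you describe, and for (2) lift the extension class through an isomorphism $\mathrm{Ext}^1(B,A)\cong\mathrm{Ext}^1(GB,GA)$.

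You have also correctly spotted a slip in the lemma's statement. The hypothesis ``$i<k+1$'' (i.e.\ $i\le k$) literally gives only the isomorphism in the first possibly non-vanishing degree, not one degree higher; yet the paper's own proof of (2) simply asserts the $\mathrm{Ext}^1$-isomorphism ``by assumption'' when $k=0$. The intended hypothesis is evidently $i\le k+1$ (equivalently $i<k+2$). With that correction the lemma's proof is self-contained and no appeal to the adjunction is needed inside the lemma. Your long-exact-sequence argument from the triangle $FGX\to X\to X[3]$ is precisely how the paper verifies this corrected hypothesis in the proof of Theorem~\ref{thm_stab_transfer}: since $\mathrm{Ext}^{<k}(X,Y)=0$ kills both $\mathrm{Ext}^{i-2}(X,Y)$ and $\mathrm{Ext}^{i-3}(X,Y)$ for $i\le k+1$, the sequence yields isomorphisms up through degree $k+1$, one more than the lemma as written demands. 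So the ``technical work'' you locate in the adjunction belongs to the verification of the (corrected) hypothesis in the application, not to the proof of the lemma itself.
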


\begin{proof}
We will check that the full subcategory $G(\mc A)\subset \mc D$ satisfies the conditions of Lemma~\ref{lem_tstr_char}.
For (1) note that $X,Y\in\mc A$ implies $\mathrm{Ext}^{<0}(X,Y)=0$ so also $\mathrm{Ext}^{<0}(GX,GY)=0$ by assumption.
For (2) suppose that $X,Y\in\mc A$ and $GX\to E\to GY\xrightarrow{+1}$ is a triangle in $\mc D$.
By assumption, $G$ induces an isomorphism $\mathrm{Ext}^1(Y,X)\cong \mathrm{Ext}^1(GY,GX)$, so there is a lift of the degree one morphism $GY\to GX$ which can be completed to a triangle whose image in $\mc D$ is isomorphic to the original triangle.
Hence, $G(\mc A)$ is closed under extensions.
For (3) suppose that $X,Y\in\mc A$ and $f\in\Hom(GX,GY)$. 
By assumption, $f$ lifts to an morphism $\tilde{f}:X\to Y$, and since morphisms in $\mc A$ are admissible there is an exact triangle
\[
K[1]\longrightarrow\mathrm{Cone}(\tilde{f})\longrightarrow C\xrightarrow{+1}
\]
whose image under $G$ shows that $f$ is admissible, since $G(\mathrm{Cone}(\tilde f))\cong \mathrm{Cone}(f)$ by exactness of $G$.
For (4) note that the triangulated closure of $\mc A$ is all of $\mc C$, since it is the heart of a bounded t-structure.
Hence the closure of $F(\mc A)$ in $\mc D$ contains $F(\mc C)$, whose closure is by assumption $\mc D$.
\end{proof}

The following result, essentially due to Bridgeland~\cite{bridgeland07}, gives an equivalent definition of stability conditions in terms of bounded t-structures.

\begin{prop}
\label{prop_stab_tstr}
Let $\left (Z,(\mc C_\phi)_\phi\right)$ be a stability condition on a triangulated category $\mc C$ with homomorphism $\mathrm{cl}:K_0(\mc C)\to\Gamma$, then the full subcategory $\mc C_{(0,\pi]}\subset \mc C$ of objects with semistable factors of phase contained in the interval $(0,\pi]$ is abelian and the heart of a bounded t-structure.

Conversely, suppose we have an additive map $Z:\Gamma\to\CC$ and the heart of a bounded t-structure $\mc A\subset \mc C$ such that:
\begin{itemize}
\item $Z(E)\in\RR_{>0}e^{i(0,\pi]}$ for $0\neq E\in\mc A$. Thus the phase $\phi(E):=\mathrm{Arg}(Z(E))\in (0,\pi]$ is well-defined for $0\neq E\in\mc A$ and $E$ is by definition semistable if 
\[
0\neq A\subset E\implies \phi(A)\leq\phi(E)
\]
where $A\in\mc A$ is any non-zero subobject of $E$.
\item If $0\neq E\in\mc A$ then there is a filtration
\[
0=E_0\subset E_1\subset \ldots\subset E_{n-1}\subset E_n\cong E
\]
such that $A_i:=E_i/E_{i-1}$ is semistable for $1\leq i\leq n$ with $\phi(A_1)>\phi(A_2)>\ldots>\phi(A_n)$.
\item $Z$ satisfies the support property.
\end{itemize}
Then defining $\mc C_\phi$ for $\phi\in (0,\pi]$ to be semistable objects with $\phi(E)=\phi$ and the zero object, which fixes $\mc C_\phi$ for all $\phi\in\RR$ by $\mc C_\phi[k]=\mc C_{\phi+k\pi}$, gives a stability condition on $\mc C$.
\end{prop}

\begin{proof}[Proof of Theorem~\ref{thm_stab_transfer}]
The assumptions of Lemma~\ref{lem_tstr_transfer} are satisfied since~\eqref{tfg_exact_seq} gives a long exact sequence
\[
\ldots\to\mathrm{Ext}^i(X,Y)\longrightarrow \underbrace{\mathrm{Ext}^i(FGX,Y)}_{\mathrm{Ext}^i(GX,GY)}\longrightarrow\underbrace{\mathrm{Ext}^i(X[d-1],Y)}_{\mathrm{Ext}^{i-d+1}(X,Y)}\to\ldots.
\] 
Furthermore, if $\mc A\subset \mc C$ is the heart of a bounded t-structure (which exists by assumption), then there is a commutative square of maps between Grothendieck groups
\[
\begin{tikzcd}
K_0(\mc A) \arrow{r}\arrow{d} & K_0(G(\mc A)) \arrow{d} \\
K_0(\mc C) \arrow{r} & K_0(\mc D) 
\end{tikzcd}
\]
where the vertical maps are induced by inclusion functors and are isomorphisms (this is a general property of bounded t-structures) and the top horizontal arrow is an isomorphism since $\mc A\cong G(\mc A)$ as abelian categories, thus the bottom horizontal arrow is also an isomorphism.
Thus, using the definition of stability conditions in terms of t-structures (Proposition~\ref{prop_stab_tstr}) we get an induced map $G_*$ on sets of stability conditions as in \eqref{induced_map_on_stab}.

Next we want to check continuity of $G_*$.
The topology on $\mathrm{Stab}$ is the subspace topology of the product topology on the product of the set of slicings and $\Hom(\Gamma,\CC)$. 
The topology on the latter is the standard Euclidean topology, while the topology on the former comes from a metric on the space of slicings. 
This metric can be written as 
\[
d(\mc P,\mc Q)=\inf\left\{\varepsilon\in [0,\infty)\mid \mc Q_\phi\subset \mc P_{[\phi-\varepsilon,\phi+\varepsilon]}\forall \phi\in\RR \right\}
\]
according to~\cite[Lemma 6.1]{bridgeland07}.
If we start with stability conditions $\sigma$ on $\mc C$ with slicing $\mc P$, then $G_*(\sigma)$ has  a slicing $G_*\mc P$ in $\mc D$ satisfying 
\[
\left(G_*\mc P\right)_{[\phi-\varepsilon,\phi+\varepsilon]}= G_*\left(\mc P_{[\phi-\varepsilon,\phi+\varepsilon]}\right)
\]
whenever $\varepsilon<\frac{\pi}{2}$. 
This follows, since everything is contained in the heart of a bounded t-structure of the form $\mc D_{(\phi,\phi+\pi]}$.
In particular if $\sigma_1,\sigma_2\in\mathrm{Stab}(\mc C)$ have slicing $\mc P,\mc Q$, respectively, with $d(\mc P,\mc Q)<\varepsilon< \frac{\pi}{2}$, then 
\[
\left(G_*\mc Q\right)_\phi=G_*(\mc Q_\phi)\subset G_*\left(\mc P_{[\phi-\varepsilon,\phi+\varepsilon]}\right)=\left(G_*\mc P\right)_{[\phi-\varepsilon,\phi+\varepsilon]}
\]
thus $d(G_*\mc P,G_*\mc Q)\leq d(\mc P,\mc Q)$, which shows continuity of $G_*$, since the map on $\Hom(\Gamma,\CC)$ is just the identity.

Since the complex structure on $\mathrm{Stab}$ is lifted from the one on $\Hom(\Gamma,\CC)$, and $G_*$ is the identity on the latter, $G_*$ is a holomorphic local homeomorphism.

It remains to show that $G_*$ is injective and that the image of $G_*$ is closed.
For both we use the metric, defined by Bridgeland,
\[
d(\sigma_1,\sigma_2):=\sup_{0\neq E\in\mc C}\max\left\{|\phi^-_{\sigma_1}(E)-\phi^-_{\sigma_2}(E)|,|\phi^+_{\sigma_1}(E)-\phi^+_{\sigma_2}(E)|,\left|\log\frac{m_{\sigma_1}(E)}{m_{\sigma_2}(E)}\right|\right\}
\]
on $\mathrm{Stab}$.
Here, $m(E):=|Z(A_1)|+\ldots+|Z(A_n)|$, $\phi^+(E):=\phi_1$, and $\phi^-(E):=\phi_n$, where $A_1,\ldots,A_n$ are the semistable components of $E$ with phases $\phi_1>\ldots >\phi_n$.
(Strictly speaking this is a \textit{pseudometric}, i.e. distances can be $+\infty$.)
Since $G_*$ preserves Harder--Narasimhan filtrations we find that
\begin{equation}
\label{Gstar_expanding}
d(G_*\sigma_1,G_*\sigma_2)\geq d(\sigma_1,\sigma_2),\qquad \sigma_1,\sigma_2\in\mathrm{Stab}(\mc C).
\end{equation}
This gives injectivity.

To show that the image of $G_*$ is closed, we use the fact, due to Woolf~\cite{woolf12}, that the above metric is complete. 
(A priori, the finiteness condition in \cite{woolf12} is different from the support property, however the discussion in \cite[Appendix B]{bayer_macri} shows that the two coincide in our case.)
Now, if $G_*(\sigma_i)\to \sigma$ in $\mathrm{Stab}(\mc D)$ then, since $G_*(\sigma_i)$ form a Cauchy sequence, the $\sigma_i$ themselves form a Cauchy sequence in $\mathrm{Stab}(\mc C)$ by~\eqref{Gstar_expanding}, which converges by completeness.
By continuity, this implies that the image of $G_*$ is closed.

Combining all of the above, $G_*$ must be biholomorphic onto a union of connected components.
\end{proof}

\subsection{Proof of the first main theorem}
\label{subsec_spaces}

In this subsection we apply Theorem~\ref{thm_stab_transfer} from the previous subsection to establish the first main result of this paper, identifying a component of $\mathrm{Stab}(\mc C_{g,n})$ with a space of quadratic differentials with $n$ simple poles on genus $g$ surfaces.

Fix $g\geq 0$ and $n\geq 0$ with $n\geq 4$ if $g=0$ and $n\geq 2$ if $g=1$.
Choose a compact Riemann surface $S$ of genus $g$ and a quadratic differential $\varphi$ with simple zeros and exactly $n$ poles, all of which are simple.
Let $M\subset S$ be the set of zeros and poles of $\varphi$ and $\nu=\mathrm{hor}(\varphi)$ be the horizontal foliation of $\varphi$ on $S\setminus M$.
For any other choice $(S',\varphi')$ there is a diffeomorphism of punctured surfaces $f:S\to S'$ which sends $\nu$ to a foliation homotopic to $\mathrm{hor}(\varphi')$, so $(S,M,\nu)$ is, up to equivalence, uniquely determined by $g$ and $n$ (Proposition~\ref{prop_geomgrading}).

Consider the finitely generated abelian group
\[
\Gamma_{S,\nu}:=H_1(S,M;\ZZ\otimes_{\ZZ/2}\Sigma_\nu)
\]
where $\Sigma_\nu\to S\setminus M$ is the double cover given by the two orientations of $\nu(p)\subset T_pS$, as in Subsection~\ref{subsec_Z2}, and $\ZZ\otimes_{\ZZ/2}\Sigma_\nu$ is the local system of abelian groups locally isomorphic to $\ZZ$ but globally twisted by $\Sigma_\nu$.
Since $\nu=\mathrm{hor}(\varphi)$ we can equivalently define $\Sigma_\nu$ to be the double cover where the abelian differential $\sqrt{\varphi}$ is well-defined.

The moduli space $\mc Q_{S,\nu}$ was defined in~\cite{hkk} and in the introduction.
(In the introduction the notation $\mc Q_{g,n}$ was used. As discussed above, $(S,\nu)$ is determined up to isomorphism by $(g,n)$, so the two may be used interchangeably to some extent. However, $\mc Q_{g,n}$ is only defined up to non-unique isomorphism.)
If $(J,\varphi,h)$ represents a point in $\mc Q_{S,\nu}$ then there is a canonical isomorphism
\[
\Gamma_{S,\nu}=H_1(S,M;\ZZ\otimes_{\ZZ/2}\Sigma_{\mathrm{hor}(\varphi)})
\]
since we can use $h$ to identify the two double covers.
Integration along paths gives an additive map
\[
Z:\Gamma_{S,\nu}\to\CC,\qquad \gamma\mapsto\int_\gamma\sqrt{\varphi}
\]
and letting the point in $\mc Q_{S,\nu}$ vary, we get a \textit{period map}
\[
\mc Q_{S,\nu}\to \mathrm{Hom}\left(\Gamma_{S,\nu},\CC\right)
\]
which can be shown to be a local homeomorphism, see~\cite{hkk}, thus providing $\mc Q_{S,\nu}$ with the structure of a complex manifold.

Up to isomorphism, $\mc Q_{S,\nu}$ depends only on $g$ and $n$.
We note that while certain moduli spaces of quadratic differentials/half-translation surfaces have several connected components~\cite{lanneau}, all $\mc Q_{S,\nu}$ are connected, essentially since this is the generic stratum of $T^*\mc M_{g,n}$, c.f.~Proposition~\ref{prop_geomgrading}.

A special case of the main result of~\cite{hkk} is the following theorem.

\begin{theorem}[\cite{hkk}]
For $g,n\geq 0$ choose  $(S,M,\nu)$ as above and let $\mc F(S\setminus M,\nu)$ be the wrapped Fukaya category (over any field). 
Then there is a canonical isomorphism of abelian groups $K_0(\mc F(S\setminus M,\nu))=\Gamma_{S,\nu}$ and a continuous map 
\[
\mc Q_{S,\nu}\to \mathrm{Stab}(\mc F(S\setminus M,\nu))
\]
which is a homeomorphism onto a connected component of its target and makes the diagram
\[
\begin{tikzcd}
\mc Q_{S,\nu}\arrow{d}\arrow{r} & \mathrm{Stab}(\mc F(S\setminus M,\nu)) \arrow{d} \\
\mathrm{Hom}(\Gamma_{S,\nu},\CC) \arrow{r} & \mathrm{Hom}(K_0(\mc F(S\setminus M,\nu)),\CC)  
\end{tikzcd}
\]
commute.
Moreover, given $(J,\varphi,h)\in \mc Q_{S,\nu}$, the semistable objects of phase $\phi\in\RR$ of the corresponding stability conditions are obtained as extensions of those objects which are represented by saddle connections and closed geodesics of phase $\phi$.
\end{theorem}

Combining the above theorem with Theorem~\ref{thm_stab_transfer} applied to the functor $G:\mc F(S\setminus M,\nu)\to\mc C(S,\nu)$ gives our first main result.
This functor satisfies the conditions of Theorem~\ref{thm_stab_transfer} by Proposition~\ref{prop_adjunction} and Lemma~\ref{lem_inv_trivial}.

\begin{theorem}
\label{thm_stab_C}
For $g,n\geq 0$ choose the punctured surface with grading $(S,M,\nu)$ as above.
Then there is a canonical isomorphism of abelian groups $K_0(\mc C(S,\nu))=\Gamma_{S,\nu}$ and a continuous map 
\[
\mc Q_{S,\nu}\to \mathrm{Stab}(\mc C(S,\nu))
\]
which is a homeomorphism onto a connected component of its target and makes the diagram
\[
\begin{tikzcd}
\mc Q_{S,\nu}\arrow{d}\arrow{r} & \mathrm{Stab}(\mc C(S,\nu)) \arrow{d} \\
\mathrm{Hom}(\Gamma_{S,\nu},\CC) \arrow{r} & \mathrm{Hom}(K_0(\mc C(S,\nu)),\CC)  
\end{tikzcd}
\]
commute.
Moreover, given $(J,\varphi,h)\in \mc Q_{S,\nu}$, the semistable objects of phase $\phi\in\RR$ of the corresponding stability conditions are obtained as extensions of those objects which are represented by saddle connections and closed geodesics of phase $\phi$.
\end{theorem}

\section{DT invariants}
\label{sec_dt}

The goal of this section is to prove our second main result, Theorem~\ref{thm2_intro}, which expresses the DT invariants of the 3CY categories with stability conditions constructed from quadratic differentials in the previous sections, in terms of counts of saddle connections and flat cylinders.
On the one hand, the computations themselves are not too difficult thanks to the fact that at most one-parameter families of stable objects appear. 
On the other hand, the general machinery of DT invariants is fairly involved, and thus this section is, unfortunately, significantly less self-contained than previous ones.
In particular, while we briefly review some of the parts of Kontsevich--Soibelman's seminal work~\cite{ks} in Subsections~\ref{subsec_dt_intro} and~\ref{subsec_orientation}, some familiarity with its content is essential in order to follow all the proofs.

Subsections~\ref{subsec_pairing} and~\ref{subsec_generic} are more elementary. 
The first describes the skew-symmetric Euler pairing on $K_0(\mc C(S,\nu))$ in geometric terms, while the second investigates the behavior of saddle connections and closed geodesics under the genericity condition on $\varphi$.
The goal of Section~\ref{subsec_orientation} is to construct orientation data on $\mc C(S,\nu)$, which is necessary for the definition of the motivic DT invariants.
Actually, we only construct \textit{partial} orientation data, but this turns out to be good enough for our purposes.
Finally, Section~\ref{subsec_dtcomp} completes the proof of Theorem~\ref{thm2_intro}.
The computation is based on the knowledge of the DT invariants for a certain small list of quivers with potential. 
Some parts of the calculation are relegated to Appendix~\ref{sec_qdilog}.

\subsection{Overview of motivic Donaldson--Thomas invariants}
\label{subsec_dt_intro}

In their landmark paper~\cite{ks}, Kontsevich--Soibelman proposed a way of ``counting'' semistable objects in a 3-d Calabi--Yau triangulated $A_\infty$-category, motivated by analogies with mirror symmetry.
The invariants depend on the following three pieces of data, analogous to a 3-d Calabi--Yau variety, spin structure, and K\"ahler class, respectively.

\begin{enumerate}
\item An ind-constructible, 3CY $A_\infty$-category, $\mathcal C$, over a field of characteristic zero,
\item Orientation data on $\mathcal C$: a square-root of the determinant line bundle $\mathrm{Det}(\mathrm{Ext}^\bullet_{\mc C}(X,X))$, see also Subsection~\ref{subsec_orientation}, and
\item A stability condition on $\mathcal C$ (a variant of the definition in Subsection~\ref{subsec_stability} adapted to the ind-constructible setting).
\end{enumerate}

Part of the data of a stability condition is a map $\mathrm{cl}:K_0(\mc C)\to \Gamma$ to a finite rank lattice with anti-symmetric pairing so that $\mathrm{cl}$ sends the Euler pairing $(X,Y)\mapsto\chi \Ext^\bullet(X,Y)$ to this pairing on $\Gamma$.
For $\gamma\in\Gamma$ with $Z(\gamma)\neq 0$ one defines an invariant $A(\gamma)_{mot}$ in a certain ring of motives which counts semistable objects of phase $\phi=\mathrm{Arg}(Z(\gamma))$ in the class $\gamma$, in a suitable sense.
More precisely,
\[
A(\gamma)_{mot}=\int_{\mc C_{\phi,\gamma}}w(E)
\]
where $\mc C_{\phi,\gamma}$ is the constructible stack of semistable objects with phase $\phi$ and class $\gamma$, the integral sign is pushforward of motives to the point, and $w$ is the motivic weight (measure) given by
\[
w(E)=\mathbb{L}^{\frac{1}{2}\left(\sum_{i\leq 1}(-1)^i\dim\mathrm{Ext}^i(E,E)-\mathrm{rk}Q_E\right)}(1-MF(E))(1-MF_0(Q_E)).
\]
In the above formula $E$ is an object in $\mc C$, $\mathbb{L}$ is the motive of $\mathbb A^1$, $MF(E)$ is the Milnor fiber of the potential 
\[
W_E^{min}(\alpha):=\sum_{n\geq 3}\langle \mk m_{n-1}(\alpha,\ldots,\alpha),\alpha\rangle
\]
where $\alpha\in\mathrm{Ext}^1(E,E)$ and $\mk m_n$ are the $A_\infty$-operations of the minimal model of $\mc C$ (where $\mk m_1=0$), and the $Q_E$ terms come from orientation data on $\mc C$.

\begin{remark}
Some informal remarks which may shed light on the above formula:
Critical points of $W_E^{\min}$ correspond to objects of $\mc C$ in a formal neighborhood of $E$. 
The Milnor fiber provides a way of counting critical points ``with multiplicity'', compensating for the fact that the potential is typically not Morse--Bott.
Passing to the $A_\infty$-minimal model removes quadratic terms from the potential (which would come from $\mk m_1$). 
However, the motivic Milnor fiber is not completely invariant under removing quadratic terms, a problem which is fixed by orientation data.
\end{remark}

To simplify matters, it is convenient to pass from motives to rational functions in $\sqrt{q}$ by applying the (twisted) Serre polynomial $S$, which sends $\sqrt{\LL}$ to $\sqrt{q}$. 
Thus let $A(\gamma)_q:=S(A(\gamma)_{mot})$.
Assume that there is a unique primitive class $\gamma\in\Gamma$ with given phase $\mathrm{Arg}(Z(\gamma))=\phi$, which holds for generic stability condition.
Suppose further that there is a factorization
\[
\sum_{n\geq 0}A(n\gamma)_qx^n=\prod_{n=1}^\infty\prod_{k\in\ZZ}\mathbf E\left((-q^{1/2})^kx^n\right)^{(-1)^k\Omega_{k}(n\gamma)}
\]
where $\mathbf E$ is the quantum dilogarithm as in Appendix~\ref{sec_qdilog} and $\Omega_k(n\gamma)\in\ZZ$ with $\Omega_k(n\gamma)=0$ for all but finitely many $k\in\ZZ$, depending on $n$.
(This is \textit{admissibility} in the sense of~\cite{ks_coha}, except that $q^{1/2}$ is replaced by $q^{-1/2}$.) 
The Laurent polynomials in $q^{1/2}$
\[
\Omega(\gamma):=\sum_{k\in\ZZ}\Omega_k(\gamma)q^{k/2}
\]
are then the \textit{(quantum/refined) DT invariants} and the \textit{(numerical) DT invariants} are $\Omega(\gamma)_{num}:=\Omega(\gamma)(-1)$, i.e. setting $q^{1/2}=-1$.

The ambitious scope of~\cite{ks} meant that the some crucial steps in the construction of the invariants were left as conjectures and some parts only sketched.
Many of those conjectures were later resolved, at least partly. 
The \textit{integral identity} was proven for algebraically closed fields by Thuong~\cite{thuong}.
The integrality conjecture was proven in the case of quivers with potential by Davison--Meinhardt~\cite{davison_meinhardt20}.
Orientation data was constructed in the case of derived categories of Calabi--Yau threefolds by Joyce--Upmeier~\cite{joyce_upmeier}.

A special case which is nowadays very thoroughly understood is that of 3CY categories constructed from quivers with (polynomial) potential, see for instance~\cite{davison_meinhardt15}.
While the 3CY categories $\mc C_{g,n}$ constructed in this paper are \textit{not} of this form, it turns out that the subcategories of semistable objects of a given slope do have a description in terms of a certain finite list of simple quivers (see Subsection~\ref{subsec_dtcomp}), so the construction of the DT invariants does not depend on any of the above mentioned conjectures.

\subsection{Skew-symmetric pairing}
\label{subsec_pairing}

Recall from Subsection~\ref{subsec_spaces} the isomorphism of abelian groups
\begin{equation} \label{K0_H1_iso}
K_0(\mc C(S,\nu))\cong H_1(S,M;\ZZ\otimes_{\ZZ/2} \Sigma_\nu)=:\Gamma_{S,\nu}
\end{equation}
where $\Sigma_\nu\to S\setminus M$ is the double cover of orientations of $\nu$.
Since $\mc C(S,\nu)$ is proper over $\mathbf k$, the Grothendieck group $K_0(\mc C(S,\nu))$ has an \textit{Euler pairing}
\[
(X,Y)\mapsto \sum_i(-1)^i\dim_{\mathbf k}\Ext^i(X,Y)
\]
which is skew-symmetric since $\mc C(S,\nu)$ is odd Calabi--Yau.
The goal of this subsection is to show that this pairing corresponds to a natural geometric pairing on $H_1$ under the isomorphism~\eqref{K0_H1_iso}, which is moreover non-degenerate up to torsion.

For brevity, write $\tau:=\ZZ\otimes_{\ZZ/2}\Sigma_\nu$ for the twisted local system.
First, in order to apply Lefschetz duality (Poincar\'e duality for manifolds with boundary), we replace $S$ by its real blow up $\widehat{S}$ as in Section~\ref{sec_3cysurf}.
Then
\[
H_1(S,M;\tau)\cong H_1(\widehat{S},\partial\widehat{S};\tau)
\]
and Lefschetz duality provides an isomorphism
\begin{equation}
\label{PD_iso}
H_1(S;\tau)\to H^1(S,\partial S;\tau)
\end{equation}
since $\tau\otimes_{\ZZ}\tau\cong \ZZ$.
Furthermore, the universal coefficient theorem gives a pairing
\begin{equation}\label{ho_coho_pairing}
H^1(\widehat{S},\partial\widehat{S};\tau)\otimes H_1(\widehat{S},\partial\widehat{S};\tau)\to\ZZ
\end{equation}
which is non-degenerate up to torsion.
Also, as part of the long exact sequence of the pair $(\widehat{S},\partial\widehat{S})$ we get
\[
0\to H_1(\widehat{S};\tau)\to H_1(\widehat{S},\partial \widehat{S};\tau)\to H_0(\widehat{S};\tau)
\]
where $H_0(\widehat{S};\tau)$ is a product of $\ZZ/2$'s, since $\tau$ has non-trivial monodromy around each component of $\partial\widehat{S}$.
Thus, the first map can be inverted on $2H_1(\widehat{S},\partial \widehat{S};\tau)$ and we consider the composite map
\[
H_1(\widehat{S},\partial \widehat{S};\tau)\xrightarrow{2\cdot} 2H_1(\widehat{S},\partial \widehat{S};\tau) \to H_1(\widehat{S};\tau)\to H^1(\widehat{S},\partial \widehat{S};\tau)
\]
where the third map is~\eqref{PD_iso}.
Combining this map with the pairing~\eqref{ho_coho_pairing} we finally obtain a pairing
\[
H_1(\widehat{S},\partial\widehat{S};\tau)\otimes H_1(\widehat{S},\partial\widehat{S};\tau)\to\ZZ
\]
which is non-degenerate up to torsion.

\begin{prop}
The pairing on $H_1(S,M;\ZZ\otimes_{\ZZ/2}\Sigma_\nu)=H_1(\widehat{S},\partial\widehat{S};\tau)$ constructed above coincides with the Euler pairing on $K_0(\mc C(S,\nu))$ under the isomorphism~\eqref{K0_H1_iso}.
\end{prop}

\begin{proof}
Fix an arc system $\XX$ on $S$ and a choice of grading of each arc.
The classes of $G(X)\in\mc C(S,\nu)$, $X\in\XX$, generate $K_0(\mc C(S,\nu))$ by construction.
On the other hand, each $X\in\XX$ comes with an orientation determined by the grading (c.f. Subsection~\ref{subsec_Z2}) and thus belongs to a well-defined class in $H_1(S,M;\ZZ\otimes_{\ZZ/2}\Sigma_\nu)$, which is also the image of $G(X)$ under the isomorphism~\eqref{K0_H1_iso}.
To check the statement of the proposition, it thus suffices to compute the value of both pairings on pairs of arcs in $\XX$.

\begin{figure}
\centering
\includegraphics[scale=1]{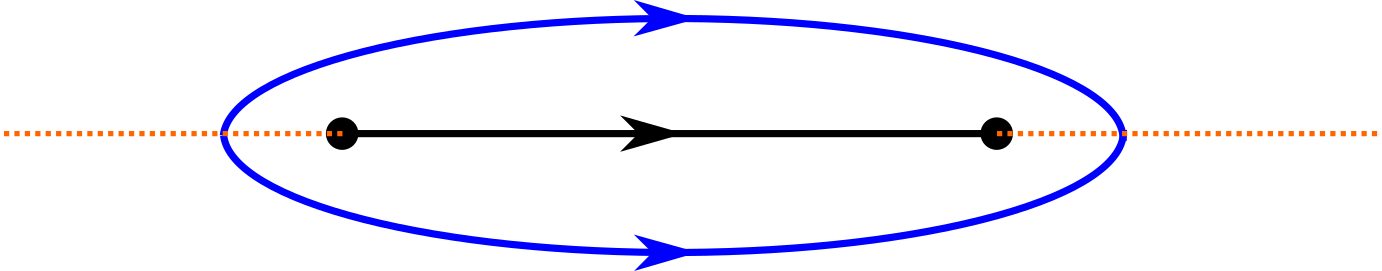}
\caption{Lifting an arc to a class in $H_1(S\setminus M;\ZZ\otimes_{\ZZ/2}\Sigma_\nu)$. Branch cuts for $\Sigma_\nu$ are indicated as dotted lines. The choice of sheet of $\Sigma_\nu$ along the curves should be constant away from the branch cuts. Since the monodromy of $\Sigma_\nu$ around the endpoints is non-trivial, in order to obtain a cycle, the orientation switches as one crosses a branch cut.}
\label{fig_arclift}
\end{figure}

For a pair of arcs $X,Y\in\XX$ write $\langle GX,GY\rangle$ for the pairing on $K_0(\mc C(S,\nu))$ and $\langle X,Y\rangle$ for the pairing on $H_1(S,M;\ZZ\otimes_{\ZZ/2}\Sigma_\nu)$.
The latter can be computed as follows.
The image of $X$ under the map
\[
H_1(S,M;\ZZ\otimes_{\ZZ/2}\Sigma_\nu)\to H_1(S;\ZZ\otimes_{\ZZ/2}\Sigma_\nu)
\]
used in the definition of the pairing is represented by a closed loop, $\widetilde{X}$, which goes once around $X$, see Figure~\ref{fig_arclift}.
This is because the image of $\widetilde{X}$ in $H_1(S,M;\ZZ\otimes_{\ZZ/2}\Sigma_\nu)$ under the natural map is \textit{twice} the class of $X$, as one can see from the figure.
The pairing $\langle X,Y\rangle$ is then just the intersection number between $\widetilde X$ and $Y$.
In particular, $\langle X,X\rangle=0$ whenever $X$ has distinct endpoints, since $\widetilde X$ and $X$ do not intersect, but also $\langle X,X\rangle=0$ if the endpoints of $X$ coincide, since the two contributions to the intersection number cancel, as one can easily check.

For a pair of distinct arcs $X,Y$, both pairings are a sum over pairs consisting of an end of $X$ and an end of $Y$ which meet in the same point in $M$.
For $\langle X,Y\rangle$ this follows from the description as intersection number of $\widetilde X$ and $Y$, and for $\langle GX,GY\rangle$ this follows from the description of $\mathrm{Ext}^\bullet(GX,GY)$ in terms of boundary paths, Proposition~\ref{prop_3cybasis}.

\begin{figure}
\centering
\includegraphics[scale=1]{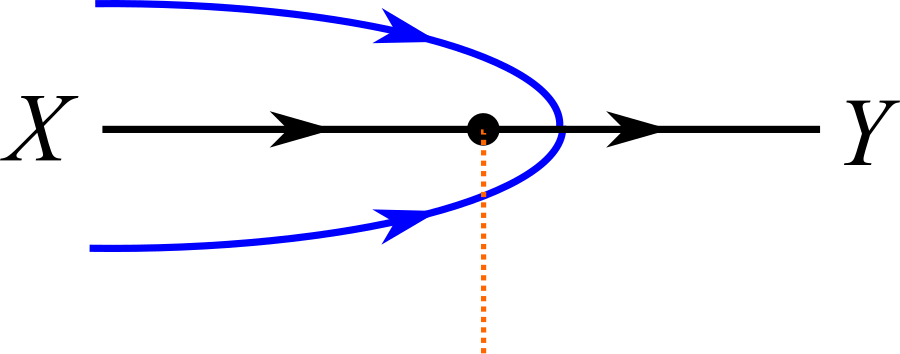}
\caption{Two arcs meeting at an endpoint. The branch cut for $\Sigma_\nu$ is indicated as dotted line. The choice of sheet of $\Sigma_\nu$ along the curves should be constant away from the branch cuts. The contribution to both pairings is $+1$.}
\label{fig_arcint}
\end{figure}

Suppose first, that the point where an end of $X$ meets an end of $Y$ is a zero. 
Then the contribution to both pairings is $\pm 1$. To check the sign, consider the situation of Figure~\ref{fig_arcint}, where the contribution to $\langle X,Y\rangle$ is $+1$, and there is a boundary path of even degree from $X$ to $Y$, likewise giving a contribution of $+1$ to $\langle GX,GY\rangle$.
If the arcs meet in a pole instead of zero, then there are additional boundary paths from $X$ to $Y$ to consider, but these cancel when taking the alternating sum of $\dim_{\mathbf k}\mathrm{Ext}^i(X,Y)$.
\end{proof}

\subsection{Genericity condition}
\label{subsec_generic}

In this subsection we study the geometric consequences of imposing a certain genericity condition, considered in~\cite{ks} in the general context of stability structures, on a point in $\mc Q_{S,\nu}$. 
We fix throughout a punctured surface $(S,M)$ with grading $\nu$ coming from a quadratic differential with simple zeros and poles.
The following definition already appeared in the introduction, but we repeat it here for convenience.
\begin{df}
A point $(J,\varphi,h)$ in $\mc Q_{S,\nu}$ is \textbf{generic} if given any pair $\gamma_1,\gamma_2\in\Gamma_{S,\nu}$ such that
\begin{enumerate}
\item
$\gamma_1$ and $\gamma_2$ are classes of saddle connections or flat cylinders, and
\item
$Z(\gamma_1)\in\RR_{>0} Z(\gamma_2)$,
\end{enumerate}
then $\gamma_1\in\QQ\gamma_2$.
\end{df}
The name is justified: The set of non-generic points is contained in the union of countably many hypersurfaces $W_L$, indexed by rank 2 sublattices $L\subset\Gamma$, where $Z(L)\subset\CC$ is contained in a line.
Thus, the set of generic points is generic in the usual sense of measure theory or point set topology.

Given a representative $(J,\varphi,h)$ of a point in $\mc Q_{S,\nu}$ we consider the subset $C$ of $S$ which is the closure of the set of finite-length horizontal trajectories, i.e. the union of those closed geodesic loops and saddle connections which have horizontal slope.
The following proposition shows that under the genericity condition the components of $C$ come in just a few basic types.

\begin{prop}
\label{prop_gencomp}
Let $C\subset S$ be as above, then each connected component of $C$ is either a single saddle connection or a flat cylinder with each boundary of one of the following types:
\begin{enumerate}
\item a saddle connection starting and ending at the same zero,
\item a saddle connection between two poles (necessarily distinct), or
\item a pair of saddle connections joining a pair of zeros, with a flat torus attached on the other side of the boundary. 
\end{enumerate}
(See Figure~\ref{fig_cylend}.)
\end{prop}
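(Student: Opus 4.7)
The plan is to combine the classical structure theorem for horizontal foliations of quadratic differentials with Gauss--Bonnet applied to each horizontal cylinder, and to exploit the genericity hypothesis to rule out parasitic configurations. First, I would invoke the standard decomposition: away from its critical graph, the horizontal foliation splits $S$ into finitely many maximal open horizontal cylinders and minimal subsurfaces. Consequently $C$ is the closure of the union of the horizontal cylinders and the horizontal saddle connections, and each connected component of $C$ either contains at least one cylinder or consists purely of saddle connections.

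Second, fix a component $K\subset C$ containing a horizontal cylinder $C_0$ and apply Gauss--Bonnet to $C_0$. Since $C_0$ is flat with $\chi(C_0)=0$ and the boundary is piecewise geodesic with corners only at cone points of $S$, the sum of interior angles at all corners equals $n\pi$, where $n$ is the total number of corners over both boundary circles. Every interior angle is at least $\pi$ (by geodesicity and maximality of $C_0$) and is an integer multiple of $\pi$ (the angle between two horizontal prongs), so each interior angle is exactly $\pi$. Thus $C_0$ occupies a single horizontal sector at each corner; each cone point appears at most once as a corner of $C_0$; and a pole on the boundary absorbs the whole $\pi$ of its cone angle on the cylinder side, so the other-side region $R$ does not meet the pole.

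Third, classify each boundary circle as a cyclic sequence of arcs. A one-arc boundary circle is a saddle connection loop at one cone point; a pole's single horizontal prong rules out a loop there, so the cone point is a zero and we are in case~(1). A two-arc boundary circle either identifies both arcs with a single saddle connection $\sigma$ traversed in opposite senses, or consists of two distinct saddle connections. In the first sub-case, the boundary cycle represents the class $[\sigma]\pm[\sigma]$ in $\Gamma_{S,\nu}=H_1(S,M;\ZZ\otimes\mathrm{or}(\nu))$, where the sign is determined by whether the cylinder boundary wraps all the way around each endpoint and thereby picks up the monodromy of $\mathrm{or}(\nu)$. Wrapping all the way around happens precisely when the cylinder absorbs the whole cone angle on its side, i.e.\ at a pole, which produces the non-trivial cycle $2[\sigma]$ of case~(2); at a zero the cylinder occupies only $\pi$ of the $3\pi$ cone angle, does not go around the zero, and the two traversals cancel in twisted homology, ruling out this configuration at zeros. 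In the second sub-case the one-prong constraint at a pole forbids two distinct incident saddle connections, so both endpoints are distinct zeros and we are in case~(3). Boundary circles with three or more arcs are ruled out by applying Gauss--Bonnet to the region $R$ on the other side: the integrality of $\chi(R)$, together with genericity (which excludes additional horizontal finite-length trajectories in $R$ with classes not proportional to the core of $C_0$), forces $n\leq 2$.

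Fourth, for case~(3) the region $R$ has two smooth corners (interior angle $3\pi-\pi=2\pi$ at each of its two zero corners) and, by the same genericity argument, contains neither interior cone points nor interior horizontal cylinders. Gauss--Bonnet then gives $\chi(R)=-1$, and since $R$ is oriented with one boundary circle it is a genus-one surface with a disk removed; gluing the disk back along the cylinder's cross-section produces the flat torus of the statement. A parallel argument shows $K$ contains no cylinder other than $C_0$ and that components of $C$ disjoint from every cylinder reduce to single saddle connections. The main obstacle throughout is converting each forbidden topological configuration into a genuinely $\QQ$-linearly independent pair of classes in $\Gamma_{S,\nu}$ so that genericity can be invoked; the key ingredient is that each boundary cycle represents the cylinder's core class in the twisted homology, and that the orientation twist $\mathrm{or}(\nu)$ distinguishes the pole and zero cases in the two-arc analysis above.
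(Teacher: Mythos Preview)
Your approach is genuinely different from the paper's, which proceeds by a direct case analysis using the skew-symmetric Euler pairing on $\Gamma_{S,\nu}$: whenever two horizontal saddle connections $A,B$ meet at a zero, the paper either computes $\langle A,B\rangle$ directly or pairs $A,B$ against an auxiliary arc running to an extra cone point, thereby exhibiting $\QQ$-linear independence and contradicting genericity. Your Gauss--Bonnet and angle-counting argument is an attractive alternative for the local structure of a cylinder boundary --- the observation that every interior angle on the cylinder side is exactly $\pi$ is correct and already forces the one-arc boundary to sit at a zero and the ``same saddle connection twice'' two-arc boundary to have both endpoints at poles, without any homology input. But beyond that point Gauss--Bonnet alone does not supply the linear-independence statements that genericity actually consumes.

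The gap is in your third and fourth steps. You assert that genericity ``excludes additional horizontal finite-length trajectories in $R$ with classes not proportional to the core'', and then use this to conclude that $R$ has no interior cone points, so that integrality of $\chi(R)$ forces $n\le 2$ (and pins down $R$ as a holed torus when $n=2$). But genericity says nothing about cone points that are not endpoints of horizontal saddle connections: a zero or pole in the interior of $R$ whose horizontal separatrices all escape into a minimal component contributes $\mp\tfrac12$ to $\chi(R)$ without producing any new horizontal class at all. So you cannot conclude that $R$ is cone-point-free, and without that the integrality constraint is too weak --- for instance a three-arc boundary with a single interior pole in $R$ gives $\chi(R)=-1$, perfectly integral. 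The paper handles exactly this situation by choosing an arc $\alpha$ from the extra cone point to a boundary zero and computing $\langle\alpha,A\rangle$ versus $\langle\alpha,B\rangle$; that pairing step is what your outline is missing. The same issue recurs for components of $C$ containing no cylinder: there is no region $R$ on which to run Gauss--Bonnet, and the reduction to a single saddle connection again requires the pairing argument (the paper's Cases~1 and~3) rather than a ``parallel'' topological one.
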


\begin{figure}[!ht]
\centering
\includegraphics[scale=.9]{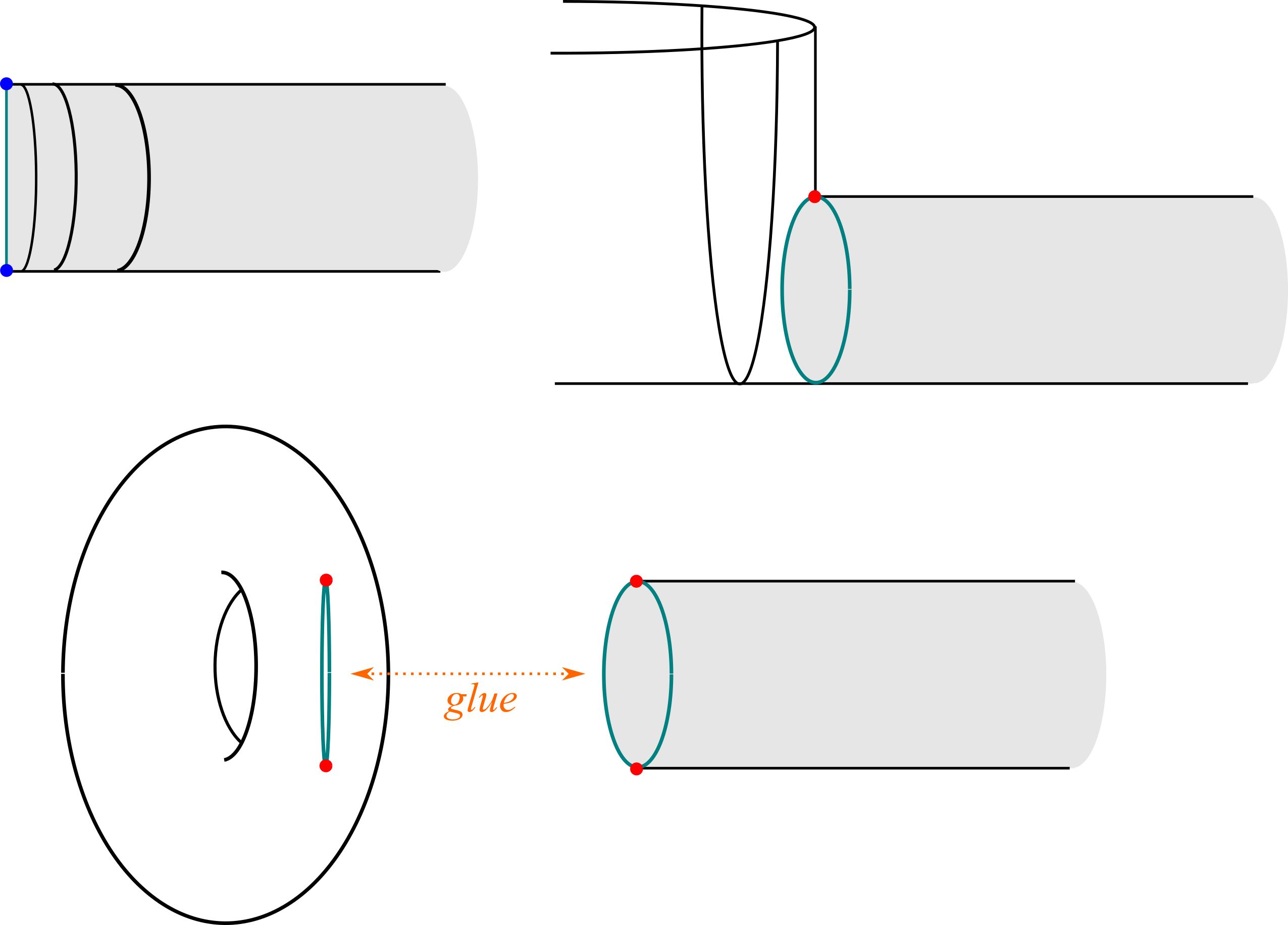}
\caption{Types of boundaries of a flat cylinder (gray). Blue dots are poles and red dots are zeros. \textit{Top left:} Cylinder ending in a saddle connection between poles. \textit{Top right:} Cylinder ending in a closed saddle connection. \textit{Bottom:} Cylinder ending in a pair of saddle connections between zeros, attached to a flat torus. }
\label{fig_cylend}
\end{figure}

\begin{proof}
Let $A$ and $B$ be distinct horizontal saddle connections.
The corresponding classes in $\Gamma_{S,\nu}$, denoted by the same letters, are well-defined up to sign, which depends on a choice of orientation and $\sqrt{\varphi}$ along $A$ and $B$. 
We can fix the sign by requiring that $Z(A)>0$ and $Z(B)>0$.

Suppose $A$ and $B$ meet in a conical point $p$, which is necessarily a zero as there is only a single horizontal leaf ending at a simple pole.
Let $q$ be the other endpoint of $A$, or $q=p$ if $A$ is closed, and similarly $r$ the other endpoint of $B$.
There are several cases to consider.
\begin{enumerate}
\item 
Case $p$, $q$, $r$ all distinct: In this case, $\langle A,B\rangle=\pm 1$, so $A$ and $B$ are $\QQ$-linearly independent, contradicting genericity.
\item
Case $q=r$: First, $p=q=r$ is impossible as there are only three horizontal leaves ending at a simple zero.
Also, $q$ cannot be a pole.
Thus we have two zeros connected by a pair of saddle connections.
Since $A$ and $B$ meet in a pair of points, we must have $\langle A,B\rangle\in\{-2,0,2\}$.
\begin{enumerate}
\item 
Subcase $\langle A,B\rangle = \pm 2$: Again, $A$ and $B$ are $\QQ$-linearly independent, contradicting genericity.
\item
Subcase $\langle A,B\rangle = 0$: In this case there is a flat cylinder, $Y$, which abuts in $A\cup B$.
To see this, note that the concatenation of $A$ and $B$ is a geodesic loop which meets two conical points, and after translating it slightly it becomes a geodesic loop which does not meet any conical points.
\begin{enumerate}
\item Sub$^2$case $Y$ non-separating:  There must be another conical point, $s\in S$, which is distinct from $p$ and $q$. Choose an embedded path $\alpha$ in $S$ which starts at $p$, avoids $Y$, and ends at $s$. 
Then $\langle \alpha,A\rangle=-\langle \alpha,B\rangle$. If $A$ and $B$ were $\QQ$-linearly dependent, they would thus necessarily be negatives of each other, but this contradicts $Z(A),Z(B)\in\RR_{>0}$, so this configuration contradicts genericity.
\item Sub$^2$case $Y$ separating: Let $W\subset S$ be the region of the surface on the other side of $A\cup B$, i.e. not containing $Y$. 
If $W$ contains another conical point distinct from $p$ and $q$, then we can use the same argument as in the previous sub$^2$case to conclude that the configuration is not generic.
Otherwise, if $W$ contains no other conical points, then by detaching $Y$ and gluing $A$ to $B$ we obtain a flat surface without conical points, which must be a flat $T^2$. Thus $W$ is a flat torus with the rest of $S$ attached along $A\cup B$.
Also, $A$ and $B$ have the same class in $\Gamma_{S,\nu}$ in this case.
\end{enumerate}
\end{enumerate}
\item Case $p=q\neq r$: In this case $A$ is closed, so there is a flat cylinder, $Y$, which abuts in $A$.
\begin{enumerate}
\item Subcase $r$ is a pole: There is another flat cylinder, $Y'$, which abuts in the cyclic sequence $ABB$ of saddle connections. Furthermore, $Y\neq Y'$, as the circumferences, $Z(A)$ and $Z(A)+2Z(B)$, are not equal.
Choose an embedded path $\alpha$ in $Y'$ which starts at $r$ and ends at a conical point on the other boundary of $Y'$.
Then $\langle \alpha,B\rangle=\pm 1$ but $\langle\alpha,A\rangle=0$, so $A$ and $B$ are $\QQ$-linearly independent and this contradicts genericity.
\item Subcase $r$ is a zero: 
\begin{enumerate}
\item Sub$^2$case $Y$ is non-separating: There is a conical point, $s$, not equal to $p$ or $r$. 
Choose an embedded path $\alpha$ in $S$ which connects $s$ to $r$ and avoids $Y$, then $\langle \alpha,A\rangle=0$ but $\langle \alpha,B\rangle=\pm 1$, so $A$ and $B$ are $\QQ$-linearly independent, contradicting genericity.
\item Sub$^2$case $Y$ is separating: Let $W$ be the region of $S$ attached to $Y$ along $A$. 
If $W$ contains conical points besides $p$ and $r$, then we can use the same argument as in the previous sub$^2$case to derive a contradiction. 
If $W$ contains no other conical points, then we get a relation $A+2B=0$ in $\Gamma_{S,\nu}$, contradicting $Z(A),Z(B)\in\RR_{>0}$.
\end{enumerate}
\end{enumerate}
\end{enumerate}
Note that the only case above which did not lead to a contradiction is that of a pair of saddle connections joining zeros and forming the boundary of a flat cylinder along which a flat $T^2$ is attached.

Consider now one of the boundaries of a flat cylinder $Y$ in $S$ foliated by closed horizontal leaves.
As one approaches the boundary, the closed leaves break into a cyclic sequence of horizontal saddle connections and conical points, possibly with some of them repeating.
If the boundary consists of a single saddle connection, then it must be either closed, connecting a zero to itself, which is the first case in the proposition, or a saddle connection between a pair of poles, which is the second case in the proposition.
If the boundary consists of multiple saddle connections, then by the above analysis the only possible configuration is a pair of saddle connections connecting a pair of zeros, which is the third case in the proposition.
\end{proof}

\subsection{Orientation data}
\label{subsec_orientation}

We recall, briefly, the definition of orientation data from~\cite{ks}.
Let $\mc C$ be an ind-constructible $\mathbf k$-linear 3CY category.
There is a super line bundle, $D$, on the ind-constructible stack $\mathrm{Ob}(\mc C)$ with fiber $D_X=\mathrm{Det}(\Ext^\bullet(X,X))$ over $X\in\mathrm{Ob}(\mc C)$.
Moreover, on the ind-constructible stack of triangles $X\to Y\to Z\to $ in $\mc C$ there is a canonical isomorphism
\begin{equation}\label{triangle_det_iso}
D_Y\cong D_X\otimes D_Z\otimes \mathrm{Det}(\Ext^\bullet(X,Z))^{\otimes 2}
\end{equation}
of super line bundles.
\textit{Orientation data} on $\mc C$ is given by a super line bundle $\sqrt{D}$ together with a choice of isomorphism $\sqrt{D}^{\otimes 2}\xrightarrow{\cong}D$ such that there exists an isomorphism of super line bundles on the ind-constructible stack of triangles \[
\sqrt{D}_Y\cong \sqrt{D}_X\otimes\sqrt{D}_Z\otimes\mathrm{Det}(\Ext^{\bullet}(X,Z))
\]
whose square is~\eqref{triangle_det_iso}.

We note that the definition of the DT invariants requires only the choice of  $\sqrt{D}$ with $\sqrt{D}^{\otimes 2}\cong D$ --- and only for semistable objects. 
The compatibility with triangles is needed to show multiplicativity of the integration map from the Hall algebra to the quantum torus, and ultimately the wall-crossing formula.

A candidate for $\sqrt{D}$ seems, at first, to be the constructible super line bundle $D_{\leq 1}$ with fiber
\[
\left(D_{\leq 1}\right)_X=\mathrm{Det}\left(\Ext^{\leq 1}(X,X)\right)
\]
which comes with a natural isomorphism $\left(D_{\leq 1}\right)^{\otimes 2}\cong D$ thanks to the 3CY structure.
However, $D_{\leq 1}$ fails to be compatible with triangles in general.
The obstruction is the possible non-triviality of the super line bundle with fiber 
\begin{equation}\label{leq1_obstruction}
\left(D_{\leq 1}\right)_Y\otimes \left(D_{\leq 1}\right)_X^{-1}\otimes \left(D_{\leq 1}\right)_Z^{-1}\otimes\mathrm{Det}\left(\Ext^{\bullet}(X,Z)\right)^{-1}
\end{equation}
over a triangle $X\to Y\to Z\to $.
Giving orientation data for $\mc C$ is then equivalent to tensoring $D_{\leq 1}$ by a suitable square root of the trivial line bundle which cancels the obstruction, see~\cite{ks} for more details.

Suppose that $\mc C$ is actually $\ZZ\times \ZZ/2$-graded and that the 3CY pairing is odd with respect to the additional $\ZZ/2$-grading. 
This is the case for the category $\mc C=\mathrm{Tw}_{\ZZ/2}(\mc A_\XX)$ constructed in Section~\ref{sec_3cysurf}, from which $\mc C(S,\nu)$ is obtained by passing to the triangulated completion of the full subcategory which is the image of the functor $G$ from $\mc F(S\setminus M,\nu)$.
In general, if $V$ is a $\ZZ\times\ZZ/2$-graded vector space with non-degenerate pairing $V\otimes V\to\mathbf{k}[-3,1]$, then there are canonical isomorphisms
\begin{align*}
\mathrm{Det}\left(V^{\bullet,0}\right)&\cong\mathrm{Det}\left(V^{\leq 1,0}\right)\otimes \mathrm{Det}\left(V^{\geq 2,0}\right) \\
&\cong \mathrm{Det}\left(V^{\leq 1,0}\right)\otimes \mathrm{Det}\left(V^{\leq 1,1}\right) \cong \mathrm{Det}\left(V^{\leq 1,\bullet}\right)
\end{align*}
where duality induces the isomorphism $V^{\geq 2,0}\cong \left(V^{\leq 1,1}\right)^\vee[-3]$ and the additional $\ZZ/2$-grading is not used in the definition of $\mathrm{Det}$.
Thus, if $D_+$ is the super line bundle with fiber $\left(D_+\right)_X=\mathrm{Det}\left(\mathrm{Ext}^{\bullet,0}\right)$, then $D_+\cong D_{\leq 1}$.
Moreover, the obstruction~\eqref{leq1_obstruction} vanishes on triangles classified by $\delta\in\Ext^{1,0}(Z,X)$ as follows from the existence of the canonical isomorphisms
\[
\mathrm{Det}\left(\Ext^{\bullet,\bullet}(X,Z)\right)\cong \mathrm{Det}\left(\Ext^{\bullet,0}(X,Z)\right)\otimes \mathrm{Det}\left(\Ext^{\bullet,0}(Z,X)\right)
\]
coming from the duality pairing, and
\[
\left(D_+\right)_Y\cong \left(D_+\right)_X\otimes \left(D_+\right)_Z\otimes \mathrm{Det}\left(\Ext^{\bullet,0}(X,Z)\right)\otimes \mathrm{Det}\left(\Ext^{\bullet,0}(Z,X)\right)
\]
coming from compatibility of the long exact sequences with the additional $\ZZ\times \ZZ/2$-grading.

It is not clear, first, whether $D_+\cong D_{\leq 1}$ is compatible with triangles classified by $\delta\in \Ext^{1,\bullet}(Z,X)$ of mixed parity, and second whether the orientation data extends to the triangulated completion $\mathrm{Tw}^+(\mc C)$, where we forget the additional $\ZZ/2$ grading on $\mc C$.
In this sense we have only \textit{partial orientation data} on $\mathrm{Tw}^+(\mc C)$, in particular on the categories $\mc C(S,\nu)$.
However, it turns out that this is good enough for our purposes:

\begin{prop}
Let $\sigma\in\mathrm{Stab}(\mc C(S,\nu))$ lie in the distinguished component identified with $\mc Q_{S,\nu}$ by Theorem~\ref{thm_stab_C}.
Then all $\sigma$-semistable objects lift to the $\ZZ\times\ZZ/2$-graded category $\mathrm{Tw}_{\ZZ/2}(\mc A_\XX)$, as do Harder--Narasimhan filtrations of objects in the essential image of the functor from $\mc F(S\setminus M,\nu)$, in particular all objects in the heart of the t-structure given by $\sigma$.
\end{prop}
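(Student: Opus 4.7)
The key observation is that, by the construction underlying Theorem~\ref{thm_stab_C}, $\sigma$ is the push-forward $G_\ast(\sigma')$ of a stability condition $\sigma'$ on $\mc F(S\setminus M,\nu)$ along the adjunction functor $G$, to which Theorem~\ref{thm_stab_transfer} applies. The plan is to trace semistable objects, heart objects, and HN filtrations back through $G$ and use Lemma~\ref{lem_inv_trivial} to show that everything in the essential image of $G$ automatically lifts to $\mathrm{Tors}(\mc A)$.

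First, I would verify that the functor $G:\mc F(S\setminus M,\nu)\to\mc C_{S,\nu}$ factors through $\mathrm{Tors}(\mc A(\XX))$, with the $\ZZ/2$-grading forgotten only at the final stage where one passes from $\mathrm{Tors}(\mc A)$ to $\mathrm{Tw}_{\ZZ}(\mathrm{Tors}(\mc A))=\mc C_{S,\nu}$. By the construction of Subsection~\ref{subsubsec_basechange}, $G$ is obtained from $F^\ast\circ Y_{\mc A_0}:\mc A_0\to\mathrm{Tors}(\mc A)$ by passing to twisted complexes, so its image lies in $\mathrm{Tw}(\mathrm{Tors}(\mc A))\subseteq \mathrm{Tors}(\mc A)$. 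The identification $\mc F(S\setminus M,\nu)\simeq \mathrm{Tw}(\mc A(\XX)_0)$ of Lemma~\ref{lem_inv_trivial} then shows that every object of $\mc F$ admits an even-parity ($\ZZ\times\ZZ/2$-graded) presentation, and its $G$-image therefore has a canonical lift to $\mathrm{Tors}(\mc A)$.

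Second, I would invoke Theorem~\ref{thm_stab_transfer}: the semistable subcategory of phase $\phi$ for $\sigma=G_\ast\sigma'$ is $G(\mc F_\phi)$, and — as recorded in the proof of that theorem via the transferred t-structure — the heart of $\sigma$ is $G(\mc A')$, where $\mc A'$ is the heart of $\sigma'$. Combined with the first step, this shows at once that every $\sigma$-semistable object and every object in the heart of $\sigma$ lies in the essential image of $G$, hence in $\mathrm{Tors}(\mc A)$. Third, for any $E=G(E')$ with $E'\in\mc F$, the preservation of Harder--Narasimhan filtrations built into the definition of $G_\ast$ (see the proof of Theorem~\ref{thm_stab_transfer}) shows that the HN filtration of $E$ in $\mc C_{S,\nu}$ is the $G$-image of the HN filtration of $E'$ in $\mc F$, so each stage and each subquotient is in $G(\mc F)\subset\mathrm{Tors}(\mc A)$. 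The parenthetical claim about heart objects is just the special case where $E$ itself lies in the heart of $\sigma$, which by the previous paragraph already lies in $G(\mc F)$.

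The only mildly delicate point is the parity bookkeeping in step one — that the string/band representatives of indecomposables in $\mc F$ arising from \cite{hkk} assemble into a twisted complex in $\mathrm{Tw}(\mc A(\XX)_0)$ rather than only in $\mathrm{Tw}_\ZZ(\mc A(\XX)_0)$ — but this is precisely what Lemma~\ref{lem_inv_trivial} guarantees. I do not foresee any genuine obstacle beyond this bookkeeping.
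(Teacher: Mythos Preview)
Your argument for the first claim (semistable objects lift) and the heart claim is essentially the paper's: semistable objects for $\sigma=G_*\sigma'$ are by construction $G$-images of $\sigma'$-semistables, and Lemma~\ref{lem_inv_trivial} lifts those to $\mathrm{Tw}(\mc A_0)$, hence their $G$-images lie in $\mathrm{Tors}(\mc A)$.

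For the Harder--Narasimhan part, however, there is a genuine gap. You correctly observe that the HN filtration of $G(E')$ is the $G$-image of the HN filtration of $E'$, and that therefore each stage and subquotient lies in the essential image of $G$. But ``lifting the HN filtration to $\mathrm{Tors}(\mc A)$'' requires more than lifting the objects: the triangles themselves must live in the $\ZZ\times\ZZ/2$-graded category, which means the extension classes must lie in $\Ext^{1,0}$ rather than merely $\Ext^1$. Lemma~\ref{lem_inv_trivial} only tells you that objects of $\mc F$ can be represented in $\mathrm{Tw}(\mc A_0)$; once you have chosen such representatives for the $E'_i$ and $A'_i$, the connecting morphisms $A'_i\to E'_{i-1}[1]$ a priori decompose into even and odd parity parts, and nothing in your argument rules out the odd part. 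Since $\mathrm{Tors}(\mc A)$ only admits cones over morphisms of definite bidegree, the triangle need not exist there. This matters for the intended application: the partial orientation data of Subsection~\ref{subsec_orientation} is only shown to be compatible with triangles classified by $\delta\in\Ext^{1,0}$.

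The paper closes this gap not abstractly but by invoking the explicit geometric description of HN filtrations from \cite[Section~5.5]{hkk}: the filtration of an indecomposable (immersed curve with local system) corresponds to cutting the curve into geodesic segments, and one then runs the parity computation from the proof of Lemma~\ref{lem_inv_trivial} with a coherent choice of orientations along the curve to see that all the boundary-path contributions to $\delta$ have $\pi(a_i)=0$. Your sketch would need either this geometric input or an independent argument that mixed-parity extension classes can always be replaced by even ones after re-choosing lifts; the latter is not obvious.
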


The proposition implies that the motivic DT invariants for the given component of $\mathrm{Stab}(\mc C_{g,n})$ are well-defined and that the wall-crossing formula holds as one moves in this component.

\begin{proof}
The first claim follows since semistable objects lift to  $\mc F(S\setminus M,\nu)$ by construction and all objects in that category lift to the $\ZZ\times\ZZ/2$-graded category, see Lemma~\ref{lem_inv_trivial}.
The second claim follows from the explicit description of Harder--Narasimhan filtrations in~\cite[Section 5.5]{hkk} and the proof of Lemma~\ref{lem_inv_trivial}.
Indeed, any indecomposable $X\in \mc F(S\setminus M,\nu)$ is given by an immersed curve $\gamma$ with local system by~\cite{hkk}.
The Harder--Narasimhan filtration of $X$ corresponds to a presentation of $\gamma$ as a concatenation of (geodesic) segments.
Thus, if one chooses an arbitrary orientation on $\gamma$ and the induced orientation on the segments, one sees, as in the proof of Lemma~\ref{lem_inv_trivial}, that $X$ is an upper-triangular deformation of its semistable factors by some $\delta\in\Hom^{1,0}$. 
\end{proof}

\subsection{Proof of the second main theorem}
\label{subsec_dtcomp}

Let $S$ be a compact Riemann surface and $\varphi$ a quadratic differential with simple zeros and/or simple poles on $S$.
Also fix an algebraically closed field $\mathbf k$ with $\mathrm{char}(\mathbf k)=0$.
In Section~\ref{sec_3cysurf} we constructed a proper 3CY category, $\mc C(S,\mathrm{hor}(\varphi))$, and in Section~\ref{sec_stability} a stability condition on it, whose stable objects are, roughly speaking, the finite length geodesics on $(S,\varphi)$, regarded as a flat surface with conical singularities.
In the previous subsection we also constructed (partial) orientation data on $\mc C(S,\mathrm{hor}(\varphi))$.
Thus, the DT invariants of  $\mc C(S,\mathrm{hor}(\varphi))$ are defined.
We compute them in this subsection in terms of numbers of saddle connections and closed cylinders for any $\varphi$ which is generic in the sense of Subsection~\ref{subsec_generic}.
This is our second main theorem, which we repeat here for convenience.

\begin{theorem}
\label{thm2}
Fix a generic pair $(S,\varphi)$ as above, then the refined DT invariants of the corresponding stability condition on $\mc C(S,\mathrm{hor}(\varphi))$ are given by
\[
\Omega(\gamma)=N_{++}(\gamma)+2N_{+-}(\gamma)+4N_{--}(\gamma)+\left(q^{1/2}+q^{-1/2}\right)N_c(\gamma)
\]
where $\gamma\in\Gamma_{S,\mathrm{hor}(\varphi)}$ and $N_{++}(\gamma)$ (resp. $N_{+-}(\gamma)$, resp. $N_{--}(\gamma)$) is the number of saddle connections with class $\gamma$ between distinct zeros (resp. a zero and a pole, resp. distinct poles) of $\varphi$ and $N_c(\gamma)$ is the number of flat cylinders with class $\gamma$.
\end{theorem}

The strategy of the proof is to show that, for a given phase $\phi$, the subcategory of $\mc C(S,\mathrm{hor}(\varphi))$ of semistable objects of phase $\phi$ is, thanks to the genericity condition, a direct sum of (abelian) 3CY categories constructed from certain simple quivers with potential.
By their motivic nature, DT invariants are additive with respect to such direct sums, so it suffices to know the DT invariants of these quivers.

\subsubsection*{3CY categories from quivers with potential}
Suppose $Q$ is a quiver and $\mathbf kQ$ its path algebra. 
A \textit{potential (for $Q$)} is an element $W\in \mathbf kQ/[\mathbf kQ,\mathbf kQ]$, i.e. a linear combination of cyclic paths.

\begin{remark}
It is also possible to consider potentials which are infinite linear combinations of longer and longer paths by replacing $\mathbf kQ$ with its completion with respect to the length filtration, but we will not need this.
\end{remark}

Starting from any quiver with potential, Ginzburg~\cite{ginzburg_cy} constructs a DG-algebra $D(Q,W)$, concentrated in non-positive degrees. 
The associative algebra $H^0(D(Q,W))$ is a quotient of the path algebra of $Q$ by certain relations coming from the potential (its partial first derivatives).
From $D(Q,W)$ we obtain an $\mathrm{Ext}$-finite triangulated 3CY category, denoted $\mc C(Q,W)$, as the subcategory of the derived category of DG-modules over $D(Q,W)$ of modules with finite-dimensional cohomology.
Since $D(Q,W)$ vanishes in positive degrees, $\mc C(Q,W)$ carries a natural bounded t-structure with heart $\mc A(Q,W)\subset \mc C(Q,W)$ equivalent to the abelian category of finite-dimensional modules over $H^0(D(Q,W))$.
We can easily construct stability conditions on $\mc C(Q,W)$ as follows: Fix a number $z_i$ in the upper half-plane for each vertex $i\in Q_0$ of the quiver. 
Then there is a stability condition with heart $\mc A(Q,W)$ and central charge which sends the 1-dimensional $H^0(D(Q,W))$-module concentrated at the $i$-th vertex to $z_i\in\CC$ and depends only on the dimension vector of the module.
In the special case where all $z_i$ have the same argument, all objects in $\mc A(Q,W)$ become semistable, so the DT invariants no longer depend on the $z_i$'s, but only the pair $(Q,W)$.
(There is a natural choice of orientation data on $\mc C(Q,W)$ as explained in~\cite{ks}.)

There is an alternative (Koszul-dual) construction of a 3CY category starting from a quiver with potential described in~\cite{ks}. 
This yields in general a full subcategory of $\mc C(Q,W)$ --- the subcategory generated by nilpotent representations, or equivalently representations over the completed version of $D(Q,W)$. 
The relation with Ginzburg's construction goes as follows: If $A(Q,W)$ is the $A_\infty$-algebra constructed in~\cite{ks}, then the completed version of $D(Q,W)$ is obtained as the topological dual of the reduced cobar coalgebra of $A(Q,W)$ (the tensor coalgebra on the kernel of the augmentation, shifted down by 1).

As an example, consider the quiver $Q$ with a single vertex, a single loop $x$, and potential $W=x^3$.
The algebra defined by Kontsevich--Soibelman has basis $1,x,x^*,y$ of degrees $0,1,2,3$, respectively, relation $xx^*=x^*x=y$ and, coming from the potential, the relation $x^2=x^*$.
This is just the algebra $\mathbf k\langle x\rangle/x^4$ which appears in Proposition~\ref{prop_arcext}.
Taking the dual of the reduced cobar coalgebra yields the DG-algebra $D(Q,W)$ generated by $\alpha,\alpha^*,\beta$ of degrees $0,-1,-2$, respectively, with differential $d\alpha=0$, $d\alpha^*=\alpha^2$, $d\beta=\alpha\alpha^*-\alpha^*\alpha$. 
In particular, $H^0(D(Q,W))=\mathbf k[\alpha]/\alpha^2$.

\begin{prop}
\label{prop_littlequivers}
The table below gives the DT invariants of some quivers with potential.
\begin{center}
\begin{tabular}{c|c|c}
Quiver & Potential & Counting series \\
\hline
\begin{tikzcd} \bullet \end{tikzcd} & $0$ & $\mathbf E(x)$ \\
\begin{tikzcd} \bullet\arrow[loop right] \end{tikzcd} & $0$ & $\mathbf E(-q^{1/2}x)^{-1}$ \\
\begin{tikzcd} \bullet\arrow[loop right,"x"] \end{tikzcd} & $x^3$ & $\mathbf E(x)^2$ \\
\begin{tikzcd} \bullet\arrow[loop left,"x"]\arrow[loop right,"y"] \end{tikzcd} & $x^3+y^3$ & $\mathbf E(x)^4\mathbf E(-q^{1/2}x^2)^{-1}$ \\
\begin{tikzcd} \bullet\arrow[r,bend left] & \bullet\arrow[l,bend left]\end{tikzcd} & $0$ & $\mathbf E(x)^2\mathbf E(-q^{1/2}x^2)^{-1}$ 
\end{tabular}
\end{center}
\end{prop}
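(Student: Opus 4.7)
The plan is, for each quiver with potential $(Q,W)$ in the table, to compute the motivic generating series of the semistable objects with respect to the stability condition putting all simples at the same phase, and then to factor the resulting series into a product of quantum dilogarithms.

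With such a stability condition, the heart of the associated $t$-structure is the abelian category $\mc A(Q,W)$ of finite-dimensional (nilpotent) modules over the Jacobi algebra $J(Q,W)$, and every non-zero object is semistable; hence
\begin{equation*}
\sum_\gamma A(\gamma)_{mot}\, x^\gamma \;=\; \sum_\gamma\left(\int_{\mathfrak M_\gamma}\phi_W\right)x^\gamma,
\end{equation*}
where $\mathfrak M_\gamma$ is the moduli stack of $Q$-representations of dimension vector $\gamma$ and $\phi_W$ is the motivic vanishing cycle of the trace functional of $W$.

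For the two $W=0$ quivers, $\phi_W=1$ and $A(\gamma)_{mot}$ reduces, after the Calabi--Yau shift, to the motivic class of $\mathfrak M_\gamma$. For quiver 1, $\mathfrak M_n=BGL_n$ and the identity
\begin{equation*}
\sum_{n\geq 0}\frac{q^{n^2/2}}{(q;q)_n}x^n=\mathbf E(x)
\end{equation*}
gives the claim. For quiver 2, using that finite-dimensional modules over the completed path algebra of a single loop are precisely those on which the loop acts nilpotently, and that the nilpotent cone has motive $\mathbb L^{n(n-1)}$ in dimension $n$, one obtains
\begin{equation*}
\sum_{n\geq 0}\frac{q^{n(n-1)/2}}{(q;q)_n}x^n=\mathbf E(-q^{1/2}x)^{-1}.
\end{equation*}
For the two single-vertex cubic-potential cases I would apply dimensional reduction to identify $A(\gamma)_{mot}$ with the (shifted) motive of the stack of Jacobi modules over $\mathbf k[x]/(x^2)$ in case 3 and over $\mathbf k\langle x,y\rangle/(x^2,y^2)$ in case 4. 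Stratifying by the rank(s) of the nilpotent endomorphism(s) and summing via $q$-binomial identities then yields $\mathbf E(x)^2$ and $\mathbf E(x)^4\mathbf E(-q^{1/2}x^2)^{-1}$, respectively. For quiver 5 the Jacobi algebra is infinite dimensional; here I would use the classical motivic computation of the stack of representations of a Kronecker-type quiver and then apply the pentagon-type identities for $\mathbf E$ from the appendix to rewrite the resulting two-variable generating series, projected to the appropriate slope, in the claimed form.

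The main obstacle is the final factorisation step: extracting from the motivic computation a product expansion that matches the claimed form requires a non-trivial use of the pentagon identity and careful tracking of the signs imposed by the orientation data from Subsection~\ref{subsec_orientation}. In the cubic-potential cases there is the additional compatibility to verify between the dimensional reduction isomorphism and the chosen square root of the determinant line, without which the factorisation could be off by a sign or by an overall power of $q^{1/2}$.
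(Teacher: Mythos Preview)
Your outline for the first two quivers is fine and matches what the paper does (the paper simply cites \cite{ks} and says the one-loop case is well-known).

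The genuine gap is in your treatment of the cubic-potential cases. Dimensional reduction in the sense of Behrend--Bryan--Szendr\H{o}i or Davison requires the potential to be \emph{linear} in some subset of the arrows; $W=x^3$ and $W=x^3+y^3$ are not of this form. Consequently the motivic weight $\phi_W$ is not the constant motive on the stack of Jacobi modules: for $W=x^3$ the integral $\int_{\mathfrak M_n}\phi_W$ involves the equivariant motive $[\mu_3]$ of cube roots of unity, and this contribution is invisible if you merely stratify $\{X:X^2=0\}$ by rank. The paper instead invokes the computation of Davison--Meinhardt for the one-loop quiver with potential $x^k$, giving $\Omega=\mathbb L^{-1/2}(1-[\mu_k])$, whose twisted Serre polynomial is $k-1$; hence $\mathbf E(x)^2$ for $k=3$. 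For the two-loop case it then applies the motivic Thom--Sebastiani theorem (the potential is a sum in disjoint variables), so the $n$-th coefficient becomes $\bigl(\sum_k\binom{n}{k}_q\bigr)^2$, and the factorisation into $\mathbf E(x)^4\mathbf E(-q^{1/2}x^2)^{-1}$ is a separate $q$-series identity proven combinatorially in the appendix.

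For the fifth quiver the paper's route is also different from yours and considerably cleaner: rather than compute the full two-variable motivic series and then appeal to pentagon-type identities, it chooses a \emph{non}-trivial central charge with $\mathrm{Arg}(z_1)<\mathrm{Arg}(z_2)$, identifies the three non-empty phases (the two simples, and the subcategory where one arrow is an isomorphism, which is equivalent to representations of the one-loop quiver), and reads off the product $\mathbf E(x)\,\mathbf E(-q^{1/2}x^2)^{-1}\,\mathbf E(x)$ directly from the wall-crossing formula. This avoids any explicit motivic computation for the two-vertex stack.
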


\begin{proof}
The first corresponds to the 3CY category generated by a single spherical object, see~\cite[Section 6.4]{ks}. 
The one loop case is similar and well-known. 

The motivic DT invariants of the one-loop quiver with potential were computed in~\cite{davison_meinhardt}.
In particular, for the potential $x^k$, $k\geq 3$, the result is $\Omega=\LL^{-1/2}(1-[\mu_k])$.
Here $[\mu_k]$ is the equivariant motive of $k$-th roots of unity, which the twisted Serre polynomial sends to $1-(k-1)q^{1/2}$.
Thus, the refined DT invariant is simply $k-1$.

For the fourth quiver in the table we use that the one-loop quiver with potential $x^3$ has counting series $\mathbf E(x)^2$ by the above mentioned result, which may be written as
\[
\mathbf E(x)^2=\sum_{n=0}^{\infty}\frac{B_nq^{n^2/2}}{\chi_q(GL_n)}x^n.
\]
where
\[
B_n:=\sum_{k=0}^n\binom{n}{k}_q.
\]
It follows from the Thom-Sebastiani theorem that the counting series for the two-loop quiver with potential $x^3+y^3$ is then
\[
\sum_{n=0}^{\infty}\frac{B_n^2q^{n^2/2}}{\chi_q(GL_n)}x^n=\mathbf E(x)^4\mathbf E(-q^{1/2}x^2)^{-1}
\]
where the equality is Lemma~\ref{lem_w33dt} in the appendix.

For the last quiver in the table a direct computation as for the previous one is possible, however we will instead prove the factorization using the wall-crossing formula.
Write a quiver representation 
\[
\begin{tikzcd}
V_1\arrow[r,bend left,"T_1"] & V_2\arrow[l,bend left,"T_2"]
\end{tikzcd}
\]
as a quadruple $(V_1,T_1,V_2,T_2)$.
Choose $z_1,z_2\in\CC$, with $\mathrm{Im}(z_i)>0$ and $\mathrm{Arg}(z_1)<\mathrm{Arg}(z_2)$, giving a central charge $Z(\dim V_1,\dim V_2):=\dim V_1z_1+\dim V_2z_2$ and a resulting notion of slope stability.
There are three phases in $(0,\pi)$ with non-zero semistable objects.
\begin{enumerate}
\item
$(V,0,0,0)$, semistable of phase $\mathrm{Arg}(z_1)$ 
\item
$(V_1,T_1,V_2,T_2)$ with $T_2$ an isomorphism, semistable of phase $\mathrm{Arg}(z_1+z_2)$
\item 
$(0,0,V,0)$, semistable of phase $\mathrm{Arg}(z_2)$ 
\end{enumerate}
There is a fully faithful functor from the category of representations of the one loop quiver to the category of representations of the present quiver given by $(V,T)\mapsto (V,T,V,1_V)$, and the image is essentially the full subcategory of semistable objects of phase $\mathrm{Arg}(z_1+z_2)$.
It follows from the results for the first two quivers in the table and the wall-crossing formula~\cite{ks}, that the counting series is $\mathbf E(x)\mathbf E(-q^{1/2}x^2)^{-1}\mathbf E(x)$ as claimed.
\end{proof}

We will also need a variant of the previous Proposition where we count nilpotent representations only.
The relation between the two turns out to be very simple.

\begin{prop}
\label{prop_quivers_nil}
For the 2nd, 4th, and 5th quiver with potential listed in Proposition~\ref{prop_littlequivers}, the abelian category $\mc A(Q,W)$ is a direct sum of the full subcategory $\mc A(Q,W)_{\mathrm{nil}}$ of nilpotent representations and a category equivalent to the category of finite dimensional $\mathbf k[x^{\pm 1}]$-modules.
The counting series for  $\mc A(Q,W)_{\mathrm{nil}}$ is obtained from the one for $\mc A(Q,W)$ by replacing the $\mathbf E(-q^{1/2}x^2)^{-1}$ factor by $\mathbf E(-q^{-1/2}x^2)^{-1}$.
\end{prop}

\begin{proof}
For the second quiver the first statement is easy to see geometrically, since $\mc A(Q,W)$ is just the category of torsion sheaves on $\AA^1_{\mathbf k}$, which is a direct sum of the category of sheaves supported at the origin (corresponding to nilpotent representations) and the category category of sheaves supported on $\AA^1_{\mathbf k}\setminus \{0\}$, which is equivalent to the category of finite dimensional $\mathbf k[x^{\pm 1}]$-modules, $\mathrm{Mod}_{\mathrm{fd}}(\mathbf k[x^{\pm 1}])$.

For the fourth quiver, the 2-loop quiver with potential $x^3+y^3$, we are considering modules over the algebra $\mathbf k\langle x,y\rangle/(x^2,y^2)$, which is just a vector space $V$ with two linear endomorphisms $A,B:V\to V$ with $A^2=B^2=0$. 
There is a functor from $\mathrm{Mod}_{\mathrm{fd}}(\mathbf k[x^{\pm 1}])$ which sends a vector space $W$ with automorphism $T:W\to W$ to the triple $(V,A,B)$ with $V:=W\oplus W$, $A(a,b):=(T(b),0)$, $B(a,b):=(0,a)$.
We leave it as a linear algebra exercise to check that this is fully faithful and its image a complement to the nilpotent modules.

For the fifth quiver, the 2-cycle, there is again a geometric interpretation: $\mc A(Q,W)$ is equivalent to the category of $\ZZ/2$-equivariant torsion sheaves on $\AA^1_{\mathbf k}$ with $\ZZ/2$ acting by $x\mapsto -x$, so the same reasoning as in the first example works.

To show the second claim, note that counting series of $\mathrm{Mod}_{\mathrm{fd}}(\mathbf k[x^{\pm 1}])$ is $\mathbf E(-q^{1/2}x)^{-1}\mathbf E(-q^{-1/2}x)$ corresponding to $\Omega=q^{1/2}-q^{-1/2}=q^{-1/2}(q-1)$, where $q-1$ is the Serre polynomial of $\AA^1_{\mathbf k}\setminus 0$.
Thus, the counting series of the complementary category, which consists of nilpotent representations, is obtained by replacing the $\mathbf E(-q^{1/2}x)^{-1}$ factor (found in the corresponding rows of the table in Proposition~\ref{prop_littlequivers}) by $\mathbf E(-q^{-1/2}x)^{-1}$.
\end{proof}

\begin{proof}[Proof of Theorem~\ref{thm2}]
Fix a phase $\phi\in\RR$ and consider, as in Subsection~\ref{subsec_generic}, the set $C\subset S$, depending only on $\phi\in \RR/\ZZ\pi$, which is the union of saddle connections and closed loops of phase $\phi$.
The category $\mc C(S,\nu)_\phi$ of semistable objects of phase $\phi$ is a finite direct sum of categories corresponding to connected components of $C$, hence the DT invariants are a sum of contributions from each such component.
If $C_0\subset C$ is a connected component, then by the genericity assumption and Proposition~\ref{prop_gencomp} it is either a single saddle connection or the closure of a flat cylinder.

Suppose first that $C_0$ is a saddle connection. 
If $C_0$ connects two distinct zeros, then it corresponds to a spherical object in $\mc C(S,\nu)$ by Proposition~\ref{prop_arcext}, thus contributes $1$ to $\Omega(\gamma)$, where $\gamma$ is the class of $C_0$, by Proposition~\ref{prop_littlequivers}.
If $C_0$ connects a zero to itself, then it bounds a flat cylinder of the same phase, contradicting the assumption that $C_0$ consists of a single saddle connection.
If $C_0$ connects a pole to a zero, then the corresponding object $X\in\mc C(S,\nu)$ generates a 3CY category isomorphic to the one coming from the one-loop quiver with potential $x^3$ by Proposition~\ref{prop_arcext}. 
Thus the contribution to the $\Omega(\gamma)$ is $2$ by Proposition~\ref{prop_littlequivers}.
If $C_0$ connects two distinct poles, then it bounds a flat cylinder of the same phase, contradicting the assumption that $C_0$ consists of a single saddle connection.

Suppose now that $C_0$ the closure of a flat cylinder and let $\gamma\in\Gamma$ be its homology class.
We will show that there is a contribution of $2q^{-1/2}$ to $\Omega(\gamma)$ coming from the two boundaries of $C_0$ and a contribution of $q^{1/2}-q^{-1/2}$ coming from the interior of the cylinder, thus a total contribution of $q^{1/2}+q^{-1/2}$, in agreement with the formula in the theorem.

First we consider the boundary of $C_0$.
By Proposition~\ref{prop_gencomp} there are three ways in which the cylinder abuts in a union of saddle connections at each end.
First, consider the case of a closed saddle connection, $X$, with both endpoints at the same zero.
The category generated by $X$ is equivalent to the category of nilpotent representations of the one loop quiver without potential.
The contribution to the refined DT invariant $\Omega(\gamma)$ is $q^{-1/2}$ by Proposition~\ref{prop_quivers_nil}.
Second, if the cylinder abuts in a saddle connection, $X$, between two distinct poles then the contribution to the DT invariant $\Omega(\gamma)$ is again $q^{-1/2}$ but there is also a contribution of $4$ to the DT invariant $\Omega(\gamma/2)$ where $\gamma/2$ is the class of $X$, counting the saddle connection itself.
Finally, if the cylinder abuts in a pair of saddle connections between a pair of zeros, then the contribution to the DT invariant $\Omega(\gamma)$ is again $q^{-1/2}$ and there is also a contribution of $2$ to the DT invariant $\Omega(\gamma/2)$, where $\gamma/2$ is the class of either saddle connection, counting the two saddle connections.
Hence, for all cylinders there is a contribution of $2q^{-1/2}$ to $\Omega(\gamma)$ coming from the two boundaries.

Finally, we consider the interior of $C_0$, which is a cylinder without its boundary, foliated by a one-parameter family of closed geodesics.
As shown in \cite{hkk} and sketched in the proof of Proposition~\ref{prop_loopext}, the isotopy class of these closed geodesics corresponds to a $\mathbf k^\times$-family of spherical objects in $\mc F(S\setminus M,\nu)$. 
The image under $G$ is a family of objects $A_\lambda\in \mc C(S,\nu)$, $\lambda\in\mathbf k^\times$, with $\mathrm{Ext}^\bullet(A_\lambda,A_\lambda)\cong H^\bullet(S^1\times S^2;\mathbf k)$ by Proposition~\ref{prop_loopext}.
Moreover, the $A_\lambda$ are all semistable by Theorem~\ref{thm_stab_C}.
We claim that the full $A_\infty$-subcategory of $\mc C_{S,\nu}$ whose objects are the $A_\lambda$'s is quasi-equivalent to the full subcategory of $\mathbf k[x^{\pm 1},x^*]$-modules, $|x|=0$, $|x^*|=-1$, whose objects are the one-dimensional simple modules in the heart of the canonical t-structure.
Indeed, up to isomorphism there is a $\mathbf k^\times$-family $B_\lambda$ of such simple modules and $\mathrm{Ext}^\bullet(B_\lambda,B_\lambda)\cong H^\bullet(S^1\times S^2;\mathbf k)$ by Koszul duality, where $x$ corresponds to the fundamental class of $S^1$ and $x^*$ to the fundamental class of $S^2$. 
(The $\mathrm{Ext}$-algebra depends only the formal neighborhood of $x=\lambda$, so is unaffected by localizing away from $x=0$.)
Note that $\mathbf k[x,x^*]$ is quasi-isomorphic to the Ginzburg algebra constructed from the one-loop quiver with trivial potential, $\mathbf k\langle x,x^*,t\rangle$, $t=-2$, $dt=[x,x^*]$, which has DT invariant $\Omega=q^{1/2}$ by Proposition~\ref{prop_littlequivers}.
Localizing away from $x=0$ has the effect of deleting the nilpotent representations from the category, so by Proposition~\ref{prop_quivers_nil} we need to subtract $q^{-1/2}$ to get the DT-invariant for $\mathbf k[x^{\pm 1},x^*]$, which is $q^{1/2}-q^{-1/2}$.
\end{proof}

\appendix

\section{Quantum dilogarithm}
\label{sec_qdilog}

This section contains, first, a dictionary for various ways of expressing the quantum dilogarithm found in the literature, and second, a proof of an identity needed Subsection~\ref{subsec_dtcomp}.

The quantum dilogarithm, using the normalization of \cite{ks}, is
\[
\mathbf E(x):=\sum_{n=0}^\infty\frac{(-1)^nq^{n/2}x^n}{(1-q)(1-q^2)\cdots(1-q^n)}
\]
as a formal power series in $x$.
This has an interpretation as ``counting'' objects in the category of finite-dimensional vector spaces, since the coefficient of $x^n$ is $q^{n^2/2}/\chi_q(GL_n)$ where $\chi_q(GL_n)$ is the Serre polynomial of the general linear group. (The unexpected factor $q^{n^2/2}$ is a feature of motivic DT theory.)
The series $\mathbf E(x)$ is essentially the same as the $q$-exponential defined as
\[
\exp_q(x):=\sum_{n=0}^{\infty}\frac{x^n}{[n]_q!}=\mathbf E\left((q^{1/2}-q^{-1/2})x\right)
\]
where $[n]_q!$ is the $q$-factorial, and the (infinite) $q$-Pochhammer symbol defined as
\[
(x;q)_\infty:=\prod_{n=0}^\infty(1-q^nx)=\sum_{n=0}^\infty\frac{(-1)^nq^{n(n-1)/2}x^n}{(1-q)(1-q^2)\cdots (1-q^n)}=\mathbf E\left(-q^{-1/2}x\right)^{-1}.
\]
Various permutations of these naming conventions can be found in the literature.

For $f_n\in\ZZ[t^{\pm}]$, $n\geq 1$, a sequence on Laurent polynomials with integer coefficients define
\begin{equation}
\label{sym_formula}
\mathrm{Sym}\left(\sum_{n\geq 1}\frac{f_n(q^{1/2})x^n}{q^{1/2}-q^{-1/2}}\right):=\exp\left(\sum_{n\geq 1}\sum_{k\mid n}\frac{(-1)^{k-1}f_{n/k}(-(-q^{1/2})^k)x^n}{k(q^{k/2}-q^{-k/2})}\right).
\end{equation}
This has a more conceptual definition (and generalization) in the language of (complete) $\lambda$-rings where 
\[
\mathrm{Sym}(x)=1-\lambda^1(-x)+\lambda^2(-x)-\lambda^3(-x)\pm\ldots
\]
and \eqref{sym_formula} is the special case of this for the $\lambda$-ring $\ZZ((-q^{1/2}))[[x]]$ where $-q^{1/2}$ and $x$ are adjoined as line elements, i.e. $\lambda^n(-q^{1/2})=\lambda^n(x)=0$ for $n>1$.
The relation of \eqref{sym_formula} to the quantum-dilogarithm is
\[
\mathrm{Sym}\left(\sum_{n\geq 1}\frac{f_n(q^{1/2})x^n}{q^{1/2}-q^{-1/2}}\right)=\prod_{n=1}^\infty\prod_{k\in\ZZ}\mathbf E\left((-q^{1/2})^kx^n\right)^{(-1)^kc_{n,k}}
\]
where $f(t)=\sum_{k\in\ZZ}c_{n,k}t^k$, $c_{n,k}\in\ZZ$.

\begin{lemma}
\label{lem_w33dt}
\begin{equation}
\sum_{n=0}^{\infty}\left(\sum_{k=0}^n\binom{n}{k}_q\right)^2\frac{q^{n^2/2}x^n}{\chi_q(GL_n)}=\mathrm{Sym}\left(\frac{4x+q^{1/2}x^2}{q^{1/2}-q^{-1/2}}\right)=\mathbf E(x)^4\mathbf E(-q^{1/2}x^2)^{-1}
\end{equation}
\end{lemma}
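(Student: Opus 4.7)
The plan is to verify the two equalities in turn.

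The second equality is immediate from the formula for $\mathrm{Sym}$ stated just before the lemma: taking $f_1(t)=4$ (so $c_{1,0}=4$) and $f_2(t)=t$ (so $c_{2,1}=1$) produces four factors of $\mathbf E(x)$ and one inverse factor $\mathbf E(-q^{1/2}x^2)^{-1}$, which is exactly the right-hand side.

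For the first equality, I would translate everything into $q$-Pochhammer notation by the change of variable $y:=-q^{1/2}x$. Using $\chi_q(GL_n)=q^{n(n-1)/2}\prod_{i=1}^n(q^i-1)=(-1)^n q^{n(n-1)/2}(q;q)_n$ one finds $q^{n^2/2}x^n/\chi_q(GL_n)=y^n/(q;q)_n$, while Euler's identity $\sum_n z^n/(q;q)_n=1/(z;q)_\infty$ applied to the definition of $\mathbf E$ gives $\mathbf E(x)=1/(y;q)_\infty$ and $\mathbf E(-q^{1/2}x^2)^{-1}=(qx^2;q)_\infty=(y^2;q)_\infty$. The identity thus reduces to
\[
\sum_{n\geq 0}\frac{B_n^2\,y^n}{(q;q)_n}=\frac{(y^2;q)_\infty}{(y;q)_\infty^4}.
\]

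To prove this reduced identity I would show that both sides satisfy the same first-order $q$-difference equation and agree at $y=0$. Using the elementary shifts $(qy;q)_\infty=(y;q)_\infty/(1-y)$ and $(q^2y^2;q)_\infty=(y^2;q)_\infty/((1-y^2)(1-qy^2))$, direct computation gives that $F(y):=(y^2;q)_\infty/(y;q)_\infty^4$ satisfies
\[
(1+y)(1-qy^2)F(qy)=(1-y)^3 F(y).
\]
Extracting the coefficient of $y^n$ on both sides yields the three-term recurrence $(1-q^n)a_n=(3+q^{n-1})a_{n-1}-(3+q^{n-1})a_{n-2}+(1-q^{n-2})a_{n-3}$ for $a_n:=[y^n]F(y)$, which together with $a_0=1$ determines $F$ uniquely.

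Finally I would check that $a_n=B_n^2/(q;q)_n$ satisfies the same recurrence. The Galois numbers $B_n$ satisfy the two-term recurrence $B_n=2B_{n-1}+(q^{n-1}-1)B_{n-2}$, obtained by switching the order of summation to show $\sum_n B_n z^n/(q;q)_n=1/(z;q)_\infty^2$ and then extracting the coefficient of $z^n$ from the relation $(1-z)^2/(z;q)_\infty^2=1/(qz;q)_\infty^2$. Squaring this two-term recurrence at index $n$ and using its analogue at index $n-1$ to eliminate the cross product $B_{n-1}B_{n-2}$ in favor of $B_{n-1}^2$, $B_{n-2}^2$, and $B_{n-3}^2$ then yields exactly the three-term recurrence displayed above, completing the proof. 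The main piece of technical bookkeeping is this elimination step, but it reduces to a straightforward algebraic calculation; the only care needed is that the division by $q^{n-2}-1$ fails at $n=2$, so the base cases $n=1,2$ should be verified by hand.
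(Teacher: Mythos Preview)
Your proof is correct but takes a genuinely different route from the paper's argument. The paper expands $\mathbf E(x)^4$ via $q$-multinomial coefficients, multiplies in the series for $\mathbf E(-q^{1/2}x^2)^{-1}$, and reduces the identity to
\[
B_n^2=\sum_{i_1+\ldots+i_4+2j=n}\chi_q(GL_j)\binom{n}{i_1,\ldots,i_4,j,j}_q,
\]
which it then proves bijectively over $\mathbb F_q$: the left side counts pairs of subspaces $(A,B)$ of $\mathbb F_q^n$, and each such pair is encoded by the flag $0\subset A\cap B\subset A\subset A+B\subset\mathbb F_q^n$ together with a complement of $A$ in $A+B$, which after refining by the image/kernel of the associated linear map gives exactly the length-six flag with two identified subquotients counted on the right. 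Your approach is analytic rather than combinatorial: you pass to Pochhammer variables, derive a first-order $q$-difference equation for the product side, extract a third-order recurrence, and verify it for $B_n^2/(q;q)_n$ by squaring the Galois-number recurrence $B_n=2B_{n-1}+(q^{n-1}-1)B_{n-2}$ and eliminating the cross term via $(q^{n-2}-1)^2B_{n-3}^2=(B_{n-1}-2B_{n-2})^2$. This is entirely valid (note that with this elimination no actual division by $q^{n-2}-1$ is needed, so the recurrence holds for all $n\geq 3$ and the base cases $n=0,1,2$ are immediate). The paper's bijection explains \emph{why} the identity holds in terms of linear algebra, whereas your $q$-difference approach is more mechanical but has the advantage of being routine and easily adaptable to similar product identities.
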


\begin{proof}
In terms of $q$-multinomial coefficients (counting flags of given type) we have
\[
E(x)^4=\sum_{n=0}^\infty\left(\frac{q^{n^2/2}}{\chi_q(GL_n)}\sum_{i_1+i_2+i_3+i_4=n}\binom{n}{i_1,i_2,i_3,i_4}_q\right)x^n.
\]
Furthermore,
\[
E\left(-q^{1/2}x^2\right)^{-1}=\sum_{n=0}^\infty\frac{q^{n^2}}{\chi_q(GL_n)}x^{2n}
\]
from its interpretation as counting representations of the one-loop quiver.
Multiplying the two and applying 
\begin{equation}\label{qbinom_gl}
q^{k(n-k)}\chi_q(GL_k)\chi_q(GL_{n-k})\binom{n}{k}_q=\chi_q(GL_n)
\end{equation}
we get
\begin{align*}
E(x)^4E\left(-q^{1/2}x^2\right)^{-1}&= \\
\sum_{n=0}^{\infty}\frac{q^{n^2/2}}{\chi_q(GL_n)}& \sum_{i_1+\ldots+i_4+2j=n}\chi_q(GL_j)\binom{n}{i_1,\ldots,i_4,j,j}_qx^n.
\end{align*}
Thus it remains to show the equality
\begin{equation}\label{twosubspace}
A_n^2=\sum_{i_1+\ldots+i_4+2j=n}\chi_q(GL_j)\binom{n}{i_1,\ldots,i_4,j,j}_q
\end{equation}
of polynomials in $q$.
It suffices to give a combinatorial proof assuming $q$ is a prime power.

The left-hand side of \eqref{twosubspace} counts pairs of subspaces in $\mathbb F_q^n$.
A pair of subspaces $A,B\subset \mathbb F_q^n$ is equivalently determined by the flag
\[
0\subset A\cap B\subset A\subset A+B\subset F_q^n
\]
together with a complement of $A\subset A+B$.
The number of such complements is the same as the number of linear maps $f:(A+B)/A\to A/(A\cap B)$.
Such a linear map provides a refined flag of length six
\[
0\subset A\cap B \subset \mathrm{Im}(f)\subset A \subset \mathrm{Ker}(f) \subset A+B\subset F_q^n
\]
with two of the subquotients, $(A+B)/\mathrm{Ker}(f)$ and $\mathrm{Im}(f)/(A\cap B)$, naturally identified.
Note however that this --- flags of length six with two of the subquotients identified --- is precisely what the right-hand side of \eqref{twosubspace} counts.
Hence \eqref{twosubspace} and the lemma are proven.
\end{proof}

\bibliographystyle{alpha}
\bibliography{surf_cy3}

\Addresses

\end{document}